\newcommand{\email}[1]{\href{mailto:#1}{#1}}
\def\thm@space@setup{%
  \thm@preskip=\parskip \thm@postskip=0pt
}
\newcommand{\UT}[1][k]{\underline{\mathsf{U}}_T^{#1}}
\newcommand{\Uh}[1][]{\underline{\mathsf{U}}_{h#1}^k}
\newcommand{\UhD}[1][0]{\underline{\mathsf{U}}_{h,#1}^k}
\newcommand{\su}[1][T]{\underline{\mathsf{u}}_{#1}}
\newcommand{\sv}[1][T]{\underline{\mathsf{v}}_{#1}}
\newcommand{\se}[1][T]{\underline{\mathsf{e}}_{#1}}
\newcommand{\unv}[1][]{\mathsf{v}_{#1}}
\newcommand{\bphi}{\vec{\phi}}
\newcommand{\shu}[1][]{\underline{\widehat{\mathsf{u}}}_{#1}}
\newcommand{\pT}[1][k+1]{p_T^{#1}}
\newcommand{\dTF}[1][k]{\delta_{TF}^{#1}}
\newcommand{\ph}[1][k+1]{p_h^{#1}}
\newcommand{\proj}[2][]{\Pi_{#2}^{#1}}
\newcommand{\lproj}[2][h]{\pi_{#1}^{0,#2}}
\newcommand{\eproj}[2][U]{\pi_{#1}^{1,#2}}
\newcommand{\IT}[1][k]{\underline{\mathsf{I}}_T^{#1}}
\newcommand{\Ih}[1][k]{\underline{\mathsf{I}}_h^{#1}}
\newcommand{\GT}[1][k]{\vec{G}^{#1}_{T}}
\newcommand{\Gh}[1][k]{\vec{G}^{#1}_{h}}
\def\bfa{{\mathbf{a}}}
\def\upa{\beta_\bfa}
\def\coera{\lambda_\bfa}
\def\lipa{\gamma_\bfa}
\def\mona{\zeta_\bfa}
\def\div{\mathop{\rm div}}
\def\dx{\,\mathrm{d}\vec{x}}
\newcommand{\asch}{\mathrm{A}_h}
\newcommand{\ascT}{\mathrm{A}_T}
\newcommand{\Eh}{\mathrm{E}_h}
\newcommand{\cE}[1][h]{\mathcal{E}_{#1}}
\newcommand{\argmin}{\operatornamewithlimits{arg\,min}}
\newcommand{\logLogSlopeTriangle}[5]
{
    % #1. Relative offset in x direction.
    % #2. Width in x direction, so xA-xB.
    % #3. Relative offset in y direction.
    % #4. Slope d(y)/d(log10(x)).
    % #5. Plot options.

    \pgfplotsextra
    {
        \pgfkeysgetvalue{/pgfplots/xmin}{\xmin}
        \pgfkeysgetvalue{/pgfplots/xmax}{\xmax}
        \pgfkeysgetvalue{/pgfplots/ymin}{\ymin}
        \pgfkeysgetvalue{/pgfplots/ymax}{\ymax}

        % Calculate auxilliary quantities, in relative sense.
        \pgfmathsetmacro{\xArel}{#1}
        \pgfmathsetmacro{\yArel}{#3}
        \pgfmathsetmacro{\xBrel}{#1-#2}
        \pgfmathsetmacro{\yBrel}{\yArel}
        \pgfmathsetmacro{\xCrel}{\xArel}
        %\pgfmathsetmacro{\yCrel}{ln(\yC/exp(\ymin))/ln(exp(\ymax)/exp(\ymin))} % REPLACE THIS EXPRESSION WITH AN EXPRESSION INDEPENDENT OF \yC TO PREVENT THE 'DIMENSION TOO LARGE' ERROR.

        \pgfmathsetmacro{\lnxB}{\xmin*(1-(#1-#2))+\xmax*(#1-#2)} % in [xmin,xmax].
        \pgfmathsetmacro{\lnxA}{\xmin*(1-#1)+\xmax*#1} % in [xmin,xmax].
        \pgfmathsetmacro{\lnyA}{\ymin*(1-#3)+\ymax*#3} % in [ymin,ymax].
        \pgfmathsetmacro{\lnyC}{\lnyA+#4*(\lnxA-\lnxB)}
        \pgfmathsetmacro{\yCrel}{\lnyC-\ymin)/(\ymax-\ymin)} % THE IMPROVED EXPRESSION WITHOUT 'DIMENSION TOO LARGE' ERROR.

        % Define coordinates for \draw. MIND THE 'rel axis cs' as opposed to the 'axis cs'.
        \coordinate (A) at (rel axis cs:\xArel,\yArel);
        \coordinate (B) at (rel axis cs:\xBrel,\yBrel);
        \coordinate (C) at (rel axis cs:\xCrel,\yCrel);

        % Draw slope triangle.
        \draw[#5]   (A)-- node[pos=0.5,anchor=north] {\scriptsize{1}}
                    (B)-- 
                    (C)-- node[pos=0.,anchor=west] {\scriptsize{#4}} %% node[pos=0.5,anchor=west] {#4}
                    cycle;
    }
}
\title{$W^{s,p}$-approximation properties of elliptic projectors on polynomial spaces, with application to the error analysis of a Hybrid High-Order discretisation of Leray--Lions problems\footnote{This work was partially supported by ANR project HHOMM (ANR-15-CE40-0005)}}
\author[1]{Daniele A. Di Pietro\footnote{\email{daniele.di-pietro@umontpellier.fr}}}
\affil[1]{University of Montpellier, Institut Montpelli\'{e}rain Alexander Grothendieck, 34095 Montpellier, France}
\author[2]{J\'{e}r\^{o}me Droniou\footnote{\email{jerome.droniou@monash.edu}}}
\affil[2]{School of Mathematical Sciences, Monash University, Clayton, Victoria 3800, Australia}
\begin{document}

\maketitle

\begin{abstract}
  In this work we prove optimal $W^{s,p}$-approximation estimates (with $p\in[1,+\infty]$) for elliptic projectors on local polynomial spaces.
  The proof hinges on the classical Dupont--Scott approximation theory together with two novel abstract lemmas:
  An approximation result for bounded projectors, and an $L^p$-boundedness result for $L^2$-orthogonal projectors on polynomial subspaces.
  The $W^{s,p}$-approximation results have general applicability to (standard or polytopal) numerical methods based on local polynomial spaces.
  As an illustration, we use these $W^{s,p}$-estimates to derive novel error estimates for a Hybrid High-Order discretization of Leray--Lions elliptic problems whose weak formulation is classically set in $W^{1,p}(\Omega)$ for some $p\in(1,+\infty)$.
  This kind of problems appears, e.g., in the modelling of glacier motion, of incompressible turbulent flows, and in airfoil design.
  Denoting by $h$ the meshsize, we prove that the approximation error measured in a $W^{1,p}$-like discrete norm scales as $h^{\frac{k+1}{p-1}}$ when $p\ge 2$ and as $h^{(k+1)(p-1)}$ when $p<2$.

  \medskip
  \noindent{\it 2010 Mathematics Subject Classification:} 65N08, 65N30, 65N12

  \smallskip
  \noindent{\it Keywords:}
  $W^{s,p}$-approximation properties of elliptic projector on polynomials, Hybrid High-Order methods, nonlinear elliptic equations, $p$-Laplacian, error estimates
\end{abstract}
%
%------------------------------------------------------------------------------%
%
\section{Introduction}

In this work we prove optimal $W^{s,p}$-approximation properties for elliptic projectors on local polynomial spaces, and use these results to derive novel a priori error estimates for a Hybrid High-Order (HHO) discretisation of Leray--Lions elliptic equations.

Let $U\subset\Real^d$, $d\ge 1$, be an open bounded connected set of diameter $h_U$.
For all integers $s\in\Natural$ and all reals $p\in[1,+\infty]$, we denote by $W^{s,p}(U)$ the space of functions having derivatives up to degree $s$ in $L^p(U)$ with associated seminorm
\begin{equation}\label{eq:Wsp.norm}
  \seminorm[W^{s,p}(U)]{v}\eqbydef\sum_{\vec{\alpha}\in\Natural^d,\norm[1]{\vec{\alpha}}=s}\norm[L^p(U)]{\partial^{\vec{\alpha}}v},
\end{equation}
where $\norm[1]{\vec{\alpha}}\eqbydef\alpha_1+\ldots+\alpha_d$ and $\partial^{\vec{\alpha}}=\partial_1^{\alpha_1}\ldots\partial_d^{\alpha_d}$ (this choice for the seminorm enables a seamless treatment of the case $p=+\infty$).

Let a polynomial degree $l\ge 0$ be fixed, and denote by $\Poly{l}(U)$ the space of $d$-variate polynomials on $U$.
The elliptic projector $\eproj{l}:W^{1,1}(U)\to\Poly{l}(U)$ maps a generic function $v\in W^{1,1}(U)$ on the unique polynomial $\eproj{l}v\in\Poly{l}(U)$ obtained in the following way: We start by imposing
\begin{subequations}\label{eq:eproj}
\begin{equation}\label{eq:eproj:1}
\int_U\GRAD\eproj{l}v\SCAL\GRAD w=\int_U\GRAD v\SCAL\GRAD w\qquad\forall w\in\Poly{l}(U).
\end{equation}
By the Riesz representation theorem in $\GRAD\Poly{l}(U)$ for the $L^2(U)^d$-inner product,
this relation defines a unique element $\GRAD\eproj{l}v$, and thus a polynomial
$\eproj{l}v$ up to an additive constant. This constant is then fixed by writing
\begin{equation}\label{eq:eproj:2}
\int_U(\eproj{l} v-v)=0.
\end{equation}
\end{subequations}

We have the following characterisation:
$$
\eproj{l} v = \argmin_{w\in\Poly{l}(U),\,\int_U(w-v)=0}\norm[L^2(U)^d]{\GRAD(w-v)}^2.
$$
The first main result of this work is summarised in the following theorem.

\begin{theorem}[$W^{s,p}$-approximation for $\eproj{l}$]\label{thm:Wsp.approx}
  Assume that $U$ is star-shaped with respect to every point in a ball of radius $\varrho h_U$ for some $\varrho>0$.
  Let $s\in\{1,\ldots,l+1\}$ and $p\in[1,+\infty]$.
  Then, there exists a real number $C>0$ depending only on $d$, $\varrho$, $l$, $s$, and $p$ such that, for all $m\in\{0,\ldots,s\}$ and all $v\in W^{s,p}(U)$,
  \begin{equation}\label{eq:Wsp.approx}
    \seminorm[W^{m,p}(U)]{v-\eproj{l}v}\le C h_U^{s-m}\seminorm[W^{s,p}(U)]{v}.
  \end{equation}
\end{theorem}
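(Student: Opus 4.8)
The plan is to reduce the estimate to the classical Dupont--Scott approximation theory, exploiting crucially that $\eproj{l}$ is a \emph{projector} onto $\Poly{l}(U)$: for $q\in\Poly{l}(U)$, choosing $w=q$ in \eqref{eq:eproj:1} gives $\GRAD\eproj{l}q=\GRAD q$, and then \eqref{eq:eproj:2} forces $\eproj{l}q=q$, so $\eproj{l}\circ\eproj{l}=\eproj{l}$. Let $\pi v\in\Poly{l}(U)$ denote the averaged Taylor polynomial of $v$ of degree $\le s-1\le l$, well defined thanks to the star-shapedness assumption; Dupont--Scott yields
\begin{equation*}
  \seminorm[W^{j,p}(U)]{v-\pi v}\le C\,h_U^{s-j}\seminorm[W^{s,p}(U)]{v}\qquad\text{for all }j\in\{0,\ldots,s\},
\end{equation*}
with $C$ depending only on $d,l,s,p,\varrho$. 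Writing $z\eqbydef v-\pi v$ and using linearity together with $\eproj{l}\pi v=\pi v$, we get $v-\eproj{l}v=z-\eproj{l}z$, whence
\begin{equation*}
  \seminorm[W^{m,p}(U)]{v-\eproj{l}v}\le\seminorm[W^{m,p}(U)]{z}+\seminorm[W^{m,p}(U)]{\eproj{l}z}.
\end{equation*}
The first term is already bounded by $C h_U^{s-m}\seminorm[W^{s,p}(U)]{v}$, so it remains to control $\seminorm[W^{m,p}(U)]{\eproj{l}z}$ by the same quantity; this splitting is the content of the abstract ``approximation for bounded projectors'' lemma announced in the introduction.

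For the remaining term I would exploit that $\eproj{l}z\in\Poly{l}(U)$ and invoke two estimates classical on domains star-shaped with respect to a ball (with constants depending only on $d,l,p,\varrho$): the inverse inequality $\seminorm[W^{m,p}(U)]{q}\le C h_U^{1-m}\seminorm[W^{1,p}(U)]{q}$ for $q\in\Poly{l}(U)$ and $m\ge1$, and a Poincar\'e--Wirtinger inequality. Using also that $\seminorm[W^{1,p}(U)]{\eproj{l}z}$ is comparable to $\norm[L^p(U)^d]{\GRAD\eproj{l}z}$ and that \eqref{eq:eproj:2} gives $|U|^{-1}\int_U\eproj{l}z=|U|^{-1}\int_U z$ (so the mean of $\eproj{l}z$ is controlled by $\norm[L^p(U)]{z}$ via H\"older), one obtains $\seminorm[W^{m,p}(U)]{\eproj{l}z}\le C h_U^{1-m}\norm[L^p(U)^d]{\GRAD\eproj{l}z}$ for $m\ge1$ and $\norm[L^p(U)]{\eproj{l}z}\le C\big(h_U\norm[L^p(U)^d]{\GRAD\eproj{l}z}+\norm[L^p(U)]{z}\big)$. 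The key observation is then that, by \eqref{eq:eproj:1}, $\GRAD\eproj{l}z$ is the $L^2(U)^d$-orthogonal projection of $\GRAD z$ onto the polynomial subspace $\GRAD\Poly{l}(U)$; invoking the $L^p$-boundedness of $L^2$-orthogonal projections onto polynomial subspaces (the second novel ingredient), $\norm[L^p(U)^d]{\GRAD\eproj{l}z}\le C\norm[L^p(U)^d]{\GRAD z}\le C\seminorm[W^{1,p}(U)]{z}$. Applying Dupont--Scott to $z=v-\pi v$ at $j=1$ (and at $j=0$ for the extra term when $m=0$) gives $\seminorm[W^{m,p}(U)]{\eproj{l}z}\le C h_U^{1-m}h_U^{s-1}\seminorm[W^{s,p}(U)]{v}=C h_U^{s-m}\seminorm[W^{s,p}(U)]{v}$, which finishes the proof.

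I expect the main obstacle to be this uniform $L^p$-boundedness of the $L^2$-orthogonal projection, the difficulty being that the constant must not depend on $U$ within the (non-compact) admissible class. I would base it on a shape-uniform equivalence of Lebesgue norms for polynomials of degree $\le l$ on $U$, namely $\norm[L^{q_1}(U)]{r}\le C\,h_U^{d(1/q_1-1/q_2)}\norm[L^{q_2}(U)]{r}$ with $C=C(d,l,q_1,q_2,\varrho)$, which follows from the inclusions $B(c,\varrho h_U)\subseteq U\subseteq B(c,h_U)$ (with $c$ the centre of the inner ball and $U$ of diameter $h_U$), a scaling argument, and classical bounds on the growth of polynomials of bounded degree between concentric balls --- the same toolbox, via Markov's inequality, also delivering the inverse and Poincar\'e inequalities used above. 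Granting this, one fixes an $L^2(U)^d$-orthonormal basis $(e_i)_i$ of the projection subspace (of dimension bounded in terms of $d$ and $l$ only), writes the $L^2$-projection of $g\in L^p(U)^d$ as $\sum_i\big(\int_U g\SCAL e_i\big)e_i$, and estimates its $L^p(U)^d$-norm by $\sum_i\norm[L^p(U)^d]{g}\,\norm[L^{p'}(U)^d]{e_i}\,\norm[L^p(U)^d]{e_i}$, where H\"older's inequality and the norm equivalence (with $q_2=2$) give $\norm[L^{p'}(U)^d]{e_i}\,\norm[L^p(U)^d]{e_i}\le C h_U^{d(1/p'-1/2)}h_U^{d(1/p-1/2)}=C$. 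This argument is uniform over $p\in[1,+\infty]$ (alternatively, the range $p<2$ can be recovered by duality from the case $p'>2$ using the self-adjointness of the projection). Assembling the three ingredients --- Dupont--Scott, the inverse/Poincar\'e inequalities, and this $L^p$-bound --- yields \eqref{eq:Wsp.approx}.
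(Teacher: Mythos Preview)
Your proposal is correct and follows essentially the same route as the paper: the decomposition via the averaged Taylor polynomial and the projector property, the reduction of $\seminorm[W^{m,p}(U)]{\eproj{l}z}$ (for $m\ge1$) to $\seminorm[W^{1,p}(U)]{\eproj{l}z}$ via inverse (``reverse Sobolev'') inequalities on polynomials, the observation that $\GRAD\eproj{l}z$ is the $L^2$-orthogonal projection of $\GRAD z$ onto $\GRAD\Poly{l}(U)$, and the Poincar\'e--Wirtinger/mean-value treatment of $m=0$ all match the paper's argument (its abstract Lemma applied with $q=1$). The only notable difference is in how you prove the $L^p$-boundedness of the $L^2$-projection onto the polynomial subspace: your orthonormal-basis argument, combined with the two-sided shape-uniform Lebesgue norm equivalence for polynomials, gives a direct bound valid for all $p\in[1,+\infty]$ at once, whereas the paper handles $p>2$ by reverse Lebesgue embeddings plus H\"older and then obtains $p<2$ by the duality you mention as an alternative; both approaches rest on the same reverse Lebesgue inequality for polynomials and yield the same dependence of the constant on $d$, $l$, $p$, and $\varrho$.
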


The proof of Theorem \ref{thm:Wsp.approx}, given in Section~\ref{sec:Wsp.approx}, is based on the classical Dupont--Scott approximation theory\cite{Dupont.Scott:80} (cf.~also Chapter 4 in Ref.~\cite{Brenner.Scott:08}) and hinges on two novel abstract lemmas for projectors on polynomial spaces: A  $W^{s,p}$-approximation result for projectors that satisfy a suitable boundedness property, and an $L^p$-boundedness result for $L^2$-orthogonal projectors on polynomial subspaces.
Both results make use of the reverse Lebesgue and Sobolev embeddings for polynomial functions proved in Ref.~\cite{Di-Pietro.Droniou:16} (cf., in particular, Lemma 5.1 and Remark A.2 therein).
Following similar arguments as in Section 7 of Ref.~\cite{Dupont.Scott:80}, the results of Theorem \ref{thm:Wsp.approx} still hold if $U$ is a finite union of domains that are star-shaped with respect to balls of radius comparable to $h_U$.

The second main result concerns the approximation of traces, and therefore requires more assumptions on the domain $U$.

\begin{theorem}[$W^{s,p}$-approximation of traces for $\eproj{l}$]\label{thm:Wsp.approx.trace}
  Assume that $U$ is a polytope which admits a partition $\mathcal{S}_U$ into disjoint simplices $S$ of diameter $h_S$ and inradius $r_S$, and that there exists a real number $\varrho>0$ such that, for all $S\in\mathcal{S}_U$,
  $$
  \varrho^2 h_U\le \varrho h_S\le r_S.
  $$
  Let $s\in\{1,\ldots,l+1\}$, $p\in[1,+\infty]$, and denote by $\Fh[U]$ the set of hyperplanar faces of $U$.
  Then, there exists a real number $C$ depending only on $d$, $\varrho$, $l$, $s$ and $p$ such that, for all $m\in \{0,\ldots,s-1\}$ and all $v\in W^{s,p}(U)$,
  \begin{equation}\label{eq:Wsp.approx.trace}
    h_U^{\frac1p}\seminorm[{W^{m,p}(\Fh[U])}]{v-\eproj[U]{l}v}\le Ch_U^{s-m}\seminorm[W^{s,p}(U)]{v}.
  \end{equation}
  Here, $W^{m,p}(\Fh[U])$ denotes the set of functions that belong to $W^{m,p}(F)$ for all $F\in\Fh[U]$, and $\seminorm[{W^{m,p}(\Fh[U])}]{{\cdot}}$ the corresponding broken seminorm.
\end{theorem}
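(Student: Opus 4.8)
The plan is to reduce the trace estimate \eqref{eq:Wsp.approx.trace} to the volumetric estimate \eqref{eq:Wsp.approx} of Theorem~\ref{thm:Wsp.approx} by means of a continuous trace inequality applied on each simplex $S$ of the partition $\mathcal{S}_U$. First I would fix a face $F\in\Fh[U]$ and note that $F$ can itself be partitioned by the faces of those simplices $S\in\mathcal{S}_U$ that touch $\partial U$ along $F$; it therefore suffices to bound $\seminorm[W^{m,p}(F_S)]{v-\eproj[U]{l}v}$ where $F_S$ is a face of such a simplex $S$ contained in $F$. For a fixed $S$, the key tool is the standard \emph{multiplicative trace inequality} on $S$: for any sufficiently smooth $w$,
\begin{equation*}
  \norm[L^p(\partial S)]{w}^p \lesssim h_S^{-1}\norm[L^p(S)]{w}^p + \norm[L^p(S)]{w}^{p-1}\norm[L^p(S)]{\GRAD w},
\end{equation*}
or, in the scaled form that is cleaner to use here,
\begin{equation*}
  h_S^{\frac1p}\norm[L^p(\partial S)]{w}\lesssim \norm[L^p(S)]{w} + h_S\seminorm[W^{1,p}(S)]{w},
\end{equation*}
with a constant depending only on $d$, $p$, and the shape-regularity parameter controlling $h_S/r_S$; this is exactly where the assumption $\varrho^2 h_U\le\varrho h_S\le r_S$ enters, both to make the trace constant uniform and to allow the replacement of powers of $h_S$ by powers of $h_U$ (since $h_S\simeq h_U$ up to $\varrho$).

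Next I would apply this inequality to $w=\partial^{\vec\alpha}(v-\eproj[U]{l}v)$ for each multi-index $\vec\alpha$ with $\norm[1]{\vec\alpha}=m\le s-1$. Summing over such $\vec\alpha$ gives
\begin{equation*}
  h_S^{\frac1p}\seminorm[W^{m,p}(F_S)]{v-\eproj[U]{l}v}\lesssim \seminorm[W^{m,p}(S)]{v-\eproj[U]{l}v} + h_S\seminorm[W^{m+1,p}(S)]{v-\eproj[U]{l}v}.
\end{equation*}
Since $m\le s-1$ we have $m+1\le s$, so both seminorms on the right are among those controlled by Theorem~\ref{thm:Wsp.approx} applied on $U$ (note $\eproj[U]{l}v$ is the projector on the \emph{whole} $U$, not on $S$): indeed \eqref{eq:Wsp.approx} with the pair $(m,U)$ and with $(m+1,U)$ yields $\seminorm[W^{m,p}(U)]{v-\eproj[U]{l}v}\le C h_U^{s-m}\seminorm[W^{s,p}(U)]{v}$ and $\seminorm[W^{m+1,p}(U)]{v-\eproj[U]{l}v}\le C h_U^{s-m-1}\seminorm[W^{s,p}(U)]{v}$. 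Bounding the $S$-seminorms by the corresponding $U$-seminorms and using $h_S\simeq h_U$, the right-hand side becomes $\lesssim h_U^{s-m}\seminorm[W^{s,p}(U)]{v} + h_U\cdot h_U^{s-m-1}\seminorm[W^{s,p}(U)]{v}\simeq h_U^{s-m}\seminorm[W^{s,p}(U)]{v}$. Dividing through by $h_S^{1/p}\simeq h_U^{1/p}$ and then multiplying by $h_U^{1/p}$ recovers precisely the scaling $h_U^{1/p}\seminorm[W^{m,p}(F_S)]{v-\eproj[U]{l}v}\lesssim h_U^{s-m}\seminorm[W^{s,p}(U)]{v}$.

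Finally I would sum the $p$-th powers of these local face estimates over all faces $F_S\subset F$ of simplices in $\mathcal{S}_U$ and over all $F\in\Fh[U]$; because the simplices are disjoint, the right-hand side sums (as a broken $W^{s,p}(U)$-seminorm, i.e.\ bounded by $\seminorm[W^{s,p}(U)]{v}^p$ times a constant depending only on the number of faces per simplex, hence on $d$), and taking $p$-th roots gives \eqref{eq:Wsp.approx.trace}. The case $p=+\infty$ is handled the same way with suprema replacing sums. The only genuinely delicate point is the uniformity of the constant in the trace inequality and in the comparison $h_S\simeq h_U$: this is precisely what the two-sided bound $\varrho^2 h_U\le\varrho h_S\le r_S$ is designed to guarantee, so the main obstacle is really just bookkeeping of how the constants depend on $\varrho$, together with the (classical but slightly technical) scaled trace inequality on a shape-regular simplex, which can be obtained by mapping to a reference simplex or by a direct divergence-theorem argument.
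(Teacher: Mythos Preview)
Your approach is essentially the same as the paper's: apply a scaled trace inequality to $w=\partial^{\vec\alpha}(v-\eproj[U]{l}v)$ for $|\vec\alpha|=m$, then invoke Theorem~\ref{thm:Wsp.approx} at orders $m$ and $m+1$. The only difference is packaging: the paper applies the trace inequality \eqref{ineq.cont.trace} directly on the whole polytope $U$ (citing it from an earlier reference), whereas you unpack it simplex by simplex and then reassemble---which is precisely how \eqref{ineq.cont.trace} is proved in that reference.

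One small point worth tightening: in your final paragraph you first bound the $S$-seminorms by the $U$-seminorms and apply Theorem~\ref{thm:Wsp.approx}, obtaining a right-hand side that no longer depends on $S$, and \emph{then} say the right-hand side ``sums as a broken $W^{s,p}(U)$-seminorm''. After the global bound, there is nothing local left to sum; what you are really using is that the number of simplicial faces is bounded in terms of $\varrho$ and $d$ (by a volume-comparison argument, since $r_S\ge\varrho^2 h_U$). That works, but the cleaner route---and the one your parenthetical actually suggests---is to sum the $p$-th powers of the simplex-level inequalities \emph{before} invoking Theorem~\ref{thm:Wsp.approx}, so that the right-hand side genuinely telescopes into $\seminorm[W^{m,p}(U)]{v-\eproj[U]{l}v}^p+h_U^p\seminorm[W^{m+1,p}(U)]{v-\eproj[U]{l}v}^p$ (the disjointness of the $S$ and the bound $d{+}1$ on faces per simplex handle the bookkeeping). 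Either way the conclusion holds.
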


The proof of Theorem \ref{thm:Wsp.approx.trace}, given in Section~\ref{sec:Wsp.approx.trace}, is obtained combining the results of Theorem \ref{thm:Wsp.approx} with a continuous $L^p$-trace inequality.

The approximation results of Theorems \ref{thm:Wsp.approx} and \ref{thm:Wsp.approx.trace} are used to prove novel error estimates for the HHO method of Ref.~\cite{Di-Pietro.Droniou:16} for nonlinear Leray--Lions elliptic problems of the form: Find a potential $u:\Omega\to\Real$ such that
\begin{equation}\label{pde:plap}
  \begin{alignedat}{2}
    -\div(\bfa(\vec{x},\GRAD u))&=f&\qquad&\text{in $\Omega$},
    \\
    u&=0 &\qquad&\text{on $\partial\Omega$},
  \end{alignedat}
\end{equation}
where $\Omega$ is a bounded polytopal subset of $\Real^d$ with boundary $\partial\Omega$, while the source term $f:\Omega\to\Real$ and the function $\bfa:\Omega\times\Real^d\to\Real^d$ satisfy the requirements detailed in Eq. \eqref{assum:gen} below.
Throughout the paper, it is assumed that $\Omega$ does not have cracks, that is,
$\Omega$ lies on one side of its boundary. The family of problems~\eqref{pde:plap},
which contains the $p$-Laplace equation as a special case (cf. \eqref{choice:plap} below), appears in the modelling of glacier motion\cite{Glowinski.Rappaz:03}, of incompressible turbulent flows in porous media\cite{Diaz.Thelin:94}, and in airfoil design\cite{Glowinski:84}.

In the context of conforming Finite Element (FE) approximations of problems which can be traced back to the general form \eqref{pde:plap}, a priori error estimates were derived in Ref.~\cite{Barrett.Liu:94,Glowinski.Rappaz:03}. For nonconforming (Crouzeix--Raviart) FE approximations, error estimates are proved in Ref.~\cite{Liu.Yan:01}, with convergence rates consistent with the ones presented in this work (concerning the link between the HHO method and nonconforming FE, cf. Remark 1 in Ref.~\cite{Di-Pietro.Ern.ea:16} and also Ref.~\cite{Boffi.Di-Pietro:16}).
Error estimates for a nodal Mimetic Finite Difference (MFD) method for a particular kind of operator $\bfa$ and with $p=2$ are proved in Ref.~\cite{Antonietti.Bigoni.ea:14}.
Finite volume methods, on the other hand, are considered in Ref.~\cite{Andreianov.Boyer.ea:06}, where error estimates similar to the ones obtained here are derived under the assumption that the source term $f$ vanishes on the boundary (additional error terms are present when this is not the case).
Finally, we also cite here Ref.~\cite{Droniou:06}, where the convergence study of a Mixed Finite Volume (MFV) scheme inspired by Ref.~\cite{Droniou.Eymard:06} is carried out using a compactness argument under minimal regularity assumptions on the exact solution.

The HHO method analysed here is based on meshes composed of general polytopal elements and its formulation hinges on degrees of freedom (DOFs) that are polynomials of degree $k\ge 0$ on mesh elements and faces; cf. Refs.~\cite{Di-Pietro.Ern.ea:14,Di-Pietro.Ern:15,Di-Pietro.Droniou.ea:15,Di-Pietro.Ern:16} for an introduction to HHO methods and and Refs.~\cite{Di-Pietro.Droniou:16,Chave.Di-Pietro.ea:16} for applications to nonlinear problems.
Based on such DOFs, a gradient reconstruction operator $\GT$ of degree $k$ and a potential reconstruction operator $\pT$ of degree $(k+1)$ are devised by solving local problems inside each mesh element $T$.
By construction, the composition of the potential reconstruction $\pT$ with the interpolator on the DOF space coincides with the elliptic projector $\eproj[T]{k+1}$.
The gradient and potential reconstruction operators are then used to formulate a local contribution composed of a consistent and a stabilisation term.
The $W^{s,p}$-approximation properties for $\eproj[T]{k+1}$ play a crucial role in estimating the error associated with the latter.
Denoting by $h$ the meshsize, we prove in Theorem \ref{thm:est.error} below that, for smooth enough exact solutions, the approximation error measured in a discrete $W^{1,p}$-like norm converges as $h^{\frac{k+1}{p-1}}$ when $p\ge 2$ and as $h^{(k+1)(p-1)}$ when $1<p<2$.
A detailed comparison with the literature is provided in Remark \ref{rem:order.of.conv}.

As noticed in Ref.~\cite{Di-Pietro.Ern.ea:14}, the lowest-order version of the HHO method corresponding to $k=0$ is essentially analogous (up to equivalent stabilisation) to the SUSHI scheme of Ref.~\cite{Eymard.Gallouet.ea:10} when face unknowns are not eliminated by interpolation. This method, in turn, has been proved in Ref.~\cite{Droniou.Eymard.ea:10} to be
equivalent to the MFV method of Ref.~\cite{Droniou.Eymard:06} and the mixed-hybrid MFD method of Ref.~\cite{Kuznetsov.Lipnikov.ea:04,Brezzi.Lipnikov.ea:05} (cf. also Ref.~\cite{Beirao-da-Veiga.Lipnikov.ea:14} for an introduction to MFD methods). As a consequence, our results extend the analysis
conducted in Ref.~\cite{Droniou:06}, by providing in particular error estimates for the MFV scheme applied to Leray--Lions equations.

To conclude, it is worth mentioning that the tools of Theorems \ref{thm:Wsp.approx} and \ref{thm:Wsp.approx.trace}, alongside the optimum $W^{s,p}$-estimates of Ref.~\cite{Di-Pietro.Droniou:16} for $L^2$-projectors on polynomial spaces (see Lemma \ref{lem:Wkp.interp}), are potentially of interest also for the study of other polytopal methods.
Elliptic projections on polynomial spaces appear, e.g., in the conforming and nonconforming Virtual Element Methods (cf. Eq. (4.18) in Ref.~\cite{Beirao-da-Veiga.Brezzi.ea:13} and Eqs. (3.18)--(3.20) in Ref.~\cite{Ayuso-de-Dios.Lipnikov.ea:16}, respectively).
They also play a role in determining the high-order part of some post-processings of the potential used in the context of Hybridizable Discontinuous Galerkin methods; cf., e.g., the variation proposed in Ref.~\cite{Cockburn.Di-Pietro.ea:16} of the post-processing considered in Refs.~\cite{Cockburn.Gopalakrishnan.ea:10,Cockburn.Qiu.ea:12}.

The rest of the paper is organised as follows.
In Section \ref{sec:eproj} we provide the proofs of Theorems \ref{thm:Wsp.approx} and \ref{thm:Wsp.approx.trace} preceeded by the required preliminary results.
In Section \ref{sec:appl} we use these results to derive error estimates for the HHO discretization of problem \ref{pde:plap}.
\ref{app:llop} collects some useful inequalities for Leray--Lions operators.
%
%------------------------------------------------------------------------------%
%
\section{$W^{s,p}$-approximation properties of the elliptic projector on polynomial spaces}\label{sec:eproj}

This section contains the proofs of Theorems \ref{thm:Wsp.approx} and \ref{thm:Wsp.approx.trace} preceeded by two abstract lemmas for projectors on polynomials subspaces.
Throughout the paper, to alleviate the notation, when writing integrals we omit the dependence on the integration variable $\vec{x}$ as well as the differential with the exception of those integrals involving the function $\bfa$ (cf.~\eqref{pde:plap}).

\subsection{Two abstract results for projectors on polynomial subspaces}\label{sec:eproj:abstract.results}

Our first lemma is an abstract approximation result valid for any projector on a polynomial space that satisfies a suitable boundedness property.

\begin{lemma}[$W^{s,p}$-approximation for $W$-bounded projectors]\label{lem:Wsp.approx.abstract}
  Assume that $U$ is star-shaped with respect to every point of a ball of radius $\varrho h_U$ for some $\varrho>0$.
  Let a real number $p\in[1,+\infty]$ and four integers $l\ge 0$, $s\in\{1,\ldots,l+1\}$, and $q,m\in\{0,\ldots,s\}$ be fixed.
  Let $\proj[q,l]{U}:W^{q,1}(U)\to\Poly{l}(U)$ be a projector such that there exists a real number $C>0$ depending only on $d$, $\varrho$, $l$, $q$, and $p$ such that for all $v\in W^{q,p}(U)$,
  \begin{subequations}\label{eq:Wq1.boundedness}
  \begin{align}\label{eq:Wq1.boundedness.1}
    &\mbox{If $m<q$}:\quad\seminorm[W^{m,p}(U)]{\proj[q,l]{U} v}\le C\sum_{r=m}^qh_U^{r-m}\seminorm[W^{r,p}(U)]{v},\\
		\label{eq:Wq1.boundedness.2}
    &\mbox{If $m\ge q$}:\quad\seminorm[W^{q,p}(U)]{\proj[q,l]{U} v}\le C\seminorm[W^{q,p}(U)]{v},
  \end{align}
  \end{subequations}
  Then, there exists a real number $C>0$ depending only on $d$, $\varrho$, $l$, $q$, $m$, $s$, and $p$ such that, for all $v\in W^{s,p}(U)$,
  \begin{equation}\label{eq:Wsp.approx.abstract}
    \seminorm[W^{m,p}(U)]{v-\proj[q,l]{U}v}\le C h_U^{s-m}\seminorm[W^{s,p}(U)]{v}.
  \end{equation}
\end{lemma}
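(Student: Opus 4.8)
The plan is to combine the Dupont--Scott polynomial approximation theory with the assumed boundedness of $\proj[q,l]{U}$, using the idea that $\proj[q,l]{U}$ reproduces polynomials of degree $\le l$. First I would invoke the Dupont--Scott theory (Ref.~\cite{Dupont.Scott:80}): since $U$ is star-shaped with respect to a ball of radius $\varrho h_U$, there exists a polynomial $w\in\Poly{s-1}(U)\subset\Poly{l}(U)$ (the averaged Taylor polynomial of degree $s-1$ of $v$) such that, for all $r\in\{0,\ldots,s\}$,
\begin{equation*}
  \seminorm[W^{r,p}(U)]{v-w}\le C h_U^{s-r}\seminorm[W^{s,p}(U)]{v},
\end{equation*}
with $C$ depending only on $d$, $\varrho$, $s$, and $p$. (When $p<\infty$ this is the classical Bramble--Hilbert/Dupont--Scott estimate; for $p=+\infty$ one uses the $L^\infty$ version, valid because of the chosen scaling of the seminorms.)

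Next I would write the triangle inequality $\seminorm[W^{m,p}(U)]{v-\proj[q,l]{U}v}\le\seminorm[W^{m,p}(U)]{v-w}+\seminorm[W^{m,p}(U)]{\proj[q,l]{U}(w-v)}$, using that $\proj[q,l]{U}$ is a projector so $\proj[q,l]{U}w=w$ (here $m\ge 1$ is not needed; if $q\ge 1$ then $w\in\Poly{s-1}(U)$ has at least the regularity $W^{q,1}$ required for $\proj[q,l]{U}$ to be applied, and $s-1\ge q-1$ so in fact $s\ge q$ always holds by hypothesis $s\ge 1$... more precisely one needs $s-1\ge 0$, which holds, and the image being in $\Poly{l}$ with $s-1\le l$). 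The first term is controlled directly by the Dupont--Scott estimate with $r=m$. For the second term I would apply the boundedness hypothesis \eqref{eq:Wq1.boundedness} to the function $w-v\in W^{q,p}(U)$: in the case $m<q$, \eqref{eq:Wq1.boundedness.1} gives
\begin{equation*}
  \seminorm[W^{m,p}(U)]{\proj[q,l]{U}(w-v)}\le C\sum_{r=m}^q h_U^{r-m}\seminorm[W^{r,p}(U)]{w-v}\le C\sum_{r=m}^q h_U^{r-m}\cdot h_U^{s-r}\seminorm[W^{s,p}(U)]{v}=C(q-m+1)h_U^{s-m}\seminorm[W^{s,p}(U)]{v};
\end{equation*}
in the case $m\ge q$, I would instead bound $\seminorm[W^{m,p}(U)]{\proj[q,l]{U}(w-v)}$ by first using that $\proj[q,l]{U}(w-v)\in\Poly{l}(U)$ together with an inverse inequality on $U$ (again available from the star-shapedness, via Ref.~\cite{Di-Pietro.Droniou:16}) to pass from the $W^{m,p}$-seminorm to the $W^{q,p}$-seminorm at the cost of a factor $h_U^{q-m}$, and then apply \eqref{eq:Wq1.boundedness.2} and the Dupont--Scott estimate with $r=q$: this yields $C h_U^{q-m}\seminorm[W^{q,p}(U)]{w-v}\le C h_U^{q-m}\cdot h_U^{s-q}\seminorm[W^{s,p}(U)]{v}=C h_U^{s-m}\seminorm[W^{s,p}(U)]{v}$. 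Adding the two contributions gives \eqref{eq:Wsp.approx.abstract}.

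The main obstacle, I expect, is the bookkeeping around the two regimes $m<q$ and $m\ge q$ and making sure the inverse inequality used in the second regime is legitimate with a constant depending only on the allowed parameters; this is where the reverse Sobolev embeddings for polynomials from Ref.~\cite{Di-Pietro.Droniou:16} are essential, since a generic star-shaped $U$ need not be shape-regular in the simplicial sense. A secondary technical point is verifying that the averaged Taylor polynomial $w$ lies in the domain $W^{q,1}(U)$ of $\proj[q,l]{U}$ and that $\proj[q,l]{U}w=w$ — this is immediate since $w$ is a polynomial of degree $s-1\le l$ and $\proj[q,l]{U}$ is a projector onto $\Poly{l}(U)$, but it must be stated. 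Apart from these points the argument is a clean triangle-inequality-plus-polynomial-invariance computation.
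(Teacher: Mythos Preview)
Your proposal is correct and follows essentially the same route as the paper's proof: you invoke the averaged Taylor polynomial $w=Q^s v\in\Poly{s-1}(U)\subset\Poly{l}(U)$, use the projector property $\proj[q,l]{U}w=w$ to write $v-\proj[q,l]{U}v=(v-w)-\proj[q,l]{U}(v-w)$, and then bound the two pieces separately using the Dupont--Scott remainder estimate and the boundedness hypotheses \eqref{eq:Wq1.boundedness}, invoking the reverse Sobolev embeddings of Ref.~\cite{Di-Pietro.Droniou:16} in the case $m\ge q$. The paper does exactly this (writing $R^s v$ for your $v-w$); your identification of the inverse inequality as the key technical ingredient in the $m\ge q$ regime matches the paper's use of Remark~A.2 in Ref.~\cite{Di-Pietro.Droniou:16}.
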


\begin{proof}
  Here $A\lesssim B$ means $A\le MB$ with real number $M>0$ having the same dependencies as $C$ in \eqref{eq:Wsp.approx.abstract}.
  Since smooth functions are dense in $W^{s,p}(U)$, we can assume $v\in C^\infty(U)\cap W^{s,p}(U)$.
  We consider the following representation of $v$, proposed in Chapter 4 of Ref.~\cite{Brenner.Scott:08}:
  \begin{equation}\label{sob.rep}
    v=Q^s v + R^s v,
  \end{equation}
  where $Q^sv\in\Poly{s-1}(U)\subset\Poly{l}(U)$ is the averaged Taylor polynomial, while the remainder $R^sv$ satisfies, for all $r\in\{0,\ldots,s\}$ (cf. Lemma 4.3.8 in Ref.~\cite{Brenner.Scott:08}),
  \begin{equation}\label{sob.rep.rem}
    \seminorm[W^{r,p}(U)]{R^s v}\lesssim h_U^{s-r}\seminorm[W^{s,p}(U)]{v}.
  \end{equation}
  Since $\proj[q,l]{U}$ is a projector, it holds $\proj[q,l]{U}(Q^sv)=Q^sv$ so that, taking the projection of~\eqref{sob.rep}, it is inferred
  $$
  \proj[q,l]{U}v=Q^sv + \proj[q,l]{U}(R^sv).
  $$
  Subtracting this equation from \eqref{sob.rep}, we arrive at $v-\proj[q,l]{U}v=R^sv-\proj[q,l]{U}(R^sv)$.
  Hence, the triangle inequality yields
  \begin{equation}\label{eq:Wsp.approx.abs:1}
    \seminorm[W^{m,p}(U)]{v-\proj[q,l]{U}v}\le
    \seminorm[W^{m,p}(U)]{R^sv}+ \seminorm[W^{m,p}(U)]{\proj[q,l]{U}(R^sv)}.
  \end{equation}  
  For the first term in the right-hand side, the estimate \eqref{sob.rep.rem} with $r=m$ readily yields
    \begin{equation}\label{eq:Wsp.approx.abs:2}
      \seminorm[W^{m,p}(U)]{R^sv}\lesssim h_U^{s-m}\seminorm[W^{s,p}(U)]{v}.
    \end{equation}
    Let us estimate the second term.
    If $m<q$, using the boundedness assumption \eqref{eq:Wq1.boundedness.1} followed by the estimate \eqref{sob.rep.rem}, it is inferred
    $$
    \begin{aligned}
    \seminorm[W^{m,p}(U)]{\proj[q,l]{U}(R^sv)}
      &\lesssim\sum_{r=m}^qh_U^{r-m}\seminorm[W^{r,p}(U)]{R^sv} \\
      &\lesssim\sum_{r=m}^qh_U^{r-m} h_U^{s-r}\seminorm[W^{s,p}(U)]{v}
    \lesssim h_U^{s-m}\seminorm[W^{s,p}(U)]{v}.
    \end{aligned}
      $$
  If, on the other hand, $m\ge q$, using the reverse Sobolev embeddings on polynomial spaces of Remark A.2 in Ref.~\cite{Di-Pietro.Droniou:16} followed by assumption~\eqref{eq:Wq1.boundedness.2} and the estimate \eqref{sob.rep.rem} with $r=q$, it is inferred that
  $$
  \begin{aligned}
    \seminorm[W^{m,p}(U)]{\proj[q,l]{U}(R^sv)}
    &\lesssim h_U^{q-m}\seminorm[W^{q,p}(U)]{\proj[q,l]{U}(R^sv)} \\
    &\lesssim h_U^{q-m}\seminorm[W^{q,p}(U)]{R^sv}
    \lesssim h_U^{s-m}\seminorm[W^{s,p}(U)]{v}.
  \end{aligned}
    $$
    In conclusion we have, in either case $m<q$ or $m\ge q$,
    \begin{equation}\label{eq:Wsp.approx.abs:3}
      \seminorm[W^{m,p}(U)]{\proj[q,l]{U}(R^sv)}\lesssim h_U^{s-m}\seminorm[W^{s,p}(U)]{v}.
    \end{equation}
    Using \eqref{eq:Wsp.approx.abs:2} and \eqref{eq:Wsp.approx.abs:3} to estimate the first and second term in the right-hand side of \eqref{eq:Wsp.approx.abs:1}, respectively, the conclusion follows.
\end{proof}

Our second technical result concerns the $L^p$-boundedness of $L^2$-orthogonal projectors on polynomial subspaces, and will be central to prove property \eqref{eq:Wq1.boundedness} (with $q=1$) for the elliptic projector $\eproj{l}$.
This result generalises Lemma 3.2 in Ref.~\cite{Di-Pietro.Droniou:16}, which corresponds to $\mathcal P=\Poly{l}(U)$.

\begin{lemma}[$L^p$-boundeness of $L^2$-orthogonal projectors on polynomial subspaces]\label{lem:Lpstab.gradproj}
  Let two integers $l\ge 0$ and $n\ge 1$ be fixed, and let $\mathcal{P}$ be a subspace of $\Poly{l}(U)^n$.
  We consider the $L^2$-orthogonal projector $\proj{\mathcal{P}}:L^1(U)^n\to\mathcal{P}$ such that, for all $\Phi\in L^1(U)^n$,
  \begin{equation}\label{def:Lproj}
    \int_T (\proj{\mathcal{P}}\Phi-\Phi)\SCAL\Psi = 0\text{ for all $\Psi\in \mathcal{P}$}.
  \end{equation}  
  Let $p\in [1,+\infty]$.
  Let $r_U$ be the inradius of $U$ and assume that there is a real number $\delta$ such that $$\frac{r_U}{h_U}\ge\delta>0.$$
  Then, there exists a real number $C>0$ depending only on $n$, $d$, $\delta$, $l$, and $p$ such that
  \begin{equation}\label{eq:Lp.boundedness.gradproj}
    \forall\Phi\in L^p(U)^n\,:\,\norm[L^p(U)^n]{\proj{\mathcal{P}}\Phi}\le C\norm[L^p(U)^n]{\Phi}.
  \end{equation}
\end{lemma}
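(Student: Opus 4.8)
The plan is to establish the bound first for $p\in[2,+\infty]$, using only the $L^2$-stability of $\proj{\mathcal{P}}$ together with the uniform equivalence of the $L^p$- and $L^2$-norms on the finite-dimensional polynomial space $\mathcal{P}$, and then to recover the range $p\in[1,2)$ by a duality argument based on the $L^2$-self-adjointness of $\proj{\mathcal{P}}$. The one non-elementary ingredient I would invoke is the reverse Lebesgue inequality for polynomial functions of Ref.~\cite{Di-Pietro.Droniou:16}: since $\mathcal{P}\subset\Poly{l}(U)^n$ consists of vectors of polynomials of degree at most $l$, there is $C_1>0$ depending only on $n$, $d$, $\delta$, $l$ and $p$ such that
\[
  \norm[L^p(U)^n]{w}\le C_1\,h_U^{d(\frac1p-\frac12)}\,\norm[L^2(U)^n]{w}\qquad\text{for all }w\in\mathcal{P},\ p\in[2,+\infty].
\]
Crucially, its constant depends on $U$ only through $\delta=r_U/h_U$, so it is insensitive to the particular choice of $\mathcal{P}$; I would combine it with the volume bound $|U|\lesssim h_U^d$ (immediate from $\mathrm{diam}(U)=h_U$) and with Hölder's inequality $\norm[L^2(U)^n]{\Phi}\le|U|^{\frac12-\frac1p}\norm[L^p(U)^n]{\Phi}$.

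For $p\in[2,+\infty]$ and $\Phi\in L^p(U)^n\subset L^2(U)^n$, I would simply chain these facts, applied to $\proj{\mathcal{P}}\Phi\in\mathcal{P}$:
\[
  \norm[L^p(U)^n]{\proj{\mathcal{P}}\Phi}\lesssim h_U^{d(\frac1p-\frac12)}\norm[L^2(U)^n]{\proj{\mathcal{P}}\Phi}\le h_U^{d(\frac1p-\frac12)}\norm[L^2(U)^n]{\Phi}\lesssim h_U^{d(\frac1p-\frac12)}h_U^{d(\frac12-\frac1p)}\norm[L^p(U)^n]{\Phi}=\norm[L^p(U)^n]{\Phi},
\]
where the middle step is the $L^2$-stability $\norm[L^2(U)^n]{\proj{\mathcal{P}}\Phi}\le\norm[L^2(U)^n]{\Phi}$ (orthogonal projection in the Hilbert space $L^2(U)^n$); the two powers of $h_U$ cancel and \eqref{eq:Lp.boundedness.gradproj} follows.

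For $p\in[1,2)$, let $p'\in(2,+\infty]$ be the conjugate exponent. By the duality characterisation of the $L^p$-norm, $\norm[L^p(U)^n]{\proj{\mathcal{P}}\Phi}=\sup\bigl\{\int_U\proj{\mathcal{P}}\Phi\SCAL\Psi\ :\ \Psi\in L^{p'}(U)^n,\ \norm[L^{p'}(U)^n]{\Psi}\le1\bigr\}$. For such $\Psi$, boundedness of $U$ gives $\Psi\in L^1(U)^n$, so $\proj{\mathcal{P}}\Psi$ is well defined, and applying the defining relation \eqref{def:Lproj} twice — once with test function $\proj{\mathcal{P}}\Phi\in\mathcal{P}$, once with test function $\proj{\mathcal{P}}\Psi\in\mathcal{P}$ — I get $\int_U\proj{\mathcal{P}}\Phi\SCAL\Psi=\int_U\proj{\mathcal{P}}\Phi\SCAL\proj{\mathcal{P}}\Psi=\int_U\Phi\SCAL\proj{\mathcal{P}}\Psi$. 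Hölder's inequality and the case $p'\in(2,+\infty]$ already established then bound the right-hand side by $\norm[L^p(U)^n]{\Phi}\,\norm[L^{p'}(U)^n]{\proj{\mathcal{P}}\Psi}\lesssim\norm[L^p(U)^n]{\Phi}$, and passing to the supremum over $\Psi$ concludes.

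The main (and essentially only) delicate point is keeping every constant independent of $\mathcal{P}$ and of $h_U$, which is precisely what the reverse Lebesgue inequality on polynomials delivers through its sole geometric dependence on $\delta$; the remaining ingredients — $L^2$-stability, Hölder, the volume bound, and the self-adjointness identity — are routine, the last one only requiring the trivial observation that any argument of $\proj{\mathcal{P}}$ lies in $L^1(U)^n$ since $U$ is bounded. As an aside, once the endpoints $p=1$ and $p=+\infty$ are in hand the intermediate values also follow at once from the Riesz--Thorin interpolation theorem applied to the fixed linear operator $\proj{\mathcal{P}}$.
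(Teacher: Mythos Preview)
Your proposal is correct and follows essentially the same approach as the paper's proof: the case $p=2$ is trivial by $L^2$-orthogonality, the case $p>2$ combines the reverse Lebesgue embedding for polynomials with H\"older's inequality so that the scaling factors cancel, and the case $p<2$ is obtained by duality via the self-adjointness identity $\int_U(\proj{\mathcal{P}}\Phi)\SCAL\Psi=\int_U\Phi\SCAL(\proj{\mathcal{P}}\Psi)$. The only cosmetic difference is that you phrase the reverse Lebesgue inequality with the factor $h_U^{d(1/p-1/2)}$ while the paper uses $\meas[d]{U}^{1/p-1/2}$; since the inradius assumption gives $\meas[d]{U}\approx h_U^d$ with constants depending only on $d$ and $\delta$, the two formulations are equivalent.
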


\begin{remark}[Dependence of $C$ in~\eqref{eq:Lp.boundedness.gradproj}]
  At least on selected geometries, inequality~\eqref{eq:Lp.boundedness.gradproj} holds with constant $C$ independent of $\delta$.
  Whether this is true in general remains an open question, which possibly requires different techniques than the ones used here to answer.
  In any case, this does not change the fact that the constants appearing in Theorems~\ref{thm:Wsp.approx} and~\ref{thm:Wsp.approx.trace} \emph{do} depend on $\varrho$.
\end{remark}

\begin{proof}
  We abridge as $A\lesssim B$ the inequality $A\le MB$ with real number $M>0$ having the same dependencies as $C$.
  Since $\proj{\mathcal{P}}$ is an $L^2$-orthogonal projector, \eqref{eq:Lp.boundedness.gradproj} trivially holds with $C=1$ if $p=2$.
  On the other hand, if $p>2$, we have, using the reverse Lebesgue embeddings on polynomial spaces of Lemma 3.2 in Ref.~\cite{Di-Pietro.Droniou:16} followed by~\eqref{eq:Lp.boundedness.gradproj} for $p=2$,
  \[
  \norm[L^{p}(U)^n]{\proj{\mathcal{P}}\Phi}
  \lesssim\meas[d]{U}^{\frac1p-\frac12}\norm[L^2(U)^n]{\proj{\mathcal{P}}\Phi}
  \lesssim\meas[d]{U}^{\frac1p-\frac12}\norm[L^2(U)^n]{\Phi}.
  \]
  Here, $\meas[d]{U}$ is the $d$-dimensional measure of $U$.
  Using the H\"{o}lder inequality to infer $\norm[L^2(U)^n]{\Phi}\lesssim\meas[d]{U}^{\frac12-\frac1p}\norm[L^p(U)^n]{\Phi}$ concludes the proof for $p>2$.    
  It only remains to treat the case $p<2$.
  We first observe that, using the definition \eqref{def:Lproj} of $\proj{\mathcal{P}}$ twice, for all
  $\Phi,\Psi\in L^1(U)^n$,
  \[
  \int_U (\proj{\mathcal{P}}\Phi)\SCAL \Psi=	\int_U (\proj{\mathcal{P}}\Phi)\SCAL (\proj{\mathcal{P}}\Psi)=
  \int_U \Phi\SCAL (\proj{\mathcal{P}}\Psi).
  \]
  Hence, with $p'$ such that $\frac{1}{p}+\frac{1}{p'}=1$, it holds
  \begin{equation}\label{eq:Lp.boundedness.gradproj:1}
    \begin{aligned}
      \norm[L^p(U)^n]{\proj{\mathcal{P}}\Phi}
      &= \sup_{\Psi\in L^{p'}(U)^n,\norm[L^{p'}(U)^n]{\Psi}=1} \int_U(\proj{\mathcal{P}}\Phi)\SCAL \Psi
      \\
      &=\sup_{\Psi\in L^{p'}(U)^n,\norm[L^{p'}(U)^n]{\Psi}=1} \int_U \Phi\SCAL (\proj{\mathcal{P}}\Psi)
      \\
      &\le \sup_{\Psi\in L^{p'}(U)^n,\norm[L^{p'}(U)^n]{\Psi}=1} \norm[L^p(U)^n]{\Phi}\norm[L^{p'}(U)^n]{\proj{\mathcal{P}}\Psi},
    \end{aligned}
  \end{equation}
  where we have used the H\"{o}lder inequality to conclude.
  Using \eqref{eq:Lp.boundedness.gradproj} for $p'>2$, we have $\norm[L^{p'}(U)^n]{\proj{\mathcal{P}}\Psi}\lesssim\norm[L^{p'}(U)^n]{\Psi}=1$. Plugging this bound into \eqref{eq:Lp.boundedness.gradproj:1} concludes the proof for $p<2$.
\end{proof}

\subsection{Proof of the main results}

We are now ready to prove Theorems \ref{thm:Wsp.approx} and \ref{thm:Wsp.approx.trace}.
Inside the proofs, $A\lesssim B$ means $A\le MB$ with $M$ having the same dependencies as the real number $C$ in the corresponding statement.

\subsubsection{Proof of Theorem \ref{thm:Wsp.approx}}\label{sec:Wsp.approx}

The proof of~\eqref{eq:Wsp.approx} is obtained applying Lemma \ref{lem:Wsp.approx.abstract} with $q=1$ and $\proj[1,l]{U}=\eproj[U]{l}$. To prove that the condition \eqref{eq:Wq1.boundedness} holds, we distinguish two cases: $m\ge 1$, treated in {\bf Step 1}, and $m=0$, treated in {\bf Step 2}.

  \begin{asparaenum}[\bf Step 1.]
  \item \emph{The case $m\ge 1$.} %% $\seminorm[W^{1,p}(U)]{{\cdot}}$-boundedness.}
    We need to show that~\eqref{eq:Wq1.boundedness.2} holds, i.e.,
    \begin{equation}\label{eq:W1p.boundedness}  
      \forall v\in W^{1,p}(U)\,:\,  
      \seminorm[W^{1,p}(U)]{\eproj{l} v}\lesssim\seminorm[W^{1,p}(U)]{v}.
    \end{equation}
    By definition \eqref{eq:eproj} of $\eproj[U]{l}$, it holds, for all $v\in W^{1,1}(U)$, 
    \begin{equation}\label{eq:Wsp.approx:1}
      \GRAD\eproj[U]{l} v = \proj{\GRAD\Poly{l}(U)}\GRAD v,
    \end{equation}
    where $\proj{\GRAD\Poly{l}(U)}$ denotes the $L^2$-orthogonal projector on $\GRAD\Poly{l}(U)\subset\Poly{l-1}(U)^{d}$.
    Then, \eqref{eq:W1p.boundedness} is proved observing that, by definition \eqref{eq:Wsp.norm} of the $\seminorm[W^{1,p}(U)]{{\cdot}}$-seminorm, and invoking \eqref{eq:Wsp.approx:1} and the $(L^p)^d$-boundedness of $\proj{\GRAD\Poly{l}(U)}$ resulting from \eqref{eq:Lp.boundedness.gradproj} with $\mathcal{P}=\GRAD\Poly{l}(U)$, we have
    \begin{equation*}
      \seminorm[W^{1,p}(U)]{\eproj{l}v}
      \lesssim\norm[L^p(U)^d]{\GRAD\eproj{l}v}
      =\norm[L^p(U)^d]{\proj{\GRAD\Poly{l}(U)}\GRAD v}
      \lesssim \norm[L^p(U)^d]{\GRAD v}\lesssim \seminorm[W^{1,p}(U)]{v}.
    \end{equation*}

  \item \emph{The case $m=0$.} %% \emph{$\norm[L^p(U)]{{\cdot}}$-boundedness.}
    We need to prove that~\eqref{eq:Wq1.boundedness.1} holds, i.e.,
      \begin{equation}\label{eq:Lp.boundedness}
        \forall v\in W^{1,p}(U)\,:\,
        \norm[L^p(U)]{\eproj{l} v}\lesssim h_U\seminorm[W^{1,p}(U)]{v} + \norm[L^p(U)]{v}.
      \end{equation}
      Let $v\in W^{1,p}(U)$ and denote by $\overline{v}\in\Poly{0}(U)$ the $L^2$-orthogonal projection of $v$ on $\Poly{0}(U)$ such that
      $$
      \text{$\int_U (v-\overline{v})=0$, that is, $\overline{v}=\frac{1}{|U|_d}\int_U v$.}
      $$
      By definition \eqref{eq:eproj} of the elliptic projector, $\overline{v}$ is also the $L^2$-orthogonal projection on $\Poly{0}(U)$ of $\eproj[U]{l}v$. The $W^{s,p}$-approximation of the $L^2$-projector \eqref{eq:approx.lproj.Wsp} (applied with $m=0$ and $s=1$ to $\eproj[U]{l}v$ instead of $v$) therefore gives $\norm[L^p(U)]{\eproj{l} v-\overline{v}}\lesssim h_U\seminorm[W^{1,p}(U)]{\eproj{l} v}$. This yields
      $$
      \begin{aligned}
        \norm[L^p(U)]{\eproj{l} v}
        &\le\norm[L^p(U)]{\eproj{l} v-\overline{v}} + \norm[L^p(U)]{\overline{v}}
        \\
        &\lesssim h_U\seminorm[W^{1,p}(U)]{\eproj{l} v} + \norm[L^p(U)]{\overline{v}}
        \\
        &\lesssim h_U\seminorm[W^{1,p}(U)]{v} + \norm[L^p(U)]{v},
      \end{aligned}
      $$
      where we have introduced $\pm\overline{v}$ inside the norm and used the triangle inequality in the first line, and the terms in the third line are have been estimated using \eqref{eq:W1p.boundedness} for the first one and the  Jensen inequality for the second one.
  \end{asparaenum}
  
\subsubsection{Proof of Theorem \ref{thm:Wsp.approx.trace}}\label{sec:Wsp.approx.trace}

  Under the assumptions on $U$, we have the following $L^p$-trace inequality (cf. Lemma~3.6 in Ref.~\cite{Di-Pietro.Droniou:16} for a proof): For all $w\in W^{1,p}(U)$,
  \begin{equation}\label{ineq.cont.trace}
    h_U^{\frac1p}\norm[L^p(\partial U)]{w}\lesssim \norm[L^p(U)]{w}+h_U\norm[L^p(U)]{\GRAD w}.
  \end{equation}
  For $m\le s-1$, by applying \eqref{ineq.cont.trace} to $w=\partial^{\vec{\alpha}}(v-\eproj[U]{l}v)\in W^{1,p}(U)$ for all $\vec{\alpha}\in\Natural^d$ such that $\norm[1]{\vec{\alpha}}=m$, we find
  \[
  h_U^{\frac1p}\seminorm[{W^{m,p}(\Fh[U])}]{v-\eproj[U]{l}v}\lesssim 
  \seminorm[W^{m,p}(U)]{v-\eproj[U]{l}v}+h_U\seminorm[W^{m+1,p}(U)]{v-\eproj[U]{l}v}.
  \]
  To conclusion follows using~\eqref{eq:Wsp.approx} for $m$ and $m+1$ to bound the two terms in the right-hand side.
%
%------------------------------------------------------------------------------%
%
\section{Error estimates for a Hybrid High-Order discretisation of Leray--Lions problems}\label{sec:appl}

In this section we use the approximation results for the elliptic projector to derive new error estimates for the HHO discretisation of Leray--Lions problems introduced in Ref.~\cite{Di-Pietro.Droniou:16} (where convergence to minimal regularity solutions is proved using a compactness argument).

\subsection{Continuous model}

We consider problem \eqref{pde:plap} under the following assumptions for a fixed $p\in (1,+\infty)$ with $p'\eqbydef\frac{p}{p-1}$:
\begin{subequations}
  \label{assum:gen}
  \begin{equation}
    \label{hyp:fg}
    f\in L^{p'}(\Omega),
  \end{equation}
  \begin{equation}
    \label{hyp:acarat}
    \mbox{$\bfa:\Omega\times\Real^d\to\Real^d$ is a Caratheodory function},
  \end{equation}
  \begin{equation}
    \label{hyp:ag}
    \begin{array}{l}
      \bfa(\cdot,\vec{0})\in L^{p'}(\Omega)^d\mbox{ and }
      \exists\upa\in(0,+\infty)\,:\\
      |\bfa(\vec{x},\vec{\xi})-\bfa(\vec{x},\vec{0})|\le \upa|\vec{\xi}|^{p-1}
      \mbox{ for a.e. $\vec{x}\in\Omega$, for all $\vec{\xi}\in\Real^d$},
    \end{array}
  \end{equation}
  \begin{equation}
    \label{hyp:ac}
    \exists \coera\in (0,+\infty)\,:\,\bfa(\vec{x},\vec{\xi})\cdot\vec{\xi} \ge \coera|\vec{\xi}|^p
    \mbox{ for a.e. $\vec{x}\in\Omega$, for all $\vec{\xi}\in\Real^d$},
  \end{equation}
  \begin{equation}
    \label{hyp:alip}
    \begin{array}{l}
      \exists\lipa\in (0,+\infty)\,:\,
      |\bfa(\vec{x},\vec{\xi})-\bfa(\vec{x},\vec{\eta})|\le \lipa |\vec{\xi}-\vec{\eta}|
      (|\vec{\xi}|^{p-2}+|\vec{\eta}|^{p-2})\\
      \qquad\mbox{ for a.e. $\vec{x}\in\Omega$, for all $(\vec{\xi},\vec{\eta})\in\Real^d\times\Real^d$},
    \end{array}
  \end{equation}
  \begin{equation}
    \label{hyp:am}
    \begin{array}{l}
      \exists\mona\in (0,+\infty)\,:\,
      [\bfa(\vec{x},\vec{\xi})-\bfa(\vec{x},\vec{\eta})]\cdot[\vec{\xi}-\vec{\eta}]\ge 
      \mona |\vec{\xi}-\vec{\eta}|^2(|\vec{\xi}|+|\vec{\eta}|)^{p-2}\\
      \qquad\mbox{ for a.e. $\vec{x}\in\Omega$, for all $(\vec{\xi},\vec{\eta})\in\Real^d\times\Real^d$},
    \end{array}
  \end{equation}
\end{subequations}

Assumptions \eqref{hyp:acarat}--\eqref{hyp:ac} are the pillars of Leray--Lions operators and stipulate, respectively, the regularity for $\bfa$, its growth, and its coercivity.
Assumptions \eqref{hyp:alip} and \eqref{hyp:am} additionally require the Lipschitz continuity and uniform monotonicity of $\bfa$ in an appropriate form.

\begin{remark}[$p$-Laplacian]
A particularly important example of Leray--Lions problem is the $p$-Laplace equation, which corresponds to the function
\begin{equation}\label{choice:plap}
  \bfa(\vec{x},\vec{\xi})=|\vec{\xi}|^{p-2}\vec{\xi}.
\end{equation}
Properties \eqref{hyp:acarat}--\eqref{hyp:ac} are trivially verified for this choice, which additionally verifies \eqref{hyp:alip} and \eqref{hyp:am};
cf. Ref.~\cite{Barrett.Liu:94} for a proof of the former and Ref.~\cite{gdm} for a proof of both.
\end{remark}

As usual, problem \eqref{pde:plap} is understood in the following weak sense:
\begin{equation}\label{pde:plap.weak}
	\begin{aligned}
  &\text{Find $u\in W^{1,p}_0(\Omega)$ such that, for all $v\in W^{1,p}_0(\Omega)$,}\\
	&\int_\Omega \bfa(\vec{x},\GRAD u(\vec{x}))\cdot \GRAD v(\vec{x})\dx = \int_\Omega fv,
	\end{aligned}
\end{equation}
where $W^{1,p}_0(\Omega)$ is spanned by the elements of $W^{1,p}(\Omega)$ that vanish on $\partial\Omega$ in the sense of traces.

\subsection{The Hybrid High-Order method}

We briefly recall here the construction of the HHO method and a few known results that will be needed in the analysis.

\subsubsection{Mesh and notations}

Let us start by the notion of mesh, inspired from Definition 7.2 in Ref.~\cite{gdm}, and some associated notations.

\begin{definition}[Mesh and set of faces]\label{def:mesh}
  A mesh $\Th$ of the domain $\Omega$ is a finite collection of nonempty disjoint open polytopal elements $T$ with boundary $\partial T$ and diameter $h_T$
  such that $\closure{\Omega}=\bigcup_{T\in\Th}\closure{T}$ and $h=\max_{T\in\Th} h_T$.

  The set of faces $\Fh$ is a finite family of disjoint subsets of $\overline{\Omega}$
  such that, for any $F\in\Fh$, $F$ is an open subset of a hyperplane of $\Real^d$,
  the $(d{-}1)$-dimensional Hausdorff measure
  of $F$ is strictly positive, and the $(d-1)$-dimensional Hausdorff measure
  of its relative interior $\closure{F}\backslash F$ is zero. The diameter of $F$ is
denoted by $h_F$.
  Additionally,
  \begin{enumerate}[(i)]
  \item For each $F\in \Fh$, either
    \begin{inparaenum}[(a)]%
    \item there exist distinct mesh elements $T_1,T_2\in\Th $ such that $F\subset\partial T_1\cap\partial T_2$ and $F$ is called an interface or 
    \item there exists a mesh element $T\in\Th$ (which is unique since $\Omega$ is assumed to have no cracks) such that $F\subset\partial T\cap\partial\Omega$ and $F$ is called a boundary face.
\end{inparaenum}
  \item The set of faces is a partition of the mesh skeleton:
    $\bigcup_{T\in\Th}\partial T = \bigcup_{F\in\Fh}\closure{F}$.
  \end{enumerate}
  
  For any mesh element $T\in\Th$, $\Fh[T]\eqbydef\{F\in\Fh\st F\subset\partial T\}$ denotes the set of faces contained in $\partial T$.
  For all $F\in\Fh[T]$, $\normal_{TF}$ is the unit normal to $F$ pointing out of $T$.

  Interfaces are collected in the set $\Fhi$, boundary faces in $\Fhb$, and $\Fh=\Fhi\cup\Fhb$.
\end{definition}
\medskip
\begin{remark}[Element and boundary faces]
  As a result of Definition~\ref{def:mesh}, above, it holds that $\partial T=\bigcup_{F\in\Fh[T]}\closure{F}$ for all $T\in\Th$, and that $\partial\Omega=\bigcup_{F\in\Fhb}\closure{F}$.
\end{remark}

Throughout the rest of the paper, we assume the following regularity for $\Th$ inspired by Chapter 1 in Ref.~\cite{Di-Pietro.Ern:12}.

\begin{assumption}[Regularity assumption on $\Th$]
  \label{def:adm.Th}
  The mesh $\Th$ admits a matching simplicial submesh $\fTh$ and there exists a real number $\varrho>0$ such that:
  \begin{inparaenum}[(i)]%
  \item For all simplices $S\in\fTh$ of diameter $h_S$ and inradius $r_S$, $\varrho h_S\le r_S$, and
  \item for all $T\in\Th$, and all $S\in\fTh$ such that $S\subset T$, $\varrho h_T \le h_S$.
  \end{inparaenum}
\end{assumption}

When working on refined mesh sequences, all the (explicit or implicit) constants we consider below remain bounded provided that $\varrho$ remains bounded away from $0$ in the refinement process.
Additionally, mesh elements satisfy the geometric regularity assumptions that enable the use of both Theorems \ref{thm:Wsp.approx} and \ref{thm:Wsp.approx.trace} (as well as Lemma \ref{lem:Wkp.interp} below).

\subsubsection{Degrees of freedom and interpolation operators}

Let a polynomial degree $k\ge 0$ and a mesh element $T\in\Th$ be fixed. The local space of
degrees of freedom (DOFs) is
\begin{equation}
  \label{eq:UT}
  \UT\eqbydef\Poly{k}(T)\times\left(
  \bigtimes_{F\in\Fh[T]}\Poly{k}(F)
  \right),
\end{equation}
where $\Poly{k}(F)$ denotes the space spanned by the restriction to $F$ of $d$-variate polynomials.
We use the underlined notation $\sv=(\unv[T],(\unv[F])_{F\in\Fh[T]})$ for a generic element $\sv\in\UT$.
If $U=T\in\Th$ or $U=F\in\Fh$, we define the $L^2$-projector $\lproj[U]{l}: L^1(U)\to \Poly{l}(U)$ such that,
for any $v\in L^1(U)$, $\lproj[U]{l}v$ is the unique element of $\Poly{l}(U)$ satisfying
\begin{equation}\label{def:lproj}
  \forall w\in\Poly{l}(U)\,:\,
  \int_U (\lproj[U]{l}v-v)~w = 0.
\end{equation}
When applied to vector-valued function, it is understood that $\lproj[U]{l}$ acts component-wise.
The local interpolation operator $\IT: W^{1,1}(T)\to\UT$ is then given by
\begin{equation}
  \label{eq:IT}
  \forall v\in W^{1,1}(T)\,:\,  \IT v \eqbydef (\lproj[T]{k}v, (\lproj[F]{k}v)_{F\in\Fh[T]}).
\end{equation}

Local DOFs are collected in the following global space obtained by patching interface values:
\begin{equation*}
  \Uh\eqbydef\left(
  \bigtimes_{T\in\Th}\Poly{k}(T)
  \right)\times
  \left(
  \bigtimes_{F\in\Fh}\Poly{k}(F)
  \right).
\end{equation*}
A generic element of $\Uh$ is denoted by $\sv[h]=((\unv[T])_{T\in\Th},(\unv[F])_{F\in\Fh})$ and, for all $T\in\Th$, $\sv[T]=(\unv[T],(\unv[F])_{F\in\Fh[T]})$ is its restriction to $T$.
We also introduce the notation $\unv[h]$ for the broken polynomial function in $\Poly{k}(\Th)\eqbydef\left\{v\in L^1(\Omega)\,:\,\restrto{v}{T}\in\Poly{k}(T)\quad\forall T\in\Th\right\}$ obtained from element-based DOFs by setting $\restrto{\unv[h]}{T}=\unv[T]$ for all $T\in\Th$.
The global interpolation operator $\Ih:W^{1,1}(\Omega)\to\Uh$ is such that
\begin{equation}\label{eq:Ih}
  \forall v\in W^{1,1}(\Omega)\,:\,
  \Ih v \eqbydef ( (\lproj[T]{k} v)_{T\in\Th}, (\lproj[F]{k} v)_{F\in\Fh} ).
\end{equation}

\subsubsection{Gradient and potential reconstructions}

For $U=T\in\Th$ or $U=F\in\Fh$, we denote henceforth by $(\cdot,\cdot)_U$ the $L^2$- or $(L^2)^d$-inner product on $U$.
The HHO method hinges on the local discrete gradient operator $\GT:\UT\to \Poly{k}(T)^d$ such that, for all $\sv=(\unv[T],(\unv[F])_{F\in\Fh[T]})\in\UT$, $\GT\sv$ solves the following problem:
For all $\bphi\in \Poly{k}(T)^d$,
\begin{equation}\label{def:GT.bis}    
  (\GT \sv,\bphi)_T
  \eqbydef -(\unv[T],\DIV\bphi)_T
  + \sum_{F\in\Fh[T]} (\unv[F], \bphi\SCAL\normal_{TF})_F.
\end{equation}
Existence and uniqueness of $\GT\sv$ immediately follow from the Riesz representation theorem in $\Poly{k}(T)^d$ for the standard $L^2(T)^d$-inner product. The right-hand side of~\eqref{def:GT.bis} mimicks an integration by parts formula where the role of the scalar function inside volumetric and boundary integrals is played by element-based and face-based DOFs, respectively.
This recipe for the gradient reconstruction is justified by the commuting property in the following proposition.
\begin{proposition}[Commuting property]
  For all $v\in W^{1,1}(T)$, it holds that
\begin{equation}\label{eq:commut.GT}
  \GT\IT v = \lproj[T]{k} (\GRAD v).
\end{equation}
\end{proposition}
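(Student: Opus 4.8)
The plan is to verify the defining relation \eqref{def:GT.bis} directly with $\sv=\IT v$ on the left-hand side and $\lproj[T]{k}(\GRAD v)$ in place of $\GT\IT v$, exploiting the integration-by-parts structure built into the gradient reconstruction. Fix an arbitrary test function $\bphi\in\Poly{k}(T)^d$. Starting from the right-hand side of \eqref{def:GT.bis} evaluated at $\sv=\IT v=(\lproj[T]{k}v,(\lproj[F]{k}v)_{F\in\Fh[T]})$, I would write
\begin{equation*}
  -(\lproj[T]{k}v,\DIV\bphi)_T + \sum_{F\in\Fh[T]}(\lproj[F]{k}v,\bphi\SCAL\normal_{TF})_F.
\end{equation*}
Since $\DIV\bphi\in\Poly{k-1}(T)\subset\Poly{k}(T)$ and $\bphi\SCAL\normal_{TF}\in\Poly{k}(F)$, the very definition \eqref{def:lproj} of the $L^2$-projectors $\lproj[T]{k}$ and $\lproj[F]{k}$ lets me replace $\lproj[T]{k}v$ by $v$ in the volumetric term and each $\lproj[F]{k}v$ by $v$ in the boundary terms, so the expression equals $-(v,\DIV\bphi)_T+\sum_{F\in\Fh[T]}(v,\bphi\SCAL\normal_{TF})_F$.

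Next I would apply the integration-by-parts formula on $T$: for $v\in W^{1,1}(T)$ and $\bphi\in\Poly{k}(T)^d\subset W^{1,\infty}(T)^d$ (using that $T$ is a polytope whose boundary decomposes as $\partial T=\bigcup_{F\in\Fh[T]}\closure F$ with $\normal_{TF}$ the outward normal on $F$),
\begin{equation*}
  -(v,\DIV\bphi)_T + \sum_{F\in\Fh[T]}(v,\bphi\SCAL\normal_{TF})_F = (\GRAD v,\bphi)_T.
\end{equation*}
Finally, since $\bphi\in\Poly{k}(T)^d$, the definition \eqref{def:lproj} of $\lproj[T]{k}$ applied component-wise gives $(\GRAD v,\bphi)_T=(\lproj[T]{k}(\GRAD v),\bphi)_T$. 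Collecting these identities shows that $(\lproj[T]{k}(\GRAD v),\bphi)_T$ coincides with the right-hand side of \eqref{def:GT.bis} for $\sv=\IT v$, for every $\bphi\in\Poly{k}(T)^d$; by the uniqueness of $\GT\IT v$ (guaranteed by the Riesz representation theorem, as noted after \eqref{def:GT.bis}), we conclude $\GT\IT v=\lproj[T]{k}(\GRAD v)$.

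The argument is essentially a chain of elementary reductions, so there is no genuine obstacle; the only point requiring a little care is the justification of the integration-by-parts formula for a function of the low regularity $W^{1,1}(T)$ against a smooth polynomial vector field on a general polytope, which follows from a standard density argument (approximating $v$ by smooth functions in $W^{1,1}(T)$ and passing to the limit, the boundary term being controlled via the continuity of the trace operator $W^{1,1}(T)\to L^1(\partial T)$) together with the decomposition of $\partial T$ into the faces $F\in\Fh[T]$ provided by Definition~\ref{def:mesh}.
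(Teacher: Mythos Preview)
Your proof is correct and follows essentially the same approach as the paper: plug $\IT v$ into the defining relation \eqref{def:GT.bis}, remove the $L^2$-projectors using that $\DIV\bphi\in\Poly{k}(T)$ and $\restrto{\bphi}{F}\SCAL\normal_{TF}\in\Poly{k}(F)$, integrate by parts, and identify the result with the defining property of $\lproj[T]{k}(\GRAD v)$. The only difference is cosmetic---the paper phrases the conclusion as $\int_T(\GT\IT v-\GRAD v)\SCAL\bphi=0$ and then compares with \eqref{def:lproj}, while you phrase it as verifying that $\lproj[T]{k}(\GRAD v)$ satisfies \eqref{def:GT.bis} and invoke uniqueness---and your added care about justifying integration by parts for $W^{1,1}$ functions is a welcome clarification.
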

\begin{proof}
  Plugging the definition~\eqref{eq:IT} of $\IT$ into~\eqref{def:GT.bis}, it is inferred for all $\bphi\in\Poly{k}(T)^d$ that
  $$
  \begin{aligned}
    (\GT\IT v,\bphi)_T
    &= -(\lproj[T]{k} v,\DIV\bphi)_T
    + \sum_{F\in\Fh[T]} (\lproj[F]{k} v, \bphi\SCAL\normal_{TF})_F
    \\
    &= -(v,\DIV\bphi)_T
    + \sum_{F\in\Fh[T]} (v, \bphi\SCAL\normal_{TF})_F,
  \end{aligned}
  $$
  where, to cancel the projectors in the second line, we have used~\eqref{def:lproj}
  together with the fact that $\DIV\bphi\in\Poly{k-1}(T)\subset\Poly{k}(T)$ and that $\restrto{\bphi}{F}\SCAL\normal_{TF}\in\Poly{k}(F)$ for all $F\in\Fh[T]$.
  Integrating by parts the right-hand side, we conclude that
  $$
  \int_T(\GT\IT v - \GRAD v)\SCAL\bphi = 0\qquad\forall\bphi\in\Poly{k}(T)^d.
  $$
  Comparing with~\eqref{def:lproj} the conclusion follows.
\end{proof}

For further use, we note the following formula inferred from \eqref{def:GT.bis} integrating by parts the first term in the right-hand side:
For all $\sv\in\UT$ and all $\bphi\in\Poly{k}(T)^d$,
\begin{equation}
  \label{def:GT}
  (\GT \sv,\bphi)_T
  = (\GRAD \unv[T],\bphi)_T
  + \sum_{F\in\Fh[T]} (\unv[F]-\unv[T],\bphi\SCAL\normal_{TF})_F.
\end{equation}
We also define the local potential reconstruction operator $\pT:\UT\to\Poly{k+1}(T)$ such that, for all $\sv\in\UT$, 
\begin{equation}\label{eq:pT}
	\begin{aligned}
  &\int_T(\GRAD\pT\sv-\GT\sv)\SCAL\GRAD w=0\mbox{ for all $w\in\Poly{k+1}(T)$,}\\
	&\int_T (\pT\sv-\unv[T]) = 0.
	\end{aligned}
\end{equation}
As already noticed in Ref.~\cite{Di-Pietro.Ern.ea:14} (cf., in particular, Eq. (17) therein), we have the following relation which establishes a link between the potential reconstruction $\pT$ composed with the interpolation operator $\IT$ defined by \eqref{eq:IT} and the elliptic projector $\eproj[T]{k+1}$ defined by \eqref{eq:eproj}:
\begin{equation}\label{eq:pT.eproj}
  \pT\circ\IT=\eproj[T]{k+1}.
\end{equation}

The local gradient and potential reconstructions give rise to the global gradient operator $\Gh:\Uh\to\Poly{k}(\Th)^d$ and potential reconstruction $\ph:\Uh\to\Poly{k+1}(\Th)$ such that, for all $\sv[h]\in\Uh$,
\begin{equation}\label{def:Gh.ph}
  \text{$\restrto{(\Gh\sv[h])}{T}=\GT\sv[T]$ and $\restrto{(\ph\sv[h])}{T}=\pT\sv[T]$ for all $T\in\Th$.}
\end{equation}

\subsubsection{Discrete problem}

For all $T\in\Th$, we define the local function $\ascT:\UT\times\UT\to\Real$ such that
\begin{subequations}\label{def:hho.scheme}
  \begin{equation}
    \label{eq:hho-loc}
    \ascT(\su,\sv)\eqbydef{}\int_T \bfa(\vec{x},\GT\su(\vec{x}))\cdot\GT\sv(\vec{x})\dx + s_T(\su,\sv),
  \end{equation}
  with $s_T:\UT\times\UT\to\Real$ the stabilisation term such that
  \begin{equation}\label{eq:hho-stab}
    s_T(\su,\sv)\eqbydef
    \sum_{F\in\Fh[T]}h_F^{1-p}\int_F \left|\dTF\su\right|^{p-2}\dTF\su~\dTF\sv.
  \end{equation}
  In \eqref{eq:hho-stab}, the scaling factor $h_F^{1-p}$ ensures the dimensional homogeneity of the terms composing $A_T$, and the face-based residual operator $\dTF:\UT\to\Poly{k}(F)$ is defined such that, for all $\sv\in\UT$,
  \begin{equation}\label{eq:dTF}
    \dTF\sv\eqbydef\lproj[F]{k}(\unv[F] - \restrto{(\pT\sv)}{F}) - \restrto{(\lproj[T]{k}(\unv[T] - \pT\sv))}{F}.
  \end{equation}
  A global function $\asch:\Uh\times\Uh\to\Real$ is assembled element-wise from local contributions setting
  \begin{equation}
    \label{eq:hho-assembly}
    \asch(\su[h],\sv[h])\eqbydef\sum_{T\in\Th} \ascT(\su[T],\sv[T]).
  \end{equation}
  Boundary conditions are strongly enforced by considering the following subspace of $\Uh$:
  \begin{equation}\label{def:UhD}
    \UhD\eqbydef\left\{
    \sv[h]\in\Uh\st \unv[F]\equiv 0\quad\forall F\in\Fhb
    \right\}.
  \end{equation}
  The HHO approximation of problem \eqref{pde:plap.weak} reads:
  \begin{equation}
    \label{eq:hho-glob}
    \text{Find $\su[h]\in \UhD$ such that, for all $\sv[h]\in \UhD$, 
      $\asch(\su[h],\sv[h]) = \int_{\Omega} f \unv[h]$.}
  \end{equation}
\end{subequations}
For a discussion on the existence and uniqueness of a solution to \eqref{def:hho.scheme} we refer the reader to Theorem 4.5 and Remark 4.7 in Ref.~\cite{Di-Pietro.Droniou:16}.

\subsection{Error estimates}\label{sec:error.est}

We state in this section an error estimate in terms of the following discrete $W^{1,p}$-seminorm on $\Uh$:
\begin{equation}\label{def:norm.1}
  \norm[1,p,h]{\sv[h]}\eqbydef\left(\sum_{T\in\Th} \norm[1,p,T]{\sv}^p\right)^{\frac1p},
\end{equation}
where, for all $T\in\Th$,
$$
  \norm[1,p,T]{\sv}\eqbydef\left(
  \norm[L^p(T)^{d}]{\GRAD \pT\sv[T]}^p
  + s_T(\sv[T],\sv[T])  \right)^{\frac1p}.
$$
\begin{proposition}[{Norm $\norm[1,p,h]{{\cdot}}$}]
  The map $\norm[1,p,h]{{\cdot}}$ defines a norm on $\UhD$.
\end{proposition}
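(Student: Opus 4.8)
The plan is to verify the three norm axioms on the subspace $\UhD$. Since $\norm[1,p,h]{{\cdot}}$ is manifestly a seminorm on all of $\Uh$ --- homogeneity and the triangle inequality follow from the corresponding properties of the $L^p(T)^d$-norm and from the fact that $s_T(\sv[T],\sv[T])^{1/p}$ is a seminorm on $\UT$ (being, up to the scaling $h_F^{1-p}$, the $\ell^p$-aggregation of the $L^p(F)$-norms of the residuals $\dTF\sv$, each of which is linear in $\sv$) --- the only point requiring argument is definiteness: if $\sv[h]\in\UhD$ satisfies $\norm[1,p,h]{\sv[h]}=0$, then $\sv[h]=0$.

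So assume $\norm[1,p,h]{\sv[h]}=0$. Then for every $T\in\Th$ we have both $\GRAD\pT\sv[T]=0$ on $T$ and $s_T(\sv[T],\sv[T])=0$, the latter forcing $\dTF\sv=0$ for all $F\in\Fh[T]$. From $\GRAD\pT\sv[T]=0$ and the defining relations~\eqref{eq:pT} --- in particular $\int_T(\GRAD\pT\sv-\GT\sv)\SCAL\GRAD w=0$ for all $w\in\Poly{k+1}(T)$, taking $w=\pT\sv$ --- one deduces $\int_T\GT\sv\SCAL\GRAD\pT\sv=0$, hence (again using $\GRAD\pT\sv=0$) $\GT\sv=\GRAD\pT\sv=0$. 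Plugging $\GT\sv[T]=0$ into the alternative expression~\eqref{def:GT} gives
\begin{equation*}
  0=(\GRAD\unv[T],\bphi)_T+\sum_{F\in\Fh[T]}(\unv[F]-\unv[T],\bphi\SCAL\normal_{TF})_F\qquad\forall\bphi\in\Poly{k}(T)^d.
\end{equation*}
Choosing $\bphi=\GRAD\unv[T]\in\Poly{k-1}(T)^d\subset\Poly{k}(T)^d$ and then (after concluding $\GRAD\unv[T]=0$, i.e. $\unv[T]$ is constant on $T$) choosing $\bphi$ supported near a face shows $\unv[F]=\unv[T]$ for every $F\in\Fh[T]$; thus $\unv[T]$ equals a single constant $c_T$ on $T$ and $\unv[F]=c_T$ on each $F\in\Fh[T]$. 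Alternatively, and more cleanly, from $\GT\sv=0$ and $\GRAD\pT\sv=0$ the second line of~\eqref{eq:pT} gives $\pT\sv=\lproj[T]{0}\unv[T]$ constant, so $\pT\sv$ is the constant $c_T\eqbydef\frac1{\meas[d]{T}}\int_T\unv[T]$.

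It remains to propagate these constants across the mesh and kill them using the boundary condition. First, the vanishing of the residual $\dTF\sv=\lproj[F]{k}(\unv[F]-\restrto{(\pT\sv)}{F})-\restrto{(\lproj[T]{k}(\unv[T]-\pT\sv))}{F}$, together with $\pT\sv=c_T$ constant and $\unv[T],\unv[F]$ polynomials of degree $\le k$ on $T$, $F$ respectively (so the projectors act as the identity on $\unv[F]$ and on $\unv[T]$), yields $\unv[F]-c_T=\unv[T]-c_T$ on $F$ for every $F\in\Fh[T]$, hence $\unv[F]=\restrto{\unv[T]}{F}$; combined with $\GRAD\unv[T]=0$ this gives $\unv[T]=c_T$ and $\unv[F]=c_T$. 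Now let $F\in\Fhi$ be an interface shared by $T_1,T_2$: the single face unknown $\unv[F]$ forces $c_{T_1}=c_{T_2}$, so by connectedness of $\Omega$ (equivalently, connectedness of the element adjacency graph, which follows from Definition~\ref{def:mesh}) all the $c_T$ share a common value $c$. Finally, any boundary face $F\in\Fhb$ has $\unv[F]\equiv0$ by the definition~\eqref{def:UhD} of $\UhD$, whence $c=0$; therefore $\unv[T]=0$ for all $T$ and $\unv[F]=0$ for all $F$, i.e. $\sv[h]=0$. I expect the main obstacle to be the careful bookkeeping in the step showing $\GT\sv=0\Rightarrow\unv[F]=\restrto{\unv[T]}{F}$: one must be attentive that $\GRAD\unv[T]\in\Poly{k-1}(T)^d$ is an admissible test function and that the boundary terms associated with distinct faces are independent, which is where the regularity of $\Th$ (Assumption~\ref{def:adm.Th}), ensuring each face carries positive $(d-1)$-measure, is implicitly used.
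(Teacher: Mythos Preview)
Your argument has a genuine gap at the step where you conclude $\GT\sv=0$ from $\GRAD\pT\sv=0$. Equation~\eqref{eq:pT} only says that $\GRAD\pT\sv$ is the $L^2(T)^d$-orthogonal projection of $\GT\sv$ onto $\GRAD\Poly{k+1}(T)$; if that projection vanishes, you learn that $\GT\sv$ is orthogonal to $\GRAD\Poly{k+1}(T)$, not that it is zero. For $k\ge 1$ and $d\ge 2$ the inclusion $\GRAD\Poly{k+1}(T)\subset\Poly{k}(T)^d$ is strict (e.g.\ $(y,-x)\in\Poly{1}(T)^2\setminus\GRAD\Poly{2}(T)$), so $\GT\sv$ may well be nonzero. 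The test $w=\pT\sv$ you propose gives only $0=0$ once $\GRAD\pT\sv=0$. Every subsequent use of $\GT\sv=0$ --- in particular the deduction of $\GRAD\unv[T]=0$ via~\eqref{def:GT} --- therefore rests on an unproved claim. (The remark about ``$\bphi$ supported near a face'' is also imprecise, since $\bphi$ ranges over polynomials.)

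The paper avoids this by invoking the seminorm equivalence~\eqref{eq:equiv.snorms}: $\norm[1,p,T]{\sv}$ is equivalent to $\big(\norm[L^p(T)^d]{\GRAD\unv[T]}^p+\sum_{F\in\Fh[T]}h_F^{1-p}\norm[L^p(F)]{\unv[F]-\unv[T]}^p\big)^{1/p}$, which immediately yields $\GRAD\unv[T]=0$ and $\unv[F]=\restrto{(\unv[T])}{F}$ when $\norm[1,p,h]{\sv[h]}=0$; the propagation argument is then identical to yours. Your direct approach can be repaired by reordering: first use $\GRAD\pT\sv=0$ and $\dTF\sv=0$ (with the projectors acting as identities, exactly as you wrote) to obtain $\unv[F]=\restrto{(\unv[T])}{F}$; plugging this into~\eqref{def:GT} gives $\GT\sv=\GRAD\unv[T]$; finally,~\eqref{eq:pT} with $w=\unv[T]\in\Poly{k}(T)\subset\Poly{k+1}(T)$ reads $\int_T|\GRAD\unv[T]|^2=0$, whence $\unv[T]$ is constant.
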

\begin{proof}
  The semi-norm property is trivial, so it suffices to prove that, for all $\sv[h]\in\UhD$, $\norm[1,p,h]{\sv[h]}=0\implies\sv[h]=\underline{\mathsf{0}}$.
  Let $\sv[h]\in\UhD$ be such that $\norm[1,p,h]{\sv[h]}=0$.
  The semi-norm equivalence proved in Lemma~5.2 of Ref.~\cite{Di-Pietro.Droniou:16} (see also
  \eqref{eq:equiv.snorms} below) shows that
  \[
  \sum_{T\in\Th}\norm[L^p(T)^d]{\GRAD \unv[T]}^p+\sum_{T\in\Th}\sum_{F\in\Fh[T]}
  h_F^{1-p}\norm[L^p(F)]{\unv[F]-\restrto{(\unv[T])}{F}}^p=0.
  \]
  Hence, all $(\unv[T])_{T\in\Th}$ are constant polynomials and, for any $F\in\Fh[T]$,
  $\unv[F]\equiv \restrto{(\unv[T])}{F}$.
  Starting from boundary mesh elements $T\in\Th$ for which there exists $F\in\Fh[T]\cap\Fhb$, using the fact that $\unv[F]\equiv 0$ whenever $F\in\Fhb$, and proceeding from neighbour to neighbour towards the interior of the domain, we infer
  that $\unv[T]\equiv0$ for all $T\in\Th$ and $\unv[F]\equiv 0$ for all $F\in\Fh$.
\end{proof}

The regularity assumptions on the exact solution are expressed in terms of the broken $W^{s,p}$-spaces defined by
\[
W^{s,p}(\Th)\eqbydef\{v\in L^p(\Omega)\,:\,\forall T\in\Th\,,\;v\in W^{s,p}(T)\},
\]
which we endow with the norm
\[
\norm[W^{s,p}(\Th)]{v}\eqbydef\left(\sum_{T\in\Th}\norm[W^{s,p}(T)]{v}^p\right)^{\frac1p}.
\]
Notice that, if $v\in W^{s,p}(\Th)$ for $h=h_1$ and $h=h_2$ with $h_1,h_2\in{\cal H}$, then $\norm[W^{s,p}(\mathcal{T}_{h_1})]{v}=\norm[W^{s,p}(\mathcal{T}_{h_2})]{v}$.
Our main result is summarised in the following theorem, whose proof makes use of the approximation results for the elliptic projector stated in Theorems \ref{thm:Wsp.approx} and \ref{thm:Wsp.approx.trace}; cf. Remark \ref{rem:role.T1.T2} for further insight into their role.

\begin{theorem}[Error estimate]\label{thm:est.error}
  Let the assumptions in \eqref{assum:gen} hold, and let $u$ solve \eqref{pde:plap.weak}.
  Let a polynomial degree $k\ge 0$, a mesh $\Th$, and a set of faces $\Fh$ be fixed, and let $\su[h]$ solve \eqref{def:hho.scheme}.
  Assume the additional regularity $u\in W^{k+2,p}(\Th)$ and $\bfa(\cdot,\GRAD u)\in W^{k+1,p'}(\Th)^d$ (with $p'=\frac{p}{p-1}$), and define the quantity $\Eh(u)$ as follows:
  \begin{subequations}\label{def:Eh}
    \begin{asparaitem}
    \item If $p\ge 2$,
      \begin{equation}\label{def:Eh.2}
        \Eh(u)\eqbydef h^{k+1}\seminorm[W^{k+2,p}(\Th)]{u}
        +h^{\frac{k+1}{p-1}}
        \Big(\seminorm[W^{k+2,p}(\Th)]{u}^{\frac{1}{p-1}}+
        \seminorm[W^{k+1,p'}(\Th)^d]{\bfa(\cdot,\GRAD u)}^{\frac{1}{p-1}}
        \Big);
      \end{equation}
    \item If $p<2$,
      \begin{equation}\label{def:Eh.1}
        \Eh(u)\eqbydef h^{(k+1)(p-1)}\seminorm[W^{k+2,p}(\Th)]{u}^{p-1}+h^{k+1}\seminorm[W^{k+1,p'}(\Th)^d]{\bfa(\cdot,\GRAD u)}.
      \end{equation}
    \end{asparaitem}
  \end{subequations}
  Then, there exists a real number $C>0$ depending only on $\Omega$, $k$, the mesh regularity parameter $\varrho$ defined in Assumption \ref{def:adm.Th}, the coefficients $p$, $\upa$, $\coera$, $\lipa$, $\mona$ defined in \eqref{assum:gen}, and an upper bound of $\norm[L^{p'}(\Omega)]{f}$ such that
  \begin{equation}\label{eq:est.error}
    \norm[1,p,h]{\Ih u-\su[h]}\le C\Eh(u).
  \end{equation}
  \label{th:error.est}
\end{theorem}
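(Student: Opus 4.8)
The strategy is the now-standard \emph{third Strang lemma} type argument for HHO methods, adapted to the nonlinear (Leray--Lions) setting. Set $\suh = \Ih u$ (the interpolate of the exact solution) and let $\sv[h]\in\UhD$ be arbitrary. The starting point is to write the \emph{consistency error} as the linear functional
\[
\cE[h](\sv[h]) \eqbydef \int_\Omega f\,\unv[h] - \asch(\suh,\sv[h]),
\]
and to show that $\norm[1,p,h]{\Ih u - \su[h]}$ is controlled by a suitable dual norm of $\cE[h]$. This step uses the strong monotonicity of $\asch$ coming from \eqref{hyp:am}: testing \eqref{eq:hho-glob} with $\sv[h]=\suh-\su[h]$, subtracting, and using the coercivity/monotonicity inequalities for Leray--Lions operators (collected in \ref{app:llop}) on both the volumetric term $\int_T\bfa(\cdot,\GT\cdot)\cdot\GT\cdot$ and the $p$-power stabilisation $s_T$, one derives an inequality of the form $\norm[1,p,h]{\suh-\su[h]}^{\max(p,2)} \lesssim \cE[h](\suh-\su[h])$, possibly with a term $\norm[1,p,h]{\suh-\su[h]}^{\text{something}}$ on the right that gets absorbed. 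Because $p$ can be below or above $2$, this algebra is where the two different exponents $\frac{k+1}{p-1}$ and $(k+1)(p-1)$ in \eqref{def:Eh} originate: it is a Young-inequality bookkeeping exercise with the monotonicity exponent $2$ versus the coercivity exponent $p$.

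The heart of the matter is then bounding $|\cE[h](\sv[h])|$ by $(\text{something}(u))\,\norm[1,p,h]{\sv[h]}$. Here one inserts the exact equation \eqref{pde:plap.weak}: since $-\div(\bfa(\cdot,\GRAD u))=f$, one has $\int_\Omega f\,\unv[h] = \sum_T\int_T \bfa(\cdot,\GRAD u)\cdot\GRAD(\text{something}) + \text{boundary terms}$, and one plays the commuting property \eqref{eq:commut.GT} and the key identity \eqref{eq:pT.eproj} (i.e.\ $\pT\circ\IT=\eproj[T]{k+1}$) against $\GT\IT u = \lproj[T]{k}\GRAD u$. After an element-wise integration by parts, the consistency error splits into: (a) a term measuring $\bfa(\cdot,\GRAD u) - \bfa(\cdot,\GT\suh)$ tested against $\GT\sv[T]$, controlled via the Lipschitz bound \eqref{hyp:alip} and a Hölder inequality in the conjugate exponents $p,p'$, where $\GRAD u - \GT\IT u = \GRAD u - \lproj[T]{k}\GRAD u$ is estimated by the $W^{s,p}$-approximation of the $L^2$-projector (Lemma \ref{lem:Wkp.interp}); (b) boundary/consistency remainders involving $\bfa(\cdot,\GRAD u) - \lproj[F]{k}(\cdots)$, handled with the \emph{trace} approximation estimate \eqref{eq:Wsp.approx.trace} applied to $\bfa(\cdot,\GRAD u)\in W^{k+1,p'}(\Th)^d$; and (c) the stabilisation consistency $s_T(\suh,\sv[T])$, where $\dTF\suh$ is expressed through the residual operator \eqref{eq:dTF} applied to $\IT u$, which by \eqref{eq:pT.eproj} involves $u-\eproj[T]{k+1}u$ and its traces — precisely the quantities that Theorems \ref{thm:Wsp.approx} and \ref{thm:Wsp.approx.trace} bound by $h_T^{k+1}\seminorm[W^{k+2,p}(T)]{u}$ (and its face version with the extra $h_T^{1/p}$). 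Summing over $T$, using the scaling $h_F^{1-p}$ in $s_T$ and the mesh-regularity comparability $h_F\simeq h_T$, and collecting powers of $h$ gives $|\cE[h](\sv[h])|\lesssim \widetilde{\cE}_h(u)\,\norm[1,p,h]{\sv[h]}$ with $\widetilde{\cE}_h(u)$ built from $h^{k+1}\seminorm[W^{k+2,p}(\Th)]{u}$ and $h^{k+1}\seminorm[W^{k+1,p'}(\Th)^d]{\bfa(\cdot,\GRAD u)}$.

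Finally, combining the stability estimate with the consistency bound yields $\norm[1,p,h]{\suh-\su[h]}^{\max(p,2)}\lesssim \widetilde{\cE}_h(u)\,\norm[1,p,h]{\suh-\su[h]}$, and solving this scalar inequality (raising to the power $\frac1{p-1}$ when $p\ge2$, or to the power $p-1$ when $p<2$) produces exactly the two cases of $\Eh(u)$ in \eqref{def:Eh}; the dependence of $C$ on an upper bound of $\norm[L^{p'}(\Omega)]{f}$ enters because, in the $p<2$ case, the term that must be absorbed carries a factor controlled by $\norm[1,p,h]{\su[h]}$, which a priori energy estimates bound in terms of $\norm[L^{p'}(\Omega)]{f}$ via the discrete Sobolev inequality and coercivity \eqref{hyp:ac}. \textbf{The main obstacle} I anticipate is precisely this nonlinear bookkeeping: carefully tracking how the Lipschitz exponent in \eqref{hyp:alip} (which is $1$, multiplied by the factor $|\vec\xi|^{p-2}+|\vec\eta|^{p-2}$) and the monotonicity exponent in \eqref{hyp:am} interact through Hölder and Young inequalities to give matching powers of $h$ on both sides, and ensuring every term involving $\bfa(\cdot,\GRAD u)$ is measured in $W^{k+1,p'}$ (not $W^{k+1,p}$) so that Theorem \ref{thm:Wsp.approx.trace} applies with the correct Lebesgue exponent. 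The geometric inputs (star-shapedness of elements, simplicial submesh) are guaranteed by Assumption \ref{def:adm.Th}, so the approximation theorems can be invoked elementwise without further ado.
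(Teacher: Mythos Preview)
Your plan is essentially the paper's proof: the same consistency functional, the same four-way split of the error (Lipschitz term for $\bfa(\cdot,\GT\shu[h])-\bfa(\cdot,\GRAD u)$, volume and trace approximation of $\bfa(\cdot,\GRAD u)$ by $\lproj[T]{k}$, and the stabilisation residual $s_T(\shu[T],\cdot)$), and the same monotonicity lower bound yielding $\norm[1,p,h]{\se[h]}^{\max(p,2)}\lesssim\cE(\se[h])$.

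One point of bookkeeping is off and would produce the wrong $\Eh(u)$ in the case $p<2$. You locate the $(p-1)$ exponent at the very end, ``raising to the power $p-1$'' when solving the scalar inequality. But for $p<2$ the monotonicity step gives $\norm[1,p,h]{\se[h]}^{2}\lesssim\cE(\se[h])$ (the a~priori bounds on $\norm[1,p,h]{\shu[h]}$ and $\norm[1,p,h]{\su[h]}$ via $\norm[L^{p'}]{f}$ absorb the extra factor), so after dividing you get $\norm[1,p,h]{\se[h]}\lesssim\sum_i\term_i$ with \emph{no} additional power. The $(p-1)$ exponent instead appears \emph{upstream}, when estimating your term (a): the raw Lipschitz bound \eqref{hyp:alip} carries the singular factor $|\vec\xi|^{p-2}+|\vec\eta|^{p-2}$, and one has to pass through the pointwise inequality $|\bfa(\vec x,\vec\xi)-\bfa(\vec x,\vec\eta)|\lesssim|\vec\xi-\vec\eta|^{p-1}$ (Lemma~\ref{lem:tech.1}), which after integration yields $\term_1\lesssim h^{(k+1)(p-1)}\seminorm[W^{k+2,p}(\Th)]{u}^{p-1}$. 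The stabilisation term $\term_4$ acquires the same $(p-1)$ power via the H\"older split $s_T(\shu,\sv)\le s_T(\shu,\shu)^{1/p'}s_T(\sv,\sv)^{1/p}$. Crucially, the flux-approximation terms $\term_2,\term_3$ (your (b)) keep the full $h^{k+1}$ power, which is why $\Eh(u)$ in \eqref{def:Eh.1} mixes the two rates. If you applied a global $(p-1)$ power at the end as you propose, you would degrade the $\bfa$-contribution from $h^{k+1}$ to $h^{(k+1)(p-1)}$ and miss the stated estimate. (Minor: the trace estimate you need for (b) is the $L^2$-projector version \eqref{eq:approx.lproj.Wsp.trace}, not \eqref{eq:Wsp.approx.trace}; the elliptic projector only enters in (c) through $\pT\IT u=\eproj[T]{k+1}u$.)
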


\begin{proof}
  See Section \ref{sec:appl:proofs:error.est}.
\end{proof}

Some remarks are of order.
\begin{remark}[Orders of convergence]\label{rem:order.of.conv}
  From \eqref{eq:est.error}, it is inferred that the approximation error in the discrete $W^{1,p}$-norm scales as the dominant terms in $\Eh$, namely
  \begin{alignat}{2}
    \label{order1}
      &h^{\frac{k+1}{p-1}} &\qquad& \text{if $p\ge 2$},
      \\
      \label{order2}
      &h^{(k+1)(p-1)} &\qquad& \text{if $1<p<2$.}
    \end{alignat}
    Let us discuss how these orders compare with some known results
    for $\Poly{1}$ approximations of the $p$-Laplacian, starting from conforming approximations.
    In Theorem 5.3.5 of Ref.~\cite{Ciarlet:91}, an order $h^{\frac1{p-1}}$ is established in
    the case $p\ge 2$, which is identical to \eqref{order1} with $k=0$.
    The case $1<p<2$ is considered in Ref.~\cite{Glowinski.Marrocco:75}, and
    an estimate in $h^{\frac1{3-p}}$ is proved. This order
    is better than \eqref{order2} for $k=0$, but the proof relies
    on the fact that the $\Poly{1}$ finite element method is a \emph{conforming}
    method (see Eq. (6.5) in this reference). These latter
    rates are improved to order $h$ in Ref.~\cite{Barrett.Liu:94},
    but under a $C^{2,\alpha}$ regularity assumption on $u$.
    The case $p\ge 2$ is also considered in Ref.~\cite{Barrett.Liu:94},
    and an order $h$ estimate is obtained in $W^{1,q}$ for some
    $q<2$ (which is weaker than \eqref{eq:est.error}),
    under a strictly positive lower bound on $|f|$ (and, thus, on $|\nabla u|$).
    All these analyses strongly use the conformity of the $\Poly{1}$ element.
    
    Let us now consider nonconforming approximations, to which the HHO method proposed in this work belongs.
    The rates of convergence established in Ref.~\cite{and-07-dis} for the DDFV
    method are identical to \eqref{order1}--\eqref{order2} for $k=0$.
    Similar considerations apply to the Crouzeix--Raviart approximation considered in Ref.~\cite{Liu.Yan:01}, which is strongly linked to our HHO methods for $k=0$ on matching simplicial meshes\cite{Boffi.Di-Pietro:16}.

    The numerical tests of Section~\ref{sec:num.ex} seem to indicate that our estimates are sharp at least for the case $1<p\le 2$.
    This suggests, in turn, that the order of convergence depends on the polynomial degree $k$, on the index $p$, and on the conformity properties of the method.
    We emphasize that, despite the reduced order of convergence with respect to the best estimate for conforming $\Poly{1}$ schemes and $1<p<2$, the HHO method proposed here has the key advantage of supporting general meshes, as well as arbitrary orders of approximation.
    
\end{remark}

\begin{remark}[Role of the various terms] There is a nice parallel between the various error terms
in \eqref{def:Eh} and the error estimate obtained for the gradient discretisation method in Ref.~\cite{gdm}.
In the framework of the gradient discretisation method\cite{Eymard.Guichard.ea:12,Droniou.Eymard.ea:13}, the accuracy of a scheme is essentially assessed through two quantities: a measure $W_{\mathcal D}$ of the default of conformity of the scheme,
and a measure $S_{\mathcal D}$ of the consistency of the scheme.
In \eqref{def:Eh}, the terms involving $\seminorm[W^{k+1,p'}(\Th)^d]{\bfa(\cdot,\GRAD u)}$ estimate the contribution to the error of the default of conformity of the method, and the terms involving $\seminorm[W^{k+2,p}(\Th)]{u}$
come from the consistency error of the method.
\end{remark}

From the convergence result in Theorem \ref{thm:est.error}, we can infer an error estimate on the potential reconstruction $\ph\su[h]$ and on its jumps measured through the stabilisation function $s_T$.
\begin{corollary}[Convergence of the potential reconstruction]\label{cor:error.int}
  Under the notations and assumptions in Theorem \ref{th:error.est}, and denoting by $\GRADh$ the broken gradient on $\Th$, we have
  \begin{multline}\label{eq:est.error:pT}
    \left(
    \norm[L^p(\Omega)^d]{\GRADh (u-\ph\su[h])}^p
    +\sum_{T\in\Th}s_T(\su,\su)
    \right)^{\frac1p}\\
    \le C \left(\Eh(u)  
    + h^{k+1}\seminorm[W^{k+2,p}(\Th)]{u}
    \right),
  \end{multline}
where $C$ has the same dependencies as in Theorem \ref{th:error.est}.
\end{corollary}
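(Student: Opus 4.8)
The plan is to reduce the statement to two facts already available: the discrete error estimate of Theorem~\ref{thm:est.error} and the $W^{1,p}$-approximation estimate for the elliptic projector of Theorem~\ref{thm:Wsp.approx}, the bridge between them being the identity $\pT\circ\IT=\eproj[T]{k+1}$ from \eqref{eq:pT.eproj}. The latter implies that, on each $T\in\Th$, the restriction of $\ph\Ih u$ equals $\eproj[T]{k+1}u$. First I would write $u-\ph\su[h]=(u-\ph\Ih u)+\ph(\Ih u-\su[h])$, so that the broken gradient of the first summand is $\GRAD(u-\eproj[T]{k+1}u)$ on each $T$, while the second summand lies in $\ph\Uh$.

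For the first summand, Theorem~\ref{thm:Wsp.approx} applied on each $T$ with $l=k+1$, $s=k+2$ and $m=1$ (legitimate since mesh elements satisfy the star-shapedness hypothesis under Assumption~\ref{def:adm.Th}) gives $\norm[L^p(T)^d]{\GRAD(u-\eproj[T]{k+1}u)}\lesssim h_T^{k+1}\seminorm[W^{k+2,p}(T)]{u}$; raising to the power $p$ and summing over $\Th$ yields the contribution $h^{k+1}\seminorm[W^{k+2,p}(\Th)]{u}$ in \eqref{eq:est.error:pT}. For the second summand, the very definition of $\norm[1,p,T]{{\cdot}}$, hence of $\norm[1,p,h]{{\cdot}}$, gives $\norm[L^p(T)^d]{\GRAD\pT\sv[T]}^p\le\norm[1,p,T]{\sv}^p$ for every $\sv[h]\in\Uh$, so summation followed by Theorem~\ref{thm:est.error} gives $\norm[L^p(\Omega)^d]{\GRADh\ph(\Ih u-\su[h])}\le\norm[1,p,h]{\Ih u-\su[h]}\le C\Eh(u)$. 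Together these control the gradient part of \eqref{eq:est.error:pT}.

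It remains to handle $\big(\sum_{T\in\Th}s_T(\su,\su)\big)^{1/p}$. Since $\su=\su[T]$ is the restriction of $\su[h]$ to $T$, $\IT u$ the restriction of $\Ih u$, and $\sv[h]\mapsto\big(\sum_{T\in\Th}\sum_{F\in\Fh[T]}h_F^{1-p}\norm[L^p(F)]{\dTF\sv[T]}^p\big)^{1/p}$ is a seminorm on $\Uh$ (because $\dTF$ is linear, by Minkowski's inequality), the triangle inequality splits this quantity into $\big(\sum_T s_T(\IT u,\IT u)\big)^{1/p}$ and $\big(\sum_T s_T(\IT u-\su[T],\IT u-\su[T])\big)^{1/p}$. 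The second piece is again $\le\norm[1,p,h]{\Ih u-\su[h]}\le C\Eh(u)$, because $s_T$ enters $\norm[1,p,T]{{\cdot}}^p$. For the first piece I would prove the consistency estimate for the stabilisation, $\sum_T s_T(\IT u,\IT u)\lesssim h^{p(k+1)}\seminorm[W^{k+2,p}(\Th)]{u}^p$: inserting \eqref{eq:dTF}, using $\pT\IT u=\eproj[T]{k+1}u$ and idempotency of the $L^2$-projectors one rewrites $\dTF\IT u=\lproj[F]{k}\restrto{(u-\eproj[T]{k+1}u)}{F}-\restrto{(\lproj[T]{k}(u-\eproj[T]{k+1}u))}{F}$; one then applies the $L^p$-boundedness of $\lproj[F]{k}$ and $\lproj[T]{k}$ (Lemma~\ref{lem:Lpstab.gradproj} with $\mathcal{P}=\Poly{k}(F)$, resp. $\Poly{k}(T)$), a discrete trace inequality on $T$, and Theorems~\ref{thm:Wsp.approx} and~\ref{thm:Wsp.approx.trace} with $m=0$, $s=k+2$, so that each face term scales as $h_F^{1-p}h_T^{p(k+2)-1}\seminorm[W^{k+2,p}(T)]{u}^p$. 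Absorbing the negative power $h_F^{1-p}$ via $h_F\gtrsim h_T$ (a consequence of Assumption~\ref{def:adm.Th}) turns this into $h_T^{p(k+1)}\seminorm[W^{k+2,p}(T)]{u}^p$, and summing over faces and elements gives the claim; this bound is in any event exactly the one already established inside the proof of Theorem~\ref{thm:est.error}. Collecting the three estimates and using $(a^p+b^p)^{1/p}\le a+b$ yields \eqref{eq:est.error:pT}.

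The only genuinely technical point is the consistency estimate for the stabilisation, and inside it the bookkeeping of face/element diameter scalings so that the $h_F^{1-p}$ weight combines with the $O(h_T^{k+2-1/p})$ boundary approximation error of $\eproj[T]{k+1}$ provided by Theorem~\ref{thm:Wsp.approx.trace} to leave exactly $O(h_T^{k+1})$; all remaining steps are routine manipulations with the triangle inequality, Theorem~\ref{thm:est.error}, and Theorem~\ref{thm:Wsp.approx}.
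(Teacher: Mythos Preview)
Your proposal is correct and follows essentially the same route as the paper: decompose both the gradient term and the stabilisation term by inserting $\pT\IT u=\eproj[T]{k+1}u$, then control the pieces via Theorem~\ref{thm:Wsp.approx}, the stabilisation consistency bound \eqref{eq:est.sT.shu} (which you reprove), and Theorem~\ref{thm:est.error}. The only cosmetic differences are that the paper works with $p$-th powers and the inequality $(a+b)^p\le 2^{p-1}(a^p+b^p)$ rather than Minkowski, and that for the $\lproj[T]{k}$-piece in $\dTF\IT u$ it uses the continuous trace inequality \eqref{ineq.cont.trace} together with the $W^{1,p}$-boundedness of $\lproj[T]{k}$ instead of a discrete trace inequality; both choices lead to the same $h_T^{k+2-1/p}$ scaling.
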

\begin{proof}
  See Section \ref{sec:appl:proofs:error.int}.
\end{proof}%

  \begin{remark}[Variations]
    Following Remark 4.4 in Ref.~\cite{Di-Pietro.Droniou:16}, variations of the HHO scheme \eqref{def:hho.scheme} are obtained replacing the space $\UT$ defined by \eqref{eq:UT} by
    $$
    \UT[l,k]\eqbydef
    \Poly{l}(T)\times\left(
    \bigtimes_{F\in\Fh}\Poly{k}(F)
    \right),
    $$
    for $k\ge 0$ and $l\in\{k-1,k,k+1\}$.
    For the sake of simplicity, we consider the case $l=k-1$ only when $k\ge 1$ (technical modifications, not detailed here, are required for $k=0$ and $l=k-1$ owing to the absence of element DOFs).
    The interpolant $\IT$ naturally has to be replaced with $\IT[l,k]v\eqbydef (\lproj[T]{l}v, (\lproj[F]{k}v)_{F\in\Fh[T]})$.
    The definitions \eqref{def:GT.bis} of $\GT$ and \eqref{eq:pT} of $\pT$ remain formally the same (only the domain of the operators changes), and a close inspection shows that both key properties~\eqref{eq:commut.GT} and~\eqref{eq:pT.eproj} remain valid for all the proposed choices for $l$ (replacing, of course, $\IT$ with $\IT[l,k]$ in both cases).
    In the expression \eqref{eq:hho-stab} of the penalization bilinear form $s_T$, we replace the face-based residual $\dTF$ defined by \eqref{eq:dTF} with a new operator $\dTF[l,k]:\UT[l,k]\to\Poly{k}(F)$ such that, for all $\sv\in\UT[l,k]$,
    $
    \dTF[l,k]\sv\eqbydef\lproj[F]{k}[\unv[F]-
    \restrto{(\pT\sv)}{F} - 
    \restrto{(\lproj[T]{l}(\unv[T]-\pT\sv))}{F} ].
    $
    Up to minor modifications, the proof of Theorem \ref{thm:est.error} remains valid, and therefore so is the case for the error estimates \eqref{eq:est.error} and \eqref{eq:est.error:pT}.
  \end{remark}%

\subsection{Proof of the error estimates}\label{sec:appl:proofs}

In this section, we write $A\lesssim B$ for $A\le MB$ with $M$ having the same dependencies
as $C$ in Theorem \ref{th:error.est}. The notation $A\approx B$ means $A\lesssim B$ and $B\lesssim A$.

\subsubsection{Proof of Theorem \ref{th:error.est}}\label{sec:appl:proofs:error.est}
  The proof is split into several steps.
  In {\bf Step 1} we obtain an initial estimate involving, on the left-hand side, $\bfa$ and $s_T$, and, on the right-hand side, a sum of four terms.
  In {\bf Step 2} we prove that the left-hand side of this estimate provides an upper bound of the approximation error $\norm[1,p,h]{\Ih u-\su[h]}$.
  Then, in {\bf Steps 3--5}, we estimate each of the four terms in the right-hand side of the original estimate.
  Combined with the result of {\bf Step 2}, these estimates prove \eqref{eq:est.error}.

  Throughout the proof, to alleviate the notation, we write $\mathcal O(X)$ for a quantity that satisfies $|\mathcal O(X)|\lesssim X$, and we abridge $\Ih u$ into $\shu[h]$.

  We will need the following equivalence of local seminorms, established in Lemma 5.2 of Ref.~\cite{Di-Pietro.Droniou:16}:
  For all $\sv\in \UT$,
  \begin{equation}\label{eq:equiv.snorms}
    \begin{aligned}
    \norm[1,p,T]{\sv}
      &\approx \bigg(\norm[L^p(T)^d]{\GRAD \unv[T]}^p+ \hspace{-1ex}\sum_{F\in\Fh[T]} h_F^{1-p}\norm[L^p(F)]{\unv[F]-\unv[T]}^p\bigg)^{\frac1p} \\
      &\approx \bigg(\norm[L^p(T)^d]{\GT\sv[T]}^p + s_T(\sv[T],\sv[T])\bigg)^{\frac1p}.
    \end{aligned}
  \end{equation}

  \begin{asparaenum}[\bf Step 1.]
    
  \item\emph{Initial estimate.}
    Let $\sv[h]$ be a generic element of $\UhD$, and denote by $\sv\in\UT$ its restriction to a generic mesh element $T\in\Th$.
    In this step, we estimate the error made when using $\shu[h]$, instead of $\su[h]$, in the scheme, namely
    \begin{equation}\label{eq:cEh}
			\begin{aligned}
      \cE(\sv[h])
      \eqbydef{}&\sum_{T\in\Th} \int_T \left[\bfa(\vec{x},\GT\shu[T])-\bfa(\vec{x},\GT\su[T])\right]\SCAL\GT\sv[T]\\
      {}&+\sum_{T\in\Th} (s_T(\shu[T],\sv[T])-s_T(\su[T],\sv[T])).
			\end{aligned}
    \end{equation}

    Let $T\in\Th$ be fixed. Setting
    \begin{equation}\label{eq:def.T1T}
      \term_{1,T}\eqbydef\norm[L^{p'}(T)^d]{\bfa(\cdot,\GT\shu[T])-\bfa(\cdot,\GRAD u)},
    \end{equation}
    by the H\"older inequality we infer
    \begin{multline*}
    \int_T \bfa(\vec{x},\GT\shu[T](\vec{x}))\SCAL\GT\sv[T](\vec{x})\dx
      \\
    =\int_T \bfa(\vec{x},\GRAD u(\vec{x}))\SCAL \GT\sv[T](\vec{x})\dx + \mathcal O(\term_{1,T})\norm[L^p(T)^d]{\GT\sv[T]}.
    \end{multline*}
    To benefit from the definition \eqref{def:GT.bis} of $\GT\sv[T]$, we approximate $\bfa(\cdot,\GRAD u)$ by its $L^2$-orthogonal projetion on the polynomial space $\Poly{k}(T)^d$.
    We therefore introduce
    \begin{equation}\label{eq:def.T2T}
      \term_{2,T}\eqbydef\norm[L^{p'}(T)^d]{\bfa(\cdot,\GRAD u)-\lproj[T]{k}\bfa(\cdot,\GRAD u)},
    \end{equation}
    and we have
    \begin{multline}\label{est.error:1}
      \int_T \bfa(\vec{x},\GT\shu[T](\vec{x}))\SCAL\GT\sv[T](\vec{x})\dx
      =
      \\
      \int_T \lproj[T]{k}\bfa(\vec{x},\GRAD u(\vec{x}))\SCAL \GT\sv[T](\vec{x})\dx
      +\mathcal O(\term_{1,T}+\term_{2,T})\norm[L^p(T)^d]{\GT\sv[T]}.
    \end{multline}
    Using \eqref{def:GT} with $\bphi = \lproj[T]{k}\bfa(\cdot,\GRAD u)$, the first term in the right-hand side rewrites
    \begin{multline*}
    \int_T \lproj[T]{k}\bfa(\vec{x},\GRAD u(\vec{x}))\SCAL \GT\sv[T](\vec{x})\dx
      \\
    = (\lproj[T]{k}\bfa(\cdot,\GRAD u),\GRAD \unv[T])_T + \sum_{F\in\Fh[T]}(\lproj[T]{k}\bfa(\cdot,\GRAD u)\SCAL	
    \normal_{TF},\unv[F]-\unv[T])_F.
    \end{multline*}
    We now want to eliminate the projectors $\lproj[T]{k}$, in order to utilise the fact that $u$ is a solution
    to \eqref{pde:plap}.
    In the first term, the projector $\lproj[T]{k}$ can be cancelled simply by observing that $\GRAD\unv[T]\in\Poly{k-1}(T)^d\subset\Poly{k}(T)^d$, whereas for the second term we introduce an error that we want to control by
    \begin{equation}\label{def:T3T}
      \term_{3,T}\eqbydef\left(\sum_{F\in\Fh[T]}h_F\norm[L^{p'}(F)]{\bfa(\cdot,\GRAD u)-\lproj[T]{k}\bfa(\cdot,\GRAD u)}^{p'}\right)^{\frac{1}{p'}}
    \end{equation}
    (this quantity is well defined since $\bfa(\cdot,\GRAD u)\in W^{1,p'}(T)^d$ by assumption).
    We therefore have, using the H\"older inequality,
    \begin{align*}
      \int_T \lproj[T]{k}\bfa(\vec{x},\GRAD u(\vec{x}))\SCAL \GT\sv[T](\vec{x})\dx
      ={}&
      (\bfa(\cdot,\GRAD u),\GRAD \unv[T])_T\\
      {}&+\sum_{F\in\Fh[T]}(\bfa(\cdot,\GRAD u)\SCAL\normal_{TF},\unv[F]-\unv[T])_F
      \\
      &+ \mathcal O(\term_{3,T})\left(\sum_{F\in\Fh[T]}h_F^{1-p}\norm[L^p(F)]{\unv[F]-\unv[T]}^p \right)^{\frac{1}{p}}.
    \end{align*}
    We plug this expression into \eqref{est.error:1} and use the equivalence of seminorms \eqref{eq:equiv.snorms} to obtain
    \begin{align*}
      \int_T \bfa(\vec{x},\GT\shu[T](\vec{x}))\SCAL\GT\sv[T](\vec{x})\dx
      ={}& (\bfa(\cdot,\GRAD u),\GRAD \unv[T])_T +\hspace{-1.5ex} \sum_{F\in\Fh[T]}(\bfa(\cdot,\GRAD u)\SCAL
      \normal_{TF},\unv[F]-\unv[T])_F\\
      &+\mathcal O(\term_{1,T}+\term_{2,T}+\term_{3,T})\norm[1,p,T]{\sv[T]}.
    \end{align*}
    Integrating by parts the first term in the right-hand side and writing $-\div(\bfa(\cdot,\GRAD u))=f$ in $T$, we arrive at
    \begin{multline*}
      \int_T \bfa(\vec{x},\GT\shu[T](\vec{x}))\SCAL\GT\sv[T](\vec{x})\dx
      =\\
      (f,\unv[T])_T + \sum_{F\in\Fh[T]}(\bfa(\cdot,\GRAD u)\SCAL \normal_{TF},\unv[F])_F
      +\mathcal O(\term_{1,T}+\term_{2,T}+\term_{3,T})\norm[1,p,T]{\sv[T]}.
    \end{multline*}
    We then sum over $T\in\Th$, use $\bfa(\cdot,\GRAD u)\SCAL\normal_{T_1F}=-\bfa(\cdot,\GRAD u)\SCAL\normal_{T_2F}$ on every interface $F\in\Fhi$ such that $F\in \Fh[T_1]\cap \Fh[T_2]$ for distinct mesh elements $T_1,T_2\in\Th$ (this is because $-\div(\bfa(\cdot,\GRAD u))\in L^{p'}(\Omega)$) together with $\unv[F]=0$ for every boundary face $F\in\Fhb$ to infer
    $$
    \sum_{T\in\Th}\sum_{F\in\Fh[T]}(\bfa(\cdot,\GRAD u)\SCAL\normal_{TF},\unv[F])_F = 0,
    $$
    invoke the scheme \eqref{def:hho.scheme}, and use the H\"older inequality on the $\mathcal O$ terms to write
    \begin{multline*}
    \sum_{T\in\Th} \int_T \left[\bfa(\vec{x},\GT\shu[T](\vec{x}))-\bfa(\vec{x},\GT\su[T](\vec{x}))\right]\SCAL\GT\sv[T](\vec{x})\dx
    -\sum_{T\in\Th} s_T(\su[T],\sv[T])
    \\
    = \mathcal O(\term_{1}+\term_{2}+\term_{3})\norm[1,p,h]{\sv[h]}
    \end{multline*}
    where, for $i\in\{1,2,3\}$, we have set
    \begin{equation}\label{eq:term.i}
      \term_{i}\eqbydef\left(\sum_{T\in\Th}\term_{i,T}^{p'}\right)^{\frac{1}{p'}}.
    \end{equation}
    Finally, introducing the last error term
    \begin{equation}\label{eq:def.T4}
      \term_4\eqbydef\sup_{\sv[h]\in\Uh,\sv[h]\neq\underline{\mathsf{0}}_h} \frac{\sum_{T\in\Th}s_T(\shu[T],\sv[T])}{\norm[1,p,h]{\sv[h]}},
    \end{equation}
    we have
    \begin{equation}
      \cE(\sv[h])
      = \mathcal O(\term_{1}+\term_{2}+\term_{3}+\term_4)\norm[1,p,h]{\sv[h]}.
      \label{initial.est}
    \end{equation}

    \medskip

  \item\emph{Lower bound for $\cE(\shu[h]-\su[h])$.}

    Let, for the sake of conciseness, $\se[h]\eqbydef\shu[h]-\su[h]$.
    The goal of this step is to find a lower bound for $\cE(\se[h])$ in terms of the error measure $\norm[1,p,h]{\se[h]}$.
    To this end, we let $\sv[h]=\se[h]$ in the definition \eqref{eq:cEh} of $\cE$ and distinguish two cases.

    \underline{Case $p\ge 2$}: Using for all $T\in\Th$ the bound \eqref{amon:p2p} below with $\vec{\xi}=\GT\shu[T]$ and $\vec{\eta}=\GT\su$ for the first term in the right-hand side of \eqref{eq:cEh}, the definition \eqref{eq:hho-stab} of $s_T$ and, for all $F\in\Fh[T]$, the bound \eqref{eq:mon.sT.2} below with $t=\dTF\shu[T]$ and $r=\dTF\su$ for the second, and concluding by the norm equivalence \eqref{eq:equiv.snorms}, we have
    \begin{equation}
      \cE(\se[h])\gtrsim{} \sum_{T\in\Th} \left(\norm[L^p(T)^d]{\GT\se[T]}^p+
      \sum_{F\in\Fh[T]} h_F^{1-p}\norm[L^p(F)]{\dTF\se[T]}^p\right)\gtrsim{}\norm[1,p,h]{\se[h]}^p.
      \label{est:Ah.1}
    \end{equation}

    \smallskip

    \underline{Case $p<2$}: Let an element $T\in\Th$ be fixed.
    Applying \eqref{amon:p2m} below to $\vec{\xi}=\GT\shu[T]$ and $\vec{\eta}=\GT\su[T]$, integrating over $T$ and using the H\"older inequality with exponents $\frac{2}{p}$ and $\frac{2}{2-p}$, we get
    \begin{multline*}
    \norm[L^p(T)^d]{\GT\se[T]}^p
    \lesssim
    \left(
    \int_T [\bfa(\vec{x},\GT\shu[T](\vec{x}))-\bfa(\vec{x},\GT\su[T](\vec{x}))]\SCAL\GT\se[T](\vec{x})\dx
    \right)^{\frac{p}{2}}\hspace{-1ex}
    \\
    \times \left(\norm[L^p(T)^d]{\GT\shu[T]}^p
    + \norm[L^p(T)^d]{\GT\su[T]}^p\right)^{\frac{2-p}{2}}.
    \end{multline*}

    Summing over $T\in\Th$ and using the discrete H\"older inequality, we obtain
    \begin{equation}\label{est:Ah.2}
      \norm[L^p(\Omega)^d]{\Gh\se[h]}^p
      \lesssim\cE(\se[h])^{\frac{p}{2}}
      \times
      \left(\norm[L^p(\Omega)^d]{\Gh\shu[h]}^p+\norm[L^p(\Omega)^d]{\Gh\su[h]}^p\right)^{\frac{2-p}{2}}.
    \end{equation}
    A similar reasoning starting from \eqref{eq:mon.sT.1} with $t=h_F^{\frac{1-p}{p}}\dTF\shu[T]$ and $r=h_F^{\frac{1-p}{p}}\dTF\su[T]$, integrating over $F$, summing over $F\in\Fh[T]$ and using the H\"older inequality gives
    \[
    s_T(\se[T],\se[T])\lesssim{} \left(s_T(\shu[T],\se[T])-s_T(\su[T],\se[T])\right)^{\frac{p}{2}}
    \left(s_T(\shu[T],\shu[T])+s_T(\su[T],\su[T])\right)^{\frac{2-p}{2}}.
    \]
    Summing over $T\in\Th$ and using the discrete H\"older inequality, we get
    \begin{equation}\label{est:Ah.3}
      \sum_{T\in\Th}s_T(\se[T],\se[T])\lesssim
      \cE(\se[h])^{\frac{p}{2}}\times
      \left(\sum_{T\in\Th}s_T(\shu[T],\shu[T]) + \sum_{T\in\Th}s_T(\su[T],\su[T])\right)^{\frac{2-p}{2}}.
    \end{equation}
    Combining \eqref{est:Ah.2} and \eqref{est:Ah.3}, and using the seminorm equivalence \eqref{eq:equiv.snorms} leads to
    \[
    \norm[1,p,h]{\se[h]}^p\lesssim \cE(\se[h])^{\frac{p}{2}}\times
    \left(\norm[1,p,h]{\shu[h]}^p+\norm[1,p,h]{\su[h]}^p\right)^{\frac{2-p}{2}}.
    \]
    From the $W^{1,p}$-boundedness of $\IT$ and the a priori bound on $\norm[1,p,h]{\su[h]}$ proved in 
    Propositions 7.1 and 6.1 of Ref.~\cite{Di-Pietro.Droniou:16}, respectively, we infer that
    \begin{equation}\label{eq:stab.IT}
      \norm[1,p,h]{\shu[h]}\lesssim\norm[W^{1,p}(\Omega)]{u}\lesssim 1
      \text{ and }
      \norm[1,p,h]{\su[h]}\lesssim\norm[L^{p'}(\Omega)]{f}^{\frac{1}{(p-1)}}\lesssim 1,
    \end{equation}
    so that
    \begin{equation}\label{est:Ah.4}
      \norm[1,p,h]{\se[h]}^2\lesssim \cE(\se[h]).
    \end{equation}

    In conclusion, combining the initial estimate \eqref{initial.est} with $\sv[h]=\se[h]$ with the bounds \eqref{est:Ah.1} (if $p\ge 2$) and \eqref{est:Ah.4} (if $p<2$), we obtain
    \begin{equation}\label{initial.est.2}
      \begin{aligned}
        &\mbox{If $p\ge 2$}:\quad
        \norm[1,p,h]{\se[h]} \lesssim \mathcal O\left(\term_{1}^{\frac{1}{p-1}}+\term_{2}^{\frac{1}{p-1}}+\term_{3}^{\frac{1}{p-1}}+\term_4^{\frac{1}{p-1}}\right)\,,\\
        &\mbox{If $p<2$}:\quad
        \norm[1,p,h]{\se[h]} \lesssim \mathcal O\left(\term_{1}+\term_{2}+\term_{3}+\term_4\right).
      \end{aligned}	
    \end{equation}

    \medskip

  \item\emph{Estimate of $\term_1$.}

    Recall that, by \eqref{eq:term.i} and \eqref{eq:def.T1T},
    \[
    \term_1=\left(\sum_{T\in\Th}\norm[L^{p'}(T)^d]{\bfa(\cdot,\GT\shu[T])-\bfa(\cdot,\GRAD u)}^{p'}\right)^{\frac{1}{p'}}.
    \]
    Notice also that, by \eqref{eq:commut.GT}, $\GT\shu[T]=\GT\IT u=\lproj[T]{k}(\GRAD u)$.
    Thus, using the approximation properties of $\lproj[T]{k}$ summarised in Lemma \ref{lem:Wkp.interp} below (with $v=\partial_i u$ for $i=1,\ldots,d$), we infer
    \begin{equation}\label{eq:optim.GT}
      \norm[L^p(T)^d]{\GT\shu[T]-\GRAD u}\lesssim h_T^{k+1}\seminorm[W^{k+2,p}(T)]{u}.
    \end{equation}

    \underline{Case $p\ge 2$}: Assume first $p>2$. 
    Recalling \eqref{hyp:alip}, and using the generalised H\"older inequality with exponents $(p',p,r)$ such that $\frac{1}{p'}=\frac{1}{p}+\frac{1}{r}$ (that is $r=\frac{p}{p-2}$) together with \eqref{eq:optim.GT} yields, for all $T\in\Th$,
    \begin{align*}
      &\norm[L^{p'}(T)^d]{\bfa(\cdot,\GT\shu[T])-\bfa(\cdot,\GRAD u)}
      \\
      &\qquad\lesssim \norm[L^p(T)^d]{\GT\shu[T]-\GRAD u}\left(\norm[L^{p}(T)^d]{\GT\shu[T]}^{p-2}+
      \norm[L^{p}(T)^d]{\GRAD u}^{p-2}\right)\\
      &\qquad\lesssim h_T^{k+1}\seminorm[W^{k+2,p}(T)]{u}\left(\norm[L^{p}(T)^d]{\GT\shu[T]}^{p-2}+
      \norm[L^{p}(T)^d]{\GRAD u}^{p-2}\right).
    \end{align*}
    This relation is obviously also valid if $p=2$. 
    We then sum over $T\in\Th$ and use, as before, the generalised H\"older inequality, 
    the estimate $\norm[L^p(\Omega)^d]{\Gh\shu[h]}\lesssim \norm[1,h,p]{\shu[h]}
    \lesssim \norm[W^{1,p}(\Omega)]{u}$
    (see \eqref{eq:equiv.snorms} and Proposition 7.1 in Ref.~\cite{Di-Pietro.Droniou:16}),
    and \eqref{eq:stab.IT} to infer
    \[
    \term_1 \lesssim h^{k+1}\seminorm[W^{k+2,p}(\Th)]{u}\left(\norm[L^p(\Omega)^d]{\Gh\shu[h]}^{p-2}
    +\norm[W^{1,p}(\Omega)]{u}^{p-2}\right)\lesssim h^{k+1}\seminorm[W^{k+2,p}(\Th)]{u}.
    \]

    \smallskip

    \underline{Case $p<2$}: By \eqref{alip:p2} below, $\norm[L^{p'}(T)^d]{\bfa(\cdot,\GT\shu[T])-\bfa(\cdot,\GRAD u)}\lesssim
    \norm[L^{p}(T)^d]{\GT\shu[T]-\GRAD u}^{p-1}$.
    Use then \eqref{eq:optim.GT} and sum over $T\in\Th$ to obtain
    $\term_1 \lesssim h^{(k+1)(p-1)}\seminorm[W^{k+2,p}(\Th)]{u}^{p-1}$.

    \smallskip

    In conclusion, we obtain the following estimates on $\term_1$:
    \begin{equation}\label{est:term1}
      \begin{aligned}
        &\text{If $p\ge 2$}:\quad	\term_1 \lesssim h^{k+1}\seminorm[W^{k+2,p}(\Th)]{u},\\
        &\text{If $p<2$}:\quad	\term_1 \lesssim h^{(k+1)(p-1)}\seminorm[W^{k+2,p}(\Th)]{u}^{p-1}.
      \end{aligned}
    \end{equation}

    \medskip

  \item\emph{Estimate of $\term_2+\term_3$.}
    {}Owing to \eqref{eq:term.i} together with the definitions \eqref{eq:def.T2T} and \eqref{def:T3T} of $\term_{2,T}$ and $\term_{3,T}$, we have
    \begin{multline*}
    \term_2^{p'}+\term_3^{p'}=
    \hspace{-1ex}\sum_{T\in\Th}\Bigg(
    \norm[L^{p'}(T)^d]{\bfa(\cdot,\GRAD u)-\lproj[T]{k}(\bfa(\cdot,\GRAD u))}^{p'}
    \\
    +\sum_{F\in\Fh[T]}h_F \norm[L^{p'}(F)^d]{\bfa(\cdot,\GRAD u)-\lproj[T]{k}(\bfa(\cdot,\GRAD u))}^{p'}
    \Bigg).
    \end{multline*}
    Using the approximation properties \eqref{eq:approx.lproj.Wsp} and \eqref{eq:approx.lproj.Wsp.trace} of $\lproj[T]{k}$ with $v$ replaced by the components of $\bfa(\cdot,\GRAD u)$, $p'$ instead of $p$, and $m=0$, $s=k+1$, we get
    \[
    \term_2^{p'}+\term_3^{p'}\lesssim h^{(k+1)p'}\seminorm[W^{k+1,p'}(\Th)^d]{\bfa(\cdot,\GRAD u)}^{p'}.
    \]
    Taking the power $1/p'$ of this inequality and using $(a+b)^{\frac{1}{p'}}\le a^{\frac{1}{p'}}+b^{\frac{1}{p'}}$ leads to
    \begin{equation}\label{est:T2T3}
      \term_2+\term_3\lesssim h^{k+1}\seminorm[W^{k+1,p'}(\Th)^d]{\bfa(\cdot,\GRAD u)}.
    \end{equation}

    \medskip

  \item\emph{Estimate of $\term_4$.}

    Recall that $\term_4$ is defined by \eqref{eq:def.T4}.
    Using the H\"older inequality, we have for all $T\in\Th$,
    \[
    s_T(\shu[T],\sv[T])\lesssim s_T(\shu[T],\shu[T])^{\frac{1}{p'}} s_T(\sv[T],\sv[T])^{\frac{1}{p}}.
    \]
    Hence, using again the H\"older inequality, since $\sum_{T\in\Th}s_T(\sv[T],\sv[T])\le \norm[1,p,h]{\sv[h]}^p$ by \eqref{eq:equiv.snorms},
    \begin{equation}\label{est:T4.1}
      \term_4\lesssim \left(\sum_{T\in\Th} s_T(\shu[T],\shu[T])\right)^{\frac{1}{p'}}.
    \end{equation}
    We proceed in a similar way as in Lemma 4 of Ref.~\cite{Di-Pietro.Ern.ea:14} to estimate $s_T(\shu[T],\shu[T])$.
    Let $F\in\Fh[T]$.
    We use the definition \eqref{eq:dTF} of the face-based residual operator $\dTF$ together with the triangle inequality,
    the relation $\lproj[F]{k}\lproj[T]{k}=\lproj[T]{k}$, the $L^p(F)$-boundedness \eqref{eq:boundedness.lproj} of $\lproj[F]{k}$, the equality $\pT\shu[T]=\pT\IT u=\eproj[T]{k+1}u$ (cf.~\eqref{eq:pT.eproj}), the trace inequality \eqref{ineq.cont.trace}, and the $L^p(T)$- and $W^{1,p}(T)$-boundedness \eqref{eq:boundedness.lproj} of $\lproj[T]{k}$ to write
    \begin{equation}\label{eq:est.dTF}
      \begin{aligned}
        \norm[L^p(F)]{\dTF\shu[T]}
        &\le\norm[L^p(F)]{\lproj[F]{k}(u -\pT\shu[T])}
        + \norm[L^p(F)]{\lproj[T]{k}(u-\pT\shu[T])}
        \\
        &\lesssim\norm[L^p(F)]{u -\eproj[T]{k+1}u}
        + h_T^{-\frac1p}\norm[L^p(T)]{\lproj[T]{k}(u-\eproj[T]{k+1}u)}    
        \\
        &\qquad+ h_T^{1-\frac1p}\seminorm[W^{1,p}(T)]{\lproj[T]{k}(u-\eproj[T]{k+1}u)}      
        \\
        &\lesssim\norm[L^p(F)]{u -\eproj[T]{k+1}u}
        + h_T^{-\frac1p}\norm[L^p(T)]{u-\eproj[T]{k+1}u}\\
        &\qquad+ h_T^{1-\frac1p}\seminorm[W^{1,p}(T)]{u-\eproj[T]{k+1}u}.
      \end{aligned}
    \end{equation}
    The optimal $W^{s,p}$-estimates on the elliptic projector \eqref{eq:Wsp.approx} and \eqref{eq:Wsp.approx.trace} therefore give, for all $F\in\Fh[T]$,
    \[
    \norm[L^p(F)]{\dTF\shu[T]}
    \lesssim h_T^{k+2-\frac1p}\seminorm[W^{k+2,p}(T)]{u}.
    \]
    Raise this inequality to the power $p$, multiply by $h_F^{1-p}$, use
    $h_F^{1-p}h_T^{(k+2)p-1}\lesssim h_F^{1-p+(k+2)p-1}=h_F^{(k+1)p}\lesssim h^{(k+1)p}$,
    and sum over $F\in\Fh[T]$ to obtain
    \begin{equation}\label{eq:est.sT.shu}
      s_T(\shu[T],\shu[T])\lesssim h^{(k+1)p}\seminorm[W^{k+2,p}(T)]{u}^p.
    \end{equation}
    Substituted into \eqref{est:T4.1}, this gives
    \begin{equation}\label{est:T4}
      \term_4 \lesssim h^{(k+1)(p-1)}\seminorm[W^{k+2,p}(\Th)]{u}^{p-1}.
    \end{equation}

    \medskip

    \noindent{\bf Conclusion.} Use \eqref{est:term1}, \eqref{est:T2T3}, and \eqref{est:T4} in \eqref{initial.est.2}.
  \end{asparaenum}
\begin{remark}[Role of Theorems \ref{thm:Wsp.approx} and \ref{thm:Wsp.approx.trace}]\label{rem:role.T1.T2}
  Theorems \ref{thm:Wsp.approx} and \ref{thm:Wsp.approx.trace} are used through
Eqs. \eqref{eq:Wsp.approx} and \eqref{eq:Wsp.approx.trace} in {\bf Step 5} of the above proof to derive a bound on the stabilisation term $s_T$, when its arguments are the interpolate of the exact solution.
\end{remark}%
The following optimal approximation properties for the $L^2$-orthogonal projector were used in {\bf Step 4} of the above with $U=T\in\Th$.

\begin{lemma}[{$W^{s,p}$-approximation for $\lproj[U]{l}$}]\label{lem:Wkp.interp}
  Let $U$ be as in Theorem \ref{thm:Wsp.approx.trace}.
  Let \mbox{$s\in\{0,\ldots,l+1\}$} and $p\in [1,+\infty]$.
  Then, there exists $C$ depending only on $d$, $\varrho$, $l$, $s$ and $p$ such that, for all $v\in W^{s,p}(U)$,
  \begin{equation}\label{eq:approx.lproj.Wsp}
    \forall m\in \{0,\ldots,s\}\,:\,
    \seminorm[W^{m,p}(U)]{v-\lproj[U]{l}v}\le Ch_U^{s-m}\seminorm[W^{s,p}(U)]{v}
  \end{equation}
  and, if $s\ge 1$,
  \begin{equation}\label{eq:approx.lproj.Wsp.trace}
    \forall m\in \{0,\ldots,s-1\}\,:\,
    h_U^{\frac1p}\seminorm[{W^{m,p}(\Fh[U])}]{v-\lproj[U]{l}v}\le Ch_U^{s-m}\seminorm[W^{s,p}(U)]{v},
  \end{equation}
  with $\Fh[U]$, $W^{m,p}(\Fh[U])$ and corresponding seminorm as in Theorem \ref{thm:Wsp.approx.trace}.
\end{lemma}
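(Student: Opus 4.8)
\textbf{Proof plan for Lemma \ref{lem:Wkp.interp}.}

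The plan is to deduce both estimates from Theorem \ref{thm:Wsp.approx} and Theorem \ref{thm:Wsp.approx.trace} applied to a closely related elliptic-type projector, together with the abstract machinery of Section \ref{sec:eproj:abstract.results}. The cleanest route is to observe that the $L^2$-orthogonal projector $\lproj[U]{l}$ satisfies exactly the hypotheses of Lemma \ref{lem:Wsp.approx.abstract}, but with $q=0$ instead of $q=1$. Indeed, $\lproj[U]{l}:W^{0,1}(U)=L^1(U)\to\Poly{l}(U)$ is a projector, and the required boundedness property \eqref{eq:Wq1.boundedness} reduces in the case $q=0$ to a single assertion: for $m\ge q=0$ one needs the $W^{m,p}$-boundedness of $\lproj[U]{l}$, and in particular (taking $m=0$) the $L^p$-boundedness $\norm[L^p(U)]{\lproj[U]{l}v}\lesssim\norm[L^p(U)]{v}$. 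This is precisely Lemma \ref{lem:Lpstab.gradproj} with $n=1$ and $\mathcal P=\Poly{l}(U)$ (this is the special case noted right before the statement of that lemma). For $m\ge 1$ one also needs $\seminorm[W^{m,p}(U)]{\lproj[U]{l}v}\lesssim\seminorm[W^{m,p}(U)]{v}$ for $v\in W^{m,p}(U)$; this follows by combining the $L^p$-boundedness just mentioned with the reverse Sobolev embeddings on polynomial spaces (Remark A.2 of Ref.~\cite{Di-Pietro.Droniou:16}) and the already-available bound $\seminorm[W^{m,p}(U)]{v-\lproj[U]{l}v}\lesssim h_U^{s-m}\seminorm[W^{s,p}(U)]{v}$ in the case $m=s$, exactly as in \textbf{Step 1} of the proof of Theorem \ref{thm:Wsp.approx}. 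Once \eqref{eq:Wq1.boundedness} is verified with $q=0$, Lemma \ref{lem:Wsp.approx.abstract} delivers \eqref{eq:approx.lproj.Wsp} verbatim.

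First I would record the $L^p$-stability of $\lproj[U]{l}$ as an immediate consequence of Lemma \ref{lem:Lpstab.gradproj} (noting that the inradius hypothesis $r_U/h_U\ge\varrho$, hence the choice $\delta=\varrho$, is guaranteed by the assumptions on $U$ inherited from Theorem \ref{thm:Wsp.approx.trace}, since the simplicial partition forces a lower bound on the inradius of $U$ relative to $h_U$). Next I would upgrade this to $W^{m,p}$-stability for $1\le m\le s$: writing $w\in W^{m,p}(U)$, decompose $w=Q^m w+R^m w$ using the averaged Taylor polynomial as in the proof of Lemma \ref{lem:Wsp.approx.abstract}, use that $\lproj[U]{l}$ fixes $Q^m w$, and bound $\seminorm[W^{m,p}(U)]{\lproj[U]{l}(R^m w)}$ by passing to $\seminorm[W^{0,p}]{\cdot}=\norm[L^p]{\cdot}$ through the reverse Sobolev embedding, then applying $L^p$-stability and \eqref{sob.rep.rem}. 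Then apply Lemma \ref{lem:Wsp.approx.abstract} with $q=0$ to obtain \eqref{eq:approx.lproj.Wsp}.

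Finally, the trace estimate \eqref{eq:approx.lproj.Wsp.trace} follows from \eqref{eq:approx.lproj.Wsp} by precisely the argument used for Theorem \ref{thm:Wsp.approx.trace}: for $\vec{\alpha}\in\Natural^d$ with $\norm[1]{\vec{\alpha}}=m\le s-1$, apply the continuous $L^p$-trace inequality \eqref{ineq.cont.trace} to $w=\partial^{\vec{\alpha}}(v-\lproj[U]{l}v)\in W^{1,p}(U)$, giving
\[
h_U^{\frac1p}\seminorm[{W^{m,p}(\Fh[U])}]{v-\lproj[U]{l}v}\lesssim\seminorm[W^{m,p}(U)]{v-\lproj[U]{l}v}+h_U\seminorm[W^{m+1,p}(U)]{v-\lproj[U]{l}v},
\]
and then bound each term on the right using \eqref{eq:approx.lproj.Wsp} at levels $m$ and $m+1$ (both valid since $m+1\le s$). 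I expect the main subtlety to be the $W^{m,p}$-stability step for $m\ge 1$ (i.e.\ verifying \eqref{eq:Wq1.boundedness.2} for $\lproj[U]{l}$): one must be careful that the reverse Sobolev embedding is applied only to polynomials and that the constant it produces depends only on the allowed quantities, and one must correctly handle the bootstrapping between the stability bound and the approximation bound at $m=s$ so that no circularity arises — this is resolved by establishing approximation and stability jointly by induction on $m$, exactly as the structure of Lemma \ref{lem:Wsp.approx.abstract} and Theorem \ref{thm:Wsp.approx} already arranges. Alternatively, and perhaps more transparently, one may simply observe that Lemma \ref{lem:Wkp.interp} is the special case of Theorems \ref{thm:Wsp.approx} and \ref{thm:Wsp.approx.trace} in which the elliptic projector is replaced by the $L^2$-projector, the only change in the proof being that Lemma \ref{lem:Wsp.approx.abstract} is invoked with $q=0$ rather than $q=1$, so that Step 2 of the proof of Theorem \ref{thm:Wsp.approx} becomes vacuous and Step 1 reduces to Lemma \ref{lem:Lpstab.gradproj}.
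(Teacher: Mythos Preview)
Your approach is the same as the paper's---apply Lemma~\ref{lem:Lpstab.gradproj} with $\mathcal{P}=\Poly{l}(U)$ to get $L^p$-boundedness of $\lproj[U]{l}$, invoke Lemma~\ref{lem:Wsp.approx.abstract} with $q=0$, and repeat the trace argument of Theorem~\ref{thm:Wsp.approx.trace}---and your final ``Alternatively'' paragraph is exactly the paper's proof.

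However, you are misreading the hypothesis \eqref{eq:Wq1.boundedness.2}: the seminorm appearing on both sides is $\seminorm[W^{q,p}(U)]{\cdot}$, not $\seminorm[W^{m,p}(U)]{\cdot}$. With $q=0$ this is \emph{only} the $L^p$-boundedness $\norm[L^p(U)]{\lproj[U]{l}v}\lesssim\norm[L^p(U)]{v}$, for every choice of $m\ge 0$. Consequently your entire second paragraph---the upgrade to $W^{m,p}$-stability for $m\ge 1$, the Taylor decomposition, the bootstrap, and the concern about circularity---is unnecessary: once $L^p$-boundedness is in hand, Lemma~\ref{lem:Wsp.approx.abstract} applies directly for all $m\in\{0,\ldots,s\}$ with no further work (the passage from $W^{m,p}$ to $W^{q,p}=L^p$ via the reverse Sobolev embedding is already built into the proof of that lemma). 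The ``main subtlety'' you anticipate does not arise.
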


\begin{proof}
  This result is a combination of Lemmas 3.4 and 3.6 from Ref.~\cite{Di-Pietro.Droniou:16}.
  We give here an alternative proof based on the abstract results of Section \ref{sec:eproj:abstract.results}.
  By Lemma \ref{lem:Lpstab.gradproj} with $\mathcal{P}=\Poly{l}(U)$, we have the following boundedness property for $\lproj[U]{l}$:
  For all $v\in L^1(U)$, $\norm[L^p(U)]{\lproj[U]{l}v}\le C \norm[L^p(U)]{v}$ with real number $C>0$ depending only on $d$, $\varrho$, and $l$. The estimate \eqref{eq:approx.lproj.Wsp} is then an immediate consequence of Lemma \ref{lem:Wsp.approx.abstract} with $q=0$ and $\proj[0,l]{U}=\lproj[U]{l}$.
  To prove \eqref{eq:approx.lproj.Wsp.trace}, proceed as in Theorem \ref{thm:Wsp.approx.trace} using \eqref{eq:approx.lproj.Wsp} in place of \eqref{eq:Wsp.approx}.
\end{proof}

\begin{corollary}[{$W^{s,p}$-boundedness of $\lproj[U]{l}$}]
  With the same notation as in Lemma~\ref{lem:Wkp.interp}, it holds, for all $v\in W^{s,p}(U)$,
  \begin{equation}\label{eq:boundedness.lproj}
    \seminorm[W^{s,p}(U)]{\lproj[U]{l}v}\le C\seminorm[W^{s,p}(U)]{v}.
  \end{equation}
\end{corollary}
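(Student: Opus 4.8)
The plan is to obtain this immediately from the $W^{s,p}$-approximation estimate \eqref{eq:approx.lproj.Wsp} of Lemma \ref{lem:Wkp.interp}, specialised to the top index. First I would apply \eqref{eq:approx.lproj.Wsp} with $m=s$, which is a legitimate choice since $s\in\{0,\ldots,s\}$; this yields
\[
\seminorm[W^{s,p}(U)]{v-\lproj[U]{l}v}\le C\seminorm[W^{s,p}(U)]{v},
\]
where $C>0$ depends only on $d$, $\varrho$, $l$, $s$, and $p$ (note that the meshsize factor $h_U^{s-m}$ in \eqref{eq:approx.lproj.Wsp} reduces to $1$ precisely when $m=s$, so the bound is scale-invariant, as a boundedness statement must be). Writing $\lproj[U]{l}v = v - (v - \lproj[U]{l}v)$ and using the triangle inequality for the $W^{s,p}(U)$-seminorm then gives
\[
\seminorm[W^{s,p}(U)]{\lproj[U]{l}v}\le \seminorm[W^{s,p}(U)]{v} + \seminorm[W^{s,p}(U)]{v-\lproj[U]{l}v} \le (1+C)\seminorm[W^{s,p}(U)]{v},
\]
which is the claimed estimate (with $C$ relabelled to $1+C$).

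There is essentially no obstacle: the corollary is a routine consequence of the already-proved approximation result, the only point worth flagging being the observation that $m=s$ kills the $h_U$-dependence. An alternative, self-contained route would be a Bramble--Hilbert / scaling argument directly on $\lproj[U]{l}$, but recycling Lemma \ref{lem:Wkp.interp} is by far the shortest path and keeps the dependence of the constant transparent.
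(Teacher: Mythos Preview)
Your proof is correct and essentially identical to the paper's own argument: both use the triangle inequality to split off $\seminorm[W^{s,p}(U)]{v-\lproj[U]{l}v}$ and then invoke \eqref{eq:approx.lproj.Wsp} with $m=s$ to bound it by $C\seminorm[W^{s,p}(U)]{v}$.
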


\begin{proof}
  Use the triangle inequality to write $\seminorm[W^{s,p}(U)]{\lproj[U]{l}v}\le\seminorm[W^{s,p}(U)]{\lproj[U]{l}v-v}+\seminorm[W^{s,p}(U)]{v}$ and conclude using \eqref{eq:approx.lproj.Wsp} with $m=s$ for the first term.
\end{proof}
\subsubsection{Proof of Corollary \ref{cor:error.int}}\label{sec:appl:proofs:error.int}
    Let an element $T\in\Th$ be fixed and set, as in the proof of Theorem~\ref{thm:est.error}, $\shu[T]\eqbydef\IT u$.
    Recalling the definition \eqref{eq:hho-stab} of $s_T$, and using the inequality
    \begin{equation}\label{eq:bnd.(a+b)^p}
      (a+b)^p\le 2^{p-1}a^p+2^{p-1}b^p,
    \end{equation}
    it is inferred
    \begin{align}
        s_T(\su,\su)
        &= \sum_{F\in\Fh[T]}h_F^{1-p}\int_F|\dTF\su|^p
        = \sum_{F\in\Fh[T]}h_F^{1-p}\int_F|\dTF\shu[T] + \dTF(\su-\shu[T])|^p\nonumber
        \\
        &\lesssim s_T(\shu[T], \shu[T]) + s_T(\su-\shu[T],\su-\shu[T]). \label{eq:error.int:1}
    \end{align}
    On the other hand, inserting $\pT\shu[T]-\eproj[T]{k+1}u=0$ (cf. \eqref{eq:pT.eproj}), and using again \eqref{eq:bnd.(a+b)^p}, we have
    \begin{equation}\label{eq:error.int:2}
      \norm[L^p(T)^d]{\GRAD(u-\pT\su)}^p
      \lesssim
      \norm[L^p(T)^d]{\GRAD(u-\eproj[T]{k+1}u)}^p
      + \norm[L^p(T)^d]{\GRAD\pT(\shu[T] - \su)}^p.
    \end{equation}
    Summing \eqref{eq:error.int:1} and \eqref{eq:error.int:2}, and recalling the definition \eqref{def:norm.1} of $\norm[1,p,T]{{\cdot}}$, we obtain
    \begin{multline*}
    \norm[L^p(T)^d]{\GRAD(u-\pT\su)}^p
    + s_T(\su,\su)
    \\
    \lesssim
    \norm[L^p(T)^d]{\GRAD(u-\eproj[T]{k+1}u)}^p + s_T(\shu[T],\shu[T]) + \norm[1,p,T]{\shu[T]-\su}^p.
    \end{multline*}
    The result follows by summing this estimate over $T\in\Th$ and invoking Theorem \ref{thm:Wsp.approx} for the first term in the right-hand side, \eqref{eq:est.sT.shu} for the second, and \eqref{eq:est.error} for the third.

\subsection{Numerical examples}\label{sec:num.ex}

For the sake of completeness, we present here some new numerical examples that demonstrate the orders of convergence achieved by the HHO method in practice.
The test were run using the {\sf hho} software platform\footnote{\emph{Agence pour la Protection des Programmes} deposit number IDDN.FR.001.220005.000.S.P.2016.000.10800}.

\subsubsection{Exponential solution}\label{sec:num.ex:exponential}
We first complete the test cases proposed in Ref.~\cite{Di-Pietro.Droniou:16} by considering the exact solution of Section 4.4 therein for an exponent $p$ strictly smaller than 2.
  More precisely, we solve  on the unit square domain $\Omega=(0,1)^2$ the $p$-Laplace Dirichlet problem with $p=\frac74$ corresponding to the exact solution
$$
  u(\vec{x}) = \exp(x_1 + \pi x_2),
$$
  with suitable right-hand side $f$ inferred from the expression of $u$.
  With this choice, the gradient of $u$ is nonzero, which prevents dealing with singularities.
\begin{figure}
  \centering
  \includegraphics[height=2.5cm]{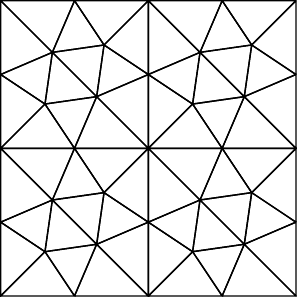}
  \hspace{0.25cm}
  \includegraphics[height=2.5cm]{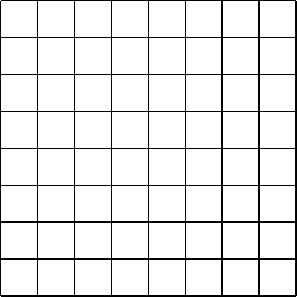}
  \hspace{0.25cm}
  \includegraphics[height=2.5cm]{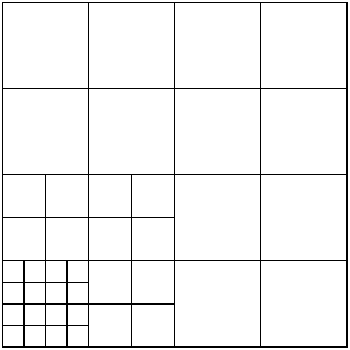}
  \hspace{0.25cm}
  \includegraphics[height=2.5cm]{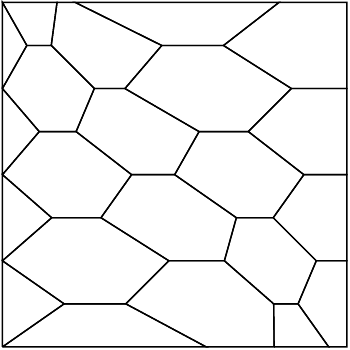}
  \caption{Matching triangular, Cartesian, locally refined and hexagonal mesh families used in the numerical examples of Section \ref{sec:num.ex}.\label{fig:meshes}}
\end{figure}

  We consider the matching triangular, Cartesian, locally refined, and (predominantly) hexagonal mesh families depicted in Figure \ref{fig:meshes} and polynomial degrees ranging from $0$ to $3$.
The three former mesh families have been used in the FVCA5 benchmark\cite{Herbin.Hubert:08}, whereas the latter is taken from Ref.~\cite{Di-Pietro.Lemaire:15}.
The local refinement in the third mesh family has no specific meaning for the problem considered here: its purpose is to demonstrate the seamless treatment of nonconforming interfaces.
%
%------------------------------------------------------------------------------%
%
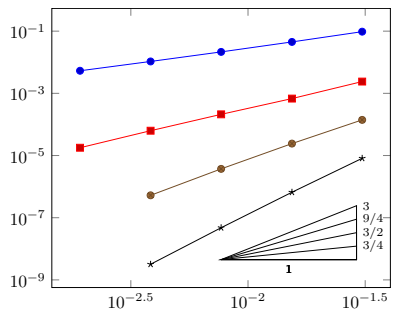
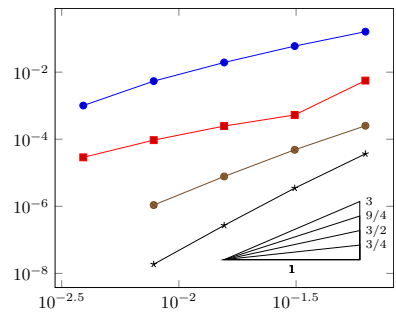
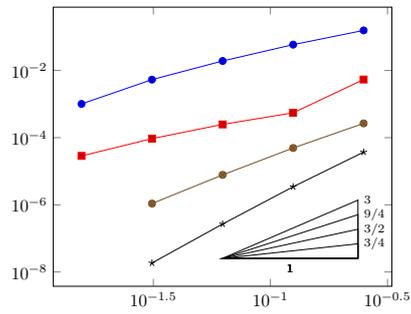
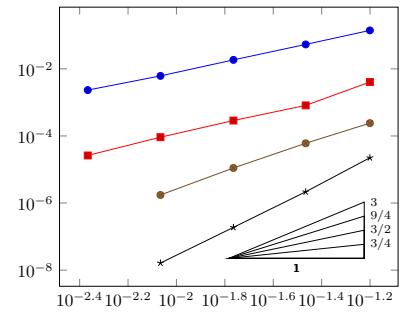
\begin{figure}
  \centering
  \begin{small}
    \tikzexternaldisable
%    \ref{conv.legend:2}
    \tikzexternalenable
  \end{small}
%  \vspace{0.5cm} \\
  \begin{minipage}{0.45\textwidth}
    \begin{tikzpicture}[scale=0.65]
      \begin{loglogaxis}[
          legend columns=-1,
          legend to name=conv.legend:2,
          legend style={/tikz/every even column/.append style={column sep=0.35cm}}
        ]
        \addplot table[x=meshsize,y=err_p]{regular-plap2_0_mesh1.dat};
        \addplot table[x=meshsize,y=err_p]{regular-plap2_1_mesh1.dat};
        \addplot table[x=meshsize,y=err_p]{regular-plap2_2_mesh1.dat};
        \addplot table[x=meshsize,y=err_p]{regular-plap2_3_mesh1.dat};
        \logLogSlopeTriangle{0.90}{0.4}{0.1}{3/4}{black};
        \logLogSlopeTriangle{0.90}{0.4}{0.1}{3/2}{black};
        \logLogSlopeTriangle{0.90}{0.4}{0.1}{9/4}{black};
        \logLogSlopeTriangle{0.90}{0.4}{0.1}{3}{black};
        \legend{$k=0$,$k=1$,$k=2$,$k=3$}
      \end{loglogaxis}
    \end{tikzpicture}
    \subcaption{Triangular}
  \end{minipage}
  \begin{minipage}{0.45\textwidth}
    \begin{tikzpicture}[scale=0.65]
      \begin{loglogaxis}
        \addplot table[x=meshsize,y=err_p]{regular-plap2_0_mesh2.dat};
        \addplot table[x=meshsize,y=err_p]{regular-plap2_1_mesh2.dat};
        \addplot table[x=meshsize,y=err_p]{regular-plap2_2_mesh2.dat};
        \addplot table[x=meshsize,y=err_p]{regular-plap2_3_mesh2.dat};
        \logLogSlopeTriangle{0.90}{0.4}{0.1}{3/4}{black};
        \logLogSlopeTriangle{0.90}{0.4}{0.1}{3/2}{black};
        \logLogSlopeTriangle{0.90}{0.4}{0.1}{9/4}{black};
        \logLogSlopeTriangle{0.90}{0.4}{0.1}{3}{black};
      \end{loglogaxis}
    \end{tikzpicture}
    \subcaption{Cartesian}
  \end{minipage}
  \vspace{0.5cm}\\
  \begin{minipage}{0.45\textwidth}
    \begin{tikzpicture}[scale=0.65]
      \begin{loglogaxis}
        \addplot table[x=meshsize,y=err_p]{regular-plap2_0_mesh3.dat};
        \addplot table[x=meshsize,y=err_p]{regular-plap2_1_mesh3.dat};
        \addplot table[x=meshsize,y=err_p]{regular-plap2_2_mesh3.dat};
        \addplot table[x=meshsize,y=err_p]{regular-plap2_3_mesh3.dat};
        \logLogSlopeTriangle{0.90}{0.4}{0.1}{3/4}{black};
        \logLogSlopeTriangle{0.90}{0.4}{0.1}{3/2}{black};
        \logLogSlopeTriangle{0.90}{0.4}{0.1}{9/4}{black};
        \logLogSlopeTriangle{0.90}{0.4}{0.1}{3}{black};
      \end{loglogaxis}
    \end{tikzpicture}
    \subcaption{Loc. ref.}
  \end{minipage}
  \begin{minipage}{0.45\textwidth}
    \begin{tikzpicture}[scale=0.65]
      \begin{loglogaxis}
        \addplot table[x=meshsize,y=err_p]{regular-plap2_0_pi6_tiltedhexagonal.dat};
        \addplot table[x=meshsize,y=err_p]{regular-plap2_1_pi6_tiltedhexagonal.dat};
        \addplot table[x=meshsize,y=err_p]{regular-plap2_2_pi6_tiltedhexagonal.dat};
        \addplot table[x=meshsize,y=err_p]{regular-plap2_3_pi6_tiltedhexagonal.dat};
        \logLogSlopeTriangle{0.90}{0.4}{0.1}{3/4}{black};
        \logLogSlopeTriangle{0.90}{0.4}{0.1}{3/2}{black};
        \logLogSlopeTriangle{0.90}{0.4}{0.1}{9/4}{black};
        \logLogSlopeTriangle{0.90}{0.4}{0.1}{3}{black};
      \end{loglogaxis}
    \end{tikzpicture}
    \subcaption{Hexagonal}
  \end{minipage}
  \caption{$\norm[1,p,h]{\Ih u-\su[h]}$ versus $h$ for the test of Section~\ref{sec:num.ex:exponential} and the mesh families of Figure \ref{fig:meshes} with $p=\frac{7}{4}$.
    The slopes represent the orders of convergence expected from Theorem \ref{thm:est.error}, i.e. $\frac{3}{4}(k+1)$ for $k\in\{0,\ldots,3\}$ (resp. blue dots, red squares, brown dots, black stars).\label{fig:results:2}}
\end{figure}

We report in Figure \ref{fig:results:2} the error $\norm[1,p,h]{\Ih u-\su[h]}$ versus the meshsize $h$.
The observed orders of convergence seem to suggest that our estimate~\eqref{eq:est.error} is sharp.
For $k=0$, superconvergence is observed on the Cartesian mesh family and, to a lesser extent, on the locally refined mesh family. This kind of superconvergence phenomena have already been observed in the past for the Poisson problem corresponding to $p=2$ (to this date, a theoretical investigation is still not available).

\subsubsection{Trigonometric solution}\label{sec:num.ex:trigonometric}

The test case of the previous section had already been solved in Ref.~\cite{Di-Pietro.Droniou:16} for different vales of $p$ greater or equal than 2. We therefore consider here a different manufactured solution.
We solve on the unit square domain $\Omega=(0,1)^2$ the homogeneous $p$-Laplace Dirichlet problem corresponding to the exact solution
$$
u(\vec{x}) = \sin(\pi x_1)\sin(\pi x_2),
$$
with $p\in\{2,3,4\}$ and source term inferred from $u$ (cf. \eqref{choice:plap} for the expression of $\bfa$ in this case).
We consider the same mesh families and polynomial orders as in the previous section.

We report in Figure \ref{fig:results} the error $\norm[1,p,h]{\Ih u-\su[h]}$ versus the meshsize $h$.
  From the leftmost column, we see that the error estimates are sharp for $p=2$, which confirms the results of Ref.~\cite{Di-Pietro.Ern.ea:14} (a known superconvergence phenomenon is observed on the Cartesian mesh for $k=0$).
  For $p=3,4$, better orders of convergence than the asymptotic ones (cf. Remark \ref{rem:order.of.conv}) are observed in most of the cases.
  One possible explanation is that the lowest-order terms in the right-hand side of \eqref{eq:est.error} are not yet dominant for the specific problem data and mesh at hand.
  Another possibility is that compensations occur among lowest-order terms that are separately estimated in the proof of Theorem \ref{thm:est.error}.
  For $k=3$ and $p=3$, the observed orders of convergence in the last refinement steps are inferior to the predicted value for smooth solutions, which can likely be ascribed to the violation of the regularity assumption on $\bfa(\cdot,\GRAD u)$ (cf. Theorem \ref{thm:est.error}), due to the lack of smoothness of $\bfa$ for that $p$.

\begin{figure}
  \centering
  \begin{small}
    \tikzexternaldisable
%    \ref{conv.legend}
    \tikzexternalenable
  \end{small}
%  \vspace{0.5cm} \\
  \begin{minipage}{0.32\textwidth}
    \begin{tikzpicture}[scale=0.50]
      \begin{loglogaxis}[
          legend columns=-1,
          legend to name=conv.legend,
          legend style={/tikz/every even column/.append style={column sep=0.35cm}}
        ]
        \addplot table[x=meshsize,y=errp2]{pge2-plap_0_mesh1_ocv_nous.dat};
        \addplot table[x=meshsize,y=errp2]{pge2-plap_1_mesh1_ocv_nous.dat};
        \addplot table[x=meshsize,y=errp2]{pge2-plap_2_mesh1_ocv_nous.dat};
        \addplot table[x=meshsize,y=errp2]{pge2-plap_3_mesh1_ocv_nous.dat};
        \logLogSlopeTriangle{0.90}{0.4}{0.1}{1}{black};
        \logLogSlopeTriangle{0.90}{0.4}{0.1}{2}{black};
        \logLogSlopeTriangle{0.90}{0.4}{0.1}{3}{black};
        \logLogSlopeTriangle{0.90}{0.4}{0.1}{4}{black};
        \legend{$k=0$,$k=1$,$k=2$,$k=3$}
      \end{loglogaxis}
    \end{tikzpicture}
    \subcaption{Triangular, $p=2$}
  \end{minipage}  
  \begin{minipage}{0.32\textwidth}  
    \begin{tikzpicture}[scale=0.50]
      \begin{loglogaxis}
        \addplot table[x=meshsize,y=errp3]{pge2-plap_0_mesh1_ocv_nous.dat};
        \addplot table[x=meshsize,y=errp3]{pge2-plap_1_mesh1_ocv_nous.dat};
        \addplot table[x=meshsize,y=errp3]{pge2-plap_2_mesh1_ocv_nous.dat};
        \addplot table[x=meshsize,y=errp3]{pge2-plap_3_mesh1_ocv_nous.dat};
        \logLogSlopeTriangle{0.85}{0.4}{0.1}{1/2}{black};
        \logLogSlopeTriangle{0.85}{0.4}{0.1}{1}{black};
        \logLogSlopeTriangle{0.85}{0.4}{0.1}{3/2}{black};
        \logLogSlopeTriangle{0.85}{0.4}{0.1}{2}{black};        
        %% \legend{$k=0$,$k=1$,$k=2$,$k=3$}
      \end{loglogaxis}
    \end{tikzpicture}
    \subcaption{Triangular, $p=3$}    
  \end{minipage}  
  \begin{minipage}{0.32\textwidth}  
    \begin{tikzpicture}[scale=0.50]
      \begin{loglogaxis}
        \addplot table[x=meshsize,y=errp4]{pge2-plap_0_mesh1_ocv_nous.dat};
        \addplot table[x=meshsize,y=errp4]{pge2-plap_1_mesh1_ocv_nous.dat};
        \addplot table[x=meshsize,y=errp4]{pge2-plap_2_mesh1_ocv_nous.dat};
        \addplot table[x=meshsize,y=errp4]{pge2-plap_3_mesh1_ocv_nous.dat};
        \logLogSlopeTriangle{0.85}{0.4}{0.1}{1/3}{black};
        \logLogSlopeTriangle{0.85}{0.4}{0.1}{2/3}{black};
        \logLogSlopeTriangle{0.85}{0.4}{0.1}{1}{black};
        \logLogSlopeTriangle{0.85}{0.4}{0.1}{4/3}{black};        
        %% \legend{$k=0$,$k=1$,$k=2$,$k=3$}
      \end{loglogaxis}
    \end{tikzpicture}
    \subcaption{Triangular, $p=4$}    
  \end{minipage}
  \vspace{0.25cm} \\
  \begin{minipage}{0.32\textwidth}
    \begin{tikzpicture}[scale=0.50]
      \begin{loglogaxis}
        \addplot table[x=meshsize,y=errp2]{pge2-plap_0_mesh2_ocv_nous.dat};
        \addplot table[x=meshsize,y=errp2]{pge2-plap_1_mesh2_ocv_nous.dat};
        \addplot table[x=meshsize,y=errp2]{pge2-plap_2_mesh2_ocv_nous.dat};
        \addplot table[x=meshsize,y=errp2]{pge2-plap_3_mesh2_ocv_nous.dat};
        \logLogSlopeTriangle{0.90}{0.4}{0.1}{1}{black};
        \logLogSlopeTriangle{0.90}{0.4}{0.1}{2}{black};
        \logLogSlopeTriangle{0.90}{0.4}{0.1}{3}{black};
        \logLogSlopeTriangle{0.90}{0.4}{0.1}{4}{black};
        %% \legend{$k=0$,$k=1$,$k=2$,$k=3$}
      \end{loglogaxis}
    \end{tikzpicture}
    \subcaption{Cartesian, $p=2$}
  \end{minipage}  
  \begin{minipage}{0.32\textwidth}  
    \begin{tikzpicture}[scale=0.50]
      \begin{loglogaxis}
        \addplot table[x=meshsize,y=errp3]{pge2-plap_0_mesh2_ocv_nous.dat};
        \addplot table[x=meshsize,y=errp3]{pge2-plap_1_mesh2_ocv_nous.dat};
        \addplot table[x=meshsize,y=errp3]{pge2-plap_2_mesh2_ocv_nous.dat};
        \addplot table[x=meshsize,y=errp3]{pge2-plap_3_mesh2_ocv_nous.dat};
        \logLogSlopeTriangle{0.85}{0.4}{0.1}{1/2}{black};
        \logLogSlopeTriangle{0.85}{0.4}{0.1}{1}{black};
        \logLogSlopeTriangle{0.85}{0.4}{0.1}{3/2}{black};
        \logLogSlopeTriangle{0.85}{0.4}{0.1}{2}{black};        
        %% \legend{$k=0$,$k=1$,$k=2$,$k=3$}
      \end{loglogaxis}
    \end{tikzpicture}
    \subcaption{Cartesian, $p=3$}    
  \end{minipage}  
  \begin{minipage}{0.32\textwidth}  
    \begin{tikzpicture}[scale=0.50]
      \begin{loglogaxis}
        \addplot table[x=meshsize,y=errp4]{pge2-plap_0_mesh2_ocv_nous.dat};
        \addplot table[x=meshsize,y=errp4]{pge2-plap_1_mesh2_ocv_nous.dat};
        \addplot table[x=meshsize,y=errp4]{pge2-plap_2_mesh2_ocv_nous.dat};
        \addplot table[x=meshsize,y=errp4]{pge2-plap_3_mesh2_ocv_nous.dat};
        \logLogSlopeTriangle{0.85}{0.4}{0.1}{1/3}{black};
        \logLogSlopeTriangle{0.85}{0.4}{0.1}{2/3}{black};
        \logLogSlopeTriangle{0.85}{0.4}{0.1}{1}{black};
        \logLogSlopeTriangle{0.85}{0.4}{0.1}{4/3}{black};        
        %% \legend{$k=0$,$k=1$,$k=2$,$k=3$}
      \end{loglogaxis}
    \end{tikzpicture}
    \subcaption{Cartesian, $p=4$}    
  \end{minipage}
  \vspace{0.25cm} \\
  \begin{minipage}{0.32\textwidth}
    \begin{tikzpicture}[scale=0.50]
      \begin{loglogaxis}
        \addplot table[x=meshsize,y=errp2]{pge2-plap_0_mesh3_ocv_nous.dat};
        \addplot table[x=meshsize,y=errp2]{pge2-plap_1_mesh3_ocv_nous.dat};
        \addplot table[x=meshsize,y=errp2]{pge2-plap_2_mesh3_ocv_nous.dat};
        \addplot table[x=meshsize,y=errp2]{pge2-plap_3_mesh3_ocv_nous.dat};
        \logLogSlopeTriangle{0.90}{0.4}{0.1}{1}{black};
        \logLogSlopeTriangle{0.90}{0.4}{0.1}{2}{black};
        \logLogSlopeTriangle{0.90}{0.4}{0.1}{3}{black};
        \logLogSlopeTriangle{0.90}{0.4}{0.1}{4}{black};
        %% \legend{$k=0$,$k=1$,$k=2$,$k=3$}
      \end{loglogaxis}
    \end{tikzpicture}
    \subcaption{Loc. ref., $p=2$}
  \end{minipage}  
  \begin{minipage}{0.32\textwidth}  
    \begin{tikzpicture}[scale=0.50]
      \begin{loglogaxis}
        \addplot table[x=meshsize,y=errp3]{pge2-plap_0_mesh3_ocv_nous.dat};
        \addplot table[x=meshsize,y=errp3]{pge2-plap_1_mesh3_ocv_nous.dat};
        \addplot table[x=meshsize,y=errp3]{pge2-plap_2_mesh3_ocv_nous.dat};
        \addplot table[x=meshsize,y=errp3]{pge2-plap_3_mesh3_ocv_nous.dat};
        \logLogSlopeTriangle{0.85}{0.4}{0.1}{1/2}{black};
        \logLogSlopeTriangle{0.85}{0.4}{0.1}{1}{black};
        \logLogSlopeTriangle{0.85}{0.4}{0.1}{3/2}{black};
        \logLogSlopeTriangle{0.85}{0.4}{0.1}{2}{black};        
        %% \legend{$k=0$,$k=1$,$k=2$,$k=3$}
      \end{loglogaxis}
    \end{tikzpicture}
    \subcaption{Loc. ref., $p=3$}    
  \end{minipage}  
  \begin{minipage}{0.32\textwidth}  
    \begin{tikzpicture}[scale=0.50]
      \begin{loglogaxis}
        \addplot table[x=meshsize,y=errp4]{pge2-plap_0_mesh3_ocv_nous.dat};
        \addplot table[x=meshsize,y=errp4]{pge2-plap_1_mesh3_ocv_nous.dat};
        \addplot table[x=meshsize,y=errp4]{pge2-plap_2_mesh3_ocv_nous.dat};
        \addplot table[x=meshsize,y=errp4]{pge2-plap_3_mesh3_ocv_nous.dat};
        \logLogSlopeTriangle{0.85}{0.4}{0.1}{1/3}{black};
        \logLogSlopeTriangle{0.85}{0.4}{0.1}{2/3}{black};
        \logLogSlopeTriangle{0.85}{0.4}{0.1}{1}{black};
        \logLogSlopeTriangle{0.85}{0.4}{0.1}{4/3}{black};        
        %% \legend{$k=0$,$k=1$,$k=2$,$k=3$}
      \end{loglogaxis}
    \end{tikzpicture}
    \subcaption{Loc. ref., $p=4$}    
  \end{minipage}
  \vspace{0.25cm} \\
  \begin{minipage}{0.32\textwidth}
    \begin{tikzpicture}[scale=0.50]
      \begin{loglogaxis}
        \addplot table[x=meshsize,y=errp2]{pge2-plap_0_pi6_tiltedhexagonal_ocv_nous.dat};
        \addplot table[x=meshsize,y=errp2]{pge2-plap_1_pi6_tiltedhexagonal_ocv_nous.dat};
        \addplot table[x=meshsize,y=errp2]{pge2-plap_2_pi6_tiltedhexagonal_ocv_nous.dat};
        \addplot table[x=meshsize,y=errp2]{pge2-plap_3_pi6_tiltedhexagonal_ocv_nous.dat};
        \logLogSlopeTriangle{0.90}{0.4}{0.1}{1}{black};
        \logLogSlopeTriangle{0.90}{0.4}{0.1}{2}{black};
        \logLogSlopeTriangle{0.90}{0.4}{0.1}{3}{black};
        \logLogSlopeTriangle{0.90}{0.4}{0.1}{4}{black};
        %% \legend{$k=0$,$k=1$,$k=2$,$k=3$}
      \end{loglogaxis}
    \end{tikzpicture}
    \subcaption{Hexagonal, $p=2$}
  \end{minipage}  
  \begin{minipage}{0.32\textwidth}  
    \begin{tikzpicture}[scale=0.50]
      \begin{loglogaxis}
        \addplot table[x=meshsize,y=errp3]{pge2-plap_0_pi6_tiltedhexagonal_ocv_nous.dat};
        \addplot table[x=meshsize,y=errp3]{pge2-plap_1_pi6_tiltedhexagonal_ocv_nous.dat};
        \addplot table[x=meshsize,y=errp3]{pge2-plap_2_pi6_tiltedhexagonal_ocv_nous.dat};
        \addplot table[x=meshsize,y=errp3]{pge2-plap_3_pi6_tiltedhexagonal_ocv_nous.dat};
        \logLogSlopeTriangle{0.85}{0.4}{0.1}{1/2}{black};
        \logLogSlopeTriangle{0.85}{0.4}{0.1}{1}{black};
        \logLogSlopeTriangle{0.85}{0.4}{0.1}{3/2}{black};
        \logLogSlopeTriangle{0.85}{0.4}{0.1}{2}{black};        
        %% \legend{$k=0$,$k=1$,$k=2$,$k=3$}
      \end{loglogaxis}
    \end{tikzpicture}
    \subcaption{Hexagonal, $p=3$}    
  \end{minipage}  
  \begin{minipage}{0.32\textwidth}  
    \begin{tikzpicture}[scale=0.50]
      \begin{loglogaxis}
        \addplot table[x=meshsize,y=errp4]{pge2-plap_0_pi6_tiltedhexagonal_ocv_nous.dat};
        \addplot table[x=meshsize,y=errp4]{pge2-plap_1_pi6_tiltedhexagonal_ocv_nous.dat};
        \addplot table[x=meshsize,y=errp4]{pge2-plap_2_pi6_tiltedhexagonal_ocv_nous.dat};
        \addplot table[x=meshsize,y=errp4]{pge2-plap_3_pi6_tiltedhexagonal_ocv_nous.dat};
        \logLogSlopeTriangle{0.85}{0.4}{0.1}{1/3}{black};
        \logLogSlopeTriangle{0.85}{0.4}{0.1}{2/3}{black};
        \logLogSlopeTriangle{0.85}{0.4}{0.1}{1}{black};
        \logLogSlopeTriangle{0.85}{0.4}{0.1}{4/3}{black};        
        %% \legend{$k=0$,$k=1$,$k=2$,$k=3$}
      \end{loglogaxis}
    \end{tikzpicture}
    \subcaption{Hexagonal, $p=4$}    
  \end{minipage}
  \caption{$\norm[1,p,h]{\Ih u-\su[h]}$ versus $h$ for the test of Section~\ref{sec:num.ex:trigonometric} and the mesh families of Figure \ref{fig:meshes}.
    The slopes represent the orders of convergence expected from Theorem \ref{thm:est.error}, i.e. $\frac{k+1}{p-1}$ for $k\in\{0,\ldots,3\}$ (resp. blue dots, red squares, brown dots, black stars) and $p\in\{2,3,4\}$.\label{fig:results}}
\end{figure}
%
%------------------------------------------------------------------------------%
%
\begin{figure}
  \centering
  \begin{small}
    \tikzexternaldisable
%    \ref{conv.legend:2}
    \tikzexternalenable
  \end{small}
%  \vspace{0.5cm} \\
  \begin{minipage}{0.45\textwidth}
    \begin{tikzpicture}[scale=0.65]
      \begin{loglogaxis}[
          legend columns=-1,
          legend to name=conv.legend:2,
          legend style={/tikz/every even column/.append style={column sep=0.35cm}},
          ymin=1e-6
        ]
        \addplot table[x=meshsize,y=err_p]{singular-plap2_0_mesh1.dat};
        \addplot table[x=meshsize,y=err_p]{singular-plap2_1_mesh1.dat};
        \addplot table[x=meshsize,y=err_p]{singular-plap2_2_mesh1.dat};
        \addplot table[x=meshsize,y=err_p]{singular-plap2_3_mesh1.dat};
        \logLogSlopeTriangle{0.90}{0.4}{0.1}{3/4}{black};
        \logLogSlopeTriangle{0.90}{0.4}{0.1}{3/2}{black};
        \logLogSlopeTriangle{0.90}{0.4}{0.1}{9/4}{black};
        \logLogSlopeTriangle{0.90}{0.4}{0.1}{3}{black};
        \legend{$k=0$,$k=1$,$k=2$,$k=3$}
      \end{loglogaxis}
    \end{tikzpicture}
    \subcaption{Triangular}
  \end{minipage}
  \begin{minipage}{0.45\textwidth}
    \begin{tikzpicture}[scale=0.65]
      \begin{loglogaxis}[ymin=1e-5]
        \addplot table[x=meshsize,y=err_p]{singular-plap2_0_mesh2.dat};
        \addplot table[x=meshsize,y=err_p]{singular-plap2_1_mesh2.dat};
        \addplot table[x=meshsize,y=err_p]{singular-plap2_2_mesh2.dat};
        \addplot table[x=meshsize,y=err_p]{singular-plap2_3_mesh2.dat};
        \logLogSlopeTriangle{0.90}{0.4}{0.1}{3/4}{black};
        \logLogSlopeTriangle{0.90}{0.4}{0.1}{3/2}{black};
        \logLogSlopeTriangle{0.90}{0.4}{0.1}{9/4}{black};
        \logLogSlopeTriangle{0.90}{0.4}{0.1}{3}{black};
      \end{loglogaxis}
    \end{tikzpicture}
    \subcaption{Cartesian}
  \end{minipage}
  \vspace{0.5cm}\\
  \begin{minipage}{0.45\textwidth}
    \begin{tikzpicture}[scale=0.65]
      \begin{loglogaxis}[ymin=1e-5]
        \addplot table[x=meshsize,y=err_p]{singular-plap2_0_mesh3.dat};
        \addplot table[x=meshsize,y=err_p]{singular-plap2_1_mesh3.dat};
        \addplot table[x=meshsize,y=err_p]{singular-plap2_2_mesh3.dat};
        \addplot table[x=meshsize,y=err_p]{singular-plap2_3_mesh3.dat};
        \logLogSlopeTriangle{0.90}{0.4}{0.1}{3/4}{black};
        \logLogSlopeTriangle{0.90}{0.4}{0.1}{3/2}{black};
        \logLogSlopeTriangle{0.90}{0.4}{0.1}{9/4}{black};
        \logLogSlopeTriangle{0.90}{0.4}{0.1}{3}{black};
      \end{loglogaxis}
    \end{tikzpicture}
    \subcaption{Loc. ref.}
  \end{minipage}
  \begin{minipage}{0.45\textwidth}
    \begin{tikzpicture}[scale=0.65]
      \begin{loglogaxis}[ymin=1e-5]
        \addplot table[x=meshsize,y=err_p]{singular-plap2_0_pi6_tiltedhexagonal.dat};
        \addplot table[x=meshsize,y=err_p]{singular-plap2_1_pi6_tiltedhexagonal.dat};
        \addplot table[x=meshsize,y=err_p]{singular-plap2_2_pi6_tiltedhexagonal.dat};
        \addplot table[x=meshsize,y=err_p]{singular-plap2_3_pi6_tiltedhexagonal.dat};
        \logLogSlopeTriangle{0.90}{0.4}{0.1}{3/4}{black};
        \logLogSlopeTriangle{0.90}{0.4}{0.1}{3/2}{black};
        \logLogSlopeTriangle{0.90}{0.4}{0.1}{9/4}{black};
        \logLogSlopeTriangle{0.90}{0.4}{0.1}{3}{black};
      \end{loglogaxis}
    \end{tikzpicture}
    \subcaption{Hexagonal}
  \end{minipage}
  \caption{$\norm[1,p,h]{\Ih u-\su[h]}$ versus $h$ for the test of Section~\ref{sec:num.ex:trigonometric} and the mesh families of Figure \ref{fig:meshes} with $p=\frac{7}{4}$.
    The slopes represent the orders of convergence expected from Theorem \ref{thm:est.error}, i.e. $\frac{3}{4}(k+1)$ for $k\in\{0,\ldots,3\}$ (resp. blue dots, red squares, brown dots, black stars). In this case, the order of convergence is limited by the regularity of $\bfa(\cdot,\GRAD u)$.\label{fig:results:3}}
\end{figure}
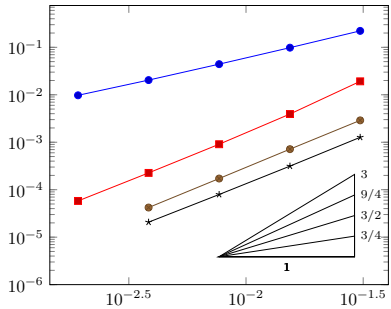
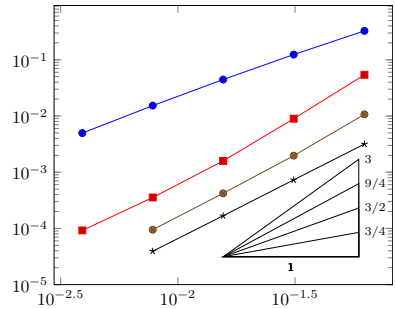
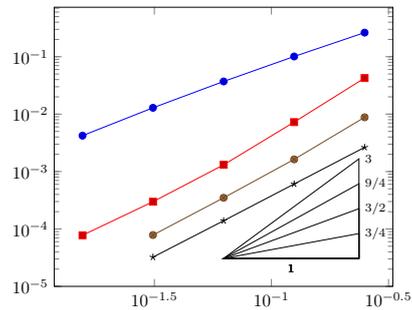
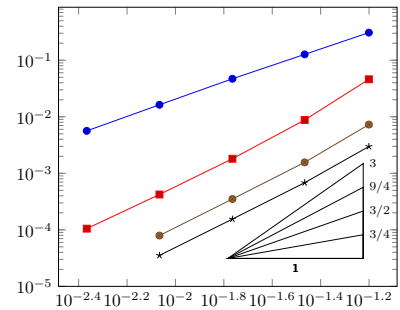

For the sake of completeness, we consider also in this case the exponent $p=\frac74$. Unlike in the previous section, the regularity 
required by the error estimates in Theorem~\ref{thm:est.error} does not seem to hold for the exact solution.
This can be clearly seen in Figure \ref{fig:results:3}, where the observed convergence rate seems optimal for $k\in\{0,1\}$, while it stagnates for $k\in\{2,3\}$.
For this value of $p$, the derivatives of $\vec{\xi}\to|\vec{\xi}|^{p-2}\vec{\xi}$ are singular
at $\vec{\xi}=\vec{0}$, which prevents $\bfa(\cdot,\GRAD u)$ from having the
$W^{k+1,p'}$ regularity required in \eqref{def:Eh.1}.

%
%------------------------------------------------------------------------------%
%
\appendix

\section{Inequalities involving the Leray--Lions operator}\label{app:llop}

This section collects inequalities involving the Leray--Lions operator adapted from Ref.~\cite{gdm}.

\begin{lemma}\label{lem:tech.1}
  Assume \eqref{hyp:ag}, \eqref{hyp:alip}, and $p\le 2$. Then, for a.e. $\vec{x}\in\Omega$
  and all $(\vec{\xi},\vec{\eta})\in\Real^d\times\Real^d$,
  \begin{equation}\label{alip:p2}
    |\bfa(\vec{x},\vec{\xi})-\bfa(\vec{x},\vec{\eta})|\le (2\lipa+2^{p-1}\upa+\upa)|\vec{\xi}-\vec{\eta}|^{p-1}.
  \end{equation}
\end{lemma}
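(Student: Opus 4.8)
The inequality \eqref{alip:p2} is a pointwise statement, so I would fix $\vec{x}\in\Omega$ (outside the negligible set where \eqref{hyp:ag} or \eqref{hyp:alip} fail) and $(\vec{\xi},\vec{\eta})\in\Real^d\times\Real^d$, and argue by a case split on the size of $|\vec{\xi}-\vec{\eta}|$ relative to $\min(|\vec{\xi}|,|\vec{\eta}|)$. Since both sides of \eqref{alip:p2} are unchanged under the exchange $\vec{\xi}\leftrightarrow\vec{\eta}$, I may assume without loss of generality that $|\vec{\eta}|\le|\vec{\xi}|$; and if $\vec{\xi}=\vec{\eta}$ the claim is trivial, so I may also assume $|\vec{\xi}-\vec{\eta}|>0$. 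The two cases to treat are then $|\vec{\eta}|\ge|\vec{\xi}-\vec{\eta}|$ and $|\vec{\eta}|<|\vec{\xi}-\vec{\eta}|$.

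In the first case, $|\vec{\xi}|\ge|\vec{\eta}|\ge|\vec{\xi}-\vec{\eta}|>0$, and I would invoke the Lipschitz-type bound \eqref{hyp:alip}. Because $p\le 2$, the exponent $p-2$ is non-positive, so $t\mapsto t^{p-2}$ is non-increasing on $(0,+\infty)$ and both $|\vec{\xi}|^{p-2}$ and $|\vec{\eta}|^{p-2}$ are bounded above by $|\vec{\xi}-\vec{\eta}|^{p-2}$; substituting into \eqref{hyp:alip} gives $|\bfa(\vec{x},\vec{\xi})-\bfa(\vec{x},\vec{\eta})|\le 2\lipa|\vec{\xi}-\vec{\eta}|^{p-1}$.

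In the second case, $|\vec{\eta}|<|\vec{\xi}-\vec{\eta}|$, and I would instead route through the origin: by \eqref{hyp:ag} and the triangle inequality, $|\bfa(\vec{x},\vec{\xi})-\bfa(\vec{x},\vec{\eta})|\le\upa\bigl(|\vec{\xi}|^{p-1}+|\vec{\eta}|^{p-1}\bigr)$. Here $p-1>0$, so $t\mapsto t^{p-1}$ is non-decreasing; from $|\vec{\eta}|<|\vec{\xi}-\vec{\eta}|$ one gets $|\vec{\eta}|^{p-1}\le|\vec{\xi}-\vec{\eta}|^{p-1}$, and from $|\vec{\xi}|\le|\vec{\eta}|+|\vec{\xi}-\vec{\eta}|\le 2|\vec{\xi}-\vec{\eta}|$ one gets $|\vec{\xi}|^{p-1}\le 2^{p-1}|\vec{\xi}-\vec{\eta}|^{p-1}$, whence $|\bfa(\vec{x},\vec{\xi})-\bfa(\vec{x},\vec{\eta})|\le(2^{p-1}\upa+\upa)|\vec{\xi}-\vec{\eta}|^{p-1}$. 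Since both $2\lipa$ and $2^{p-1}\upa+\upa$ are bounded by $2\lipa+2^{p-1}\upa+\upa$, combining the two cases yields \eqref{alip:p2}.

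I do not expect a genuine obstacle: the argument is an elementary case analysis, and the only thing to get right is the choice of splitting threshold, which is dictated by the structure of the hypotheses — one wants \eqref{hyp:alip} precisely when the two arguments are large compared with their difference (so that the non-increasing power $|\cdot|^{p-2}$ works in our favour), and \eqref{hyp:ag} otherwise (so that the non-decreasing power $|\cdot|^{p-1}$ works in our favour). The only bookkeeping subtlety is the possible vanishing of $\vec{\xi}$ or $\vec{\eta}$, which is harmless because it can occur only in the second case, where \eqref{hyp:ag} rather than \eqref{hyp:alip} is used.
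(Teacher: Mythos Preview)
Your proof is correct and follows essentially the same approach as the paper's own proof: a case split on whether $\min(|\vec{\xi}|,|\vec{\eta}|)$ is at least $|\vec{\xi}-\vec{\eta}|$ (use \eqref{hyp:alip}) or not (route through the origin via \eqref{hyp:ag}). The only cosmetic difference is that the paper introduces a generic threshold $r>0$, derives the two case bounds in terms of $r$, and then specialises $r=|\vec{\xi}-\vec{\eta}|$ at the end, whereas you set the threshold to $|\vec{\xi}-\vec{\eta}|$ from the outset; the resulting constants and the logic are identical.
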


\begin{proof}
  Let $r>0$.
  If $|\vec{\xi}|\ge r$ and $|\vec{\eta}|\ge r$ then, using \eqref{hyp:alip} and $p-2\le 0$, we have
  \begin{equation}\label{tech.proof:1}
    |\bfa(\vec{x},\vec{\xi})-\bfa(\vec{x},\vec{\eta})|\le \lipa
    |\vec{\xi}-\vec{\eta}|(|\vec{\xi}|^{p-2}+|\vec{\eta}|^{p-2})
    \le 2\lipa r^{p-2}|\vec{\xi}-\vec{\eta}|.
  \end{equation}
  Otherwise, assume for example that $|\vec{\eta}|<r$. Then $|\vec{\xi}|\le |\vec{\xi}-\vec{\eta}|+r$
  and thus, owing to \eqref{hyp:ag},
  \begin{align}
    |\bfa(\vec{x},\vec{\xi})-\bfa(\vec{x},\vec{\eta})|\le{}&
    |\bfa(\vec{x},\vec{\xi})-\bfa(\vec{x},\vec{0})|+|\bfa(\vec{x},\vec{0})-\bfa(\vec{x},\vec{\eta})|\nonumber\\
    \le{}&\upa (|\vec{\xi}|^{p-1}+|\vec{\eta}|^{p-1})\nonumber\\
    \le{}& 
    \upa(|\vec{\xi}-\vec{\eta}|+r)^{p-1}+\upa r^{p-1}.
    \label{tech.proof:2}
  \end{align}
  Combining \eqref{tech.proof:1} and \eqref{tech.proof:2} shows that, in either case,
  \[
  |\bfa(\vec{x},\vec{\xi})-\bfa(\vec{x},\vec{\eta})|\le
  2\lipa r^{p-2}|\vec{\xi}-\vec{\eta}|+
  \upa(|\vec{\xi}-\vec{\eta}|+r)^{p-1}+\upa r^{p-1}.
  \]
  Taking $r=|\vec{\xi}-\vec{\eta}|$ concludes the proof of \eqref{alip:p2}.
\end{proof}

\begin{lemma}\label{lem:tech.2}
  Under Assumption \eqref{hyp:am} we have, for a.e. $\vec{x}\in\Omega$ and all $(\vec{\xi},\vec{\eta})\in\Real^d\times\Real^d$,
  \begin{asparaitem}
  \item If $p<2$,
    \begin{equation}
      \label{amon:p2m}
      |\vec{\xi}-\vec{\eta}|^p\le \mona^{-\frac{p}{2}}2^{(p-1)\frac{2-p}{2}}\Big(
      [\bfa(\vec{x},\vec{\xi})-\bfa(\vec{x},\vec{\eta})]\cdot[\vec{\xi}-\vec{\eta}]\Big)^{\frac{p}{2}}
      \Big(|\vec{\xi}|^p+|\vec{\eta}|^p\Big)^{\frac{2-p}{2}};
    \end{equation}
  \item If $p\ge 2$,
    \begin{equation}
      \label{amon:p2p}
      |\vec{\xi}-\vec{\eta}|^p\le \mona^{-1}
      [\bfa(\vec{x},\vec{\xi})-\bfa(\vec{x},\vec{\eta})]\cdot[\vec{\xi}-\vec{\eta}].
    \end{equation}
  \end{asparaitem}
\end{lemma}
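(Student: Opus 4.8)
The plan is to derive both inequalities directly from the uniform monotonicity assumption \eqref{hyp:am}, treating the two ranges of $p$ separately by elementary manipulations only.

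For $p\ge 2$, I would start from \eqref{hyp:am} and observe that, since $|\vec{\xi}-\vec{\eta}|\le|\vec{\xi}|+|\vec{\eta}|$ and the exponent $p-2$ is nonnegative, $(|\vec{\xi}|+|\vec{\eta}|)^{p-2}\ge|\vec{\xi}-\vec{\eta}|^{p-2}$. Plugging this into \eqref{hyp:am} gives $[\bfa(\vec{x},\vec{\xi})-\bfa(\vec{x},\vec{\eta})]\cdot[\vec{\xi}-\vec{\eta}]\ge\mona|\vec{\xi}-\vec{\eta}|^2|\vec{\xi}-\vec{\eta}|^{p-2}=\mona|\vec{\xi}-\vec{\eta}|^p$, which is exactly \eqref{amon:p2p}. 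The degenerate case $\vec{\xi}=\vec{\eta}$ is harmless since both sides vanish, and note $[\bfa(\vec{x},\vec{\xi})-\bfa(\vec{x},\vec{\eta})]\cdot[\vec{\xi}-\vec{\eta}]\ge 0$ by \eqref{hyp:am}.

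For $p<2$ the argument is slightly longer. The inequality is trivial when $\vec{\xi}=\vec{\eta}$, so I assume $\vec{\xi}\neq\vec{\eta}$, whence $|\vec{\xi}|+|\vec{\eta}|\ge|\vec{\xi}-\vec{\eta}|>0$. Rewriting \eqref{hyp:am} as $|\vec{\xi}-\vec{\eta}|^2\le\mona^{-1}\big([\bfa(\vec{x},\vec{\xi})-\bfa(\vec{x},\vec{\eta})]\cdot[\vec{\xi}-\vec{\eta}]\big)(|\vec{\xi}|+|\vec{\eta}|)^{2-p}$ (legitimate since $2-p>0$ and the base is positive) and raising to the power $p/2>0$ yields $|\vec{\xi}-\vec{\eta}|^p\le\mona^{-p/2}\big([\bfa(\vec{x},\vec{\xi})-\bfa(\vec{x},\vec{\eta})]\cdot[\vec{\xi}-\vec{\eta}]\big)^{p/2}(|\vec{\xi}|+|\vec{\eta}|)^{\frac{(2-p)p}{2}}$. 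It then remains to bound $(|\vec{\xi}|+|\vec{\eta}|)^{\frac{(2-p)p}{2}}$: I would invoke the convexity inequality $(a+b)^p\le 2^{p-1}(a^p+b^p)$, valid since $p>1$ (cf. \eqref{eq:bnd.(a+b)^p}), with $a=|\vec{\xi}|$ and $b=|\vec{\eta}|$, and raise it to the power $(2-p)/2>0$ to get $(|\vec{\xi}|+|\vec{\eta}|)^{\frac{(2-p)p}{2}}\le 2^{(p-1)\frac{2-p}{2}}(|\vec{\xi}|^p+|\vec{\eta}|^p)^{\frac{2-p}{2}}$. Combining the last two displays gives \eqref{amon:p2m}.

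There is no genuine obstacle: both parts are short chains of elementary inequalities. The only points needing a little care are the treatment of the degenerate case $\vec{\xi}=\vec{\eta}$ (so that negative powers of $|\vec{\xi}|+|\vec{\eta}|$ are never formed when that quantity could vanish) and bookkeeping of the exponents, in particular checking that $2-p$ and $\tfrac{(2-p)p}{2}$ are positive when $p<2$ so that the monotonicity of $t\mapsto t^\alpha$ is used in the correct direction.
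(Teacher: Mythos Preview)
Your proof is correct and follows essentially the same route as the paper: for $p\ge 2$ you combine \eqref{hyp:am} with $|\vec{\xi}-\vec{\eta}|^{p-2}\le(|\vec{\xi}|+|\vec{\eta}|)^{p-2}$, and for $p<2$ you raise \eqref{hyp:am} to the power $p/2$ and apply the convexity bound $(a+b)^p\le 2^{p-1}(a^p+b^p)$. The only difference is that you are more explicit about the degenerate case $\vec{\xi}=\vec{\eta}$ and the signs of the exponents, which the paper leaves implicit.
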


\begin{proof}
  Estimate \eqref{amon:p2m} is obtained by raising \eqref{hyp:am} to the power $p/2$ and using $(|\vec{\xi}|+|\vec{\eta}|)^p
  \le 2^{p-1}(|\vec{\xi}|^p+|\vec{\eta}|^p)$.
  To prove \eqref{amon:p2p}, we simply write $|\vec{\xi}-\vec{\eta}|^p\le |\vec{\xi}-\vec{\eta}|^2
  (|\vec{\xi}|+|\vec{\eta}|)^{p-2}$. \end{proof}

\begin{remark} The (real-valued) mapping $a:t\mapsto |t|^{p-2}t$ corresponds to the $p$-Laplace operator
  in dimension 1, and it therefore satisfies \eqref{hyp:am}. Hence, by Lemma \ref{lem:tech.2},
  \begin{align}\label{eq:mon.sT.1}
    &\text{If $p< 2$:}\quad |t-r|^p\le C\left( \left[|t|^{p-2}t-|r|^{p-2}r\right] [t-r] \right)^{\frac{p}{2}}
    \left(|t|^p+|r|^p\right)^{\frac{2-p}{p}},\\
    \label{eq:mon.sT.2}
    &\text{If $p\ge 2$:}\quad |t-r|^p\le C\left[|t|^{p-2}t-|r|^{p-2}r\right][t-r],
  \end{align}
  where $C$ depends only on $p$.
\end{remark}

%------------------------------------------------------------------------------%

\section*{Acknowledgment}
This work was partially supported by Agence Nationale de la Recherche project HHOMM (ANR-15-CE40-0005) and by the Australian Research Council's Discovery Projects funding scheme (project number DP170100605).

%------------------------------------------------------------------------------%

\begin{footnotesize}
  \bibliographystyle{plain}
  \bibliography{eproj}
\end{footnotesize}

\end{document}